\providecommand{\U}[1]{\protect\rule{.1in}{.1in}}
\newtheorem{theorem}{Theorem}
\newtheorem{corollary}[theorem]{Corollary}
\newtheorem{definition}[theorem]{Definition}
\newtheorem{lemma}[theorem]{Lemma}
\newtheorem{notation}[theorem]{Notation}
\newtheorem{proposition}[theorem]{Proposition}
\newtheorem{remark}[theorem]{Remark}
\newenvironment{proof}[1][Proof]{\textbf{#1.} }{\ \rule{0.5em}{0.5em}}
\begin{document}

\title{The Baker-Campbell-Hausdorff Formula \\and\\the Zassenhaus Formula\\in \\Synthetic DifferentialGeometry}
\author{Hirokazu NISHIMURA\\Institute of Mathematics\\University of Tsukuba\\Tsukuba, Ibaraki, 305-8571, JAPAN}
\maketitle

\begin{abstract}
After the torch of Anders Kock [Taylor series calculus for ring objects of
line type, Journal of Pure and Applied Algebra, 12 (1978), 271-293], we will
establish the Baker-Campbell-Hausdorff formula as well as the Zassenhaus
formula in the theory of Lie groups.

\end{abstract}

\section{Introduction}

The Baker-Campbell-Hausdorff formula (the BCH formula for short) was first
discovered by Campbell (\cite{ca1} and \cite{ca2}) on the closing days of the
19th century so as to construct a Lie group directly from a given Lie algebra
(i.e., Lie's third fundamental theorem !). However, his investigation failed
in convergence problems, let alone dealing only with matrix Lie algebras. The
BCH formula was finally established by Baker \cite{ba} and Hausdorff
\cite{hau} independently within a somewhat more abstract framework of formal
power series on the dawning days of the 20th century, getting rid of
convergence problems completely while losing touch with the theory of Lie
groups. The BCH formula resurrected its touch with the theory of Lie groups
thanks to Magnus \cite{ma0} in the middle of the 20th century.

The BCH formula claims, roughly speaking, that the multiplication in a Lie
group is already encoded in its Lie algebra. More precisely, the
multiplication in a Lie group is expressible in terms of Lie brackets in its
Lie algebra, which readily gives rise to Lie's second fundamental theorem in
the theory of finite-dimensional Lie groups, though the modern treatment of
the theory of finite-dimensional Lie groups is liable to base Lie's second
fundamental theorem somewhat opaquely upon the Frobenius theorem.

The so-called Taylor formula was introduced by the English mathematician
called Brook Taylor in the early 18th century, though its pedigree can be
traced back even to Zeno in ancient Greece. Kock \cite{ko} has shown that the
nature of the Taylor formula in differential calculus is more combinatorial or
algebraic than analytical, dodging convergence problems completely, as far as
we are admitted to speak on the infinitesimal level, where nilpotent
infinitesimals are available in plenty. The principal objective in this paper
is to do the same thing to the BCH formula and its inverse companion called
the Zassenhaus formula in the theory of Lie groups, though we must confront
the \textit{noncommutative} world in sharp contrast to the Taylor formula
living a \textit{commutative} life. We have found out that the Zassenhaus
formula is much easier to deal with than the BCH formula itself, albeit,
historically speaking, the former having been found out by Zassenhaus
\cite{za} within an abstract framework of formal power series more than three
decades later than the latter and its continuous counterpart having been
established by Fer \cite{fe} four years later than \cite{ma0}. Quirky enough,
our BCH formula diverges from the usual one in the $4$-th order. The BCH
formula will be dealt with in \S \ref{s7} and \S \ref{s8} by two different
methods, while we will be engaged upon the Zassenhaus formula in \S \ref{s6}.
We approach the BCH formula in anticipation of its validity in \S \ref{s7} by
using only the left logarithmic derivative of the exponential mapping, while
we will do so from scratch in \S \ref{s8} by using both of the left and right
logarithmic derivatives of the exponential mapping. As is expected, the latter
proofs are longer than the former ones.

We will work within the framework of synthetic differential geometry as in
\cite{la}. We assume the reader to be familiar with Chapters 1-3 of \cite{la}.
Now we fix our terminology and notation. Given a microlinear space $M$, we
denote $M^{D}$\ by $\mathbf{T}M$, while we denote the tangent space of $M$\ at
$x\in M$ by $\mathbf{T}_{x}M=\left\{  \gamma\in\mathbf{T}M\mid\gamma\left(
0\right)  =x\right\}  $. Given a mapping $f:M\rightarrow N$ of microlinear
spaces, its differential is denoted by $\mathbf{d}f$, which is a mapping from
$\mathbf{T}M$ to $\mathbf{T}N$, assigning $f\circ\gamma\in\mathbf{T}N$ to each
$\gamma\in\mathbf{T}M$. We denote the identity mapping of $M$\ by
$\mathrm{id}_{M}$. The unit element of a group $G$\ is usually denoted by $e$.
In the proof of a theorem or the like, we insert some comment surrounded with
parentheses $\left)  {}\right(  $.

\section{\label{s2}The Lie Algebra of a Lie Group}

\begin{definition}
A Lie group is a group which is microlinear as a space.
\end{definition}

\begin{notation}
Given a Lie group $G$, its tangent space $\mathbf{T}_{e}G$\ at $e$\ is usually
denoted by its corresponding German letter $\mathfrak{g}$.
\end{notation}

From now on, $G$\ will always be assumed to be a Lie group with $\mathfrak{g}%
=\mathbf{T}_{e}G$.

\begin{proposition}
\label{t2.1}Given $X\in\mathfrak{g}$\ and $\left(  d_{1},d_{2}\right)  \in
D(2)$, we have
\[
X_{d_{1}+d_{2}}=X_{d_{1}}.X_{d_{2}}%
\]

\end{proposition}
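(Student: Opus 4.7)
The plan is to view both sides of the claimed equality as morphisms $D(2)\to G$ and then invoke the microlinearity of $G$ to conclude that they must coincide. Concretely, define
\[
\alpha,\beta:D(2)\longrightarrow G,\qquad \alpha(d_{1},d_{2})=X_{d_{1}+d_{2}},\qquad \beta(d_{1},d_{2})=X_{d_{1}}\cdot X_{d_{2}}.
\]
The goal is to show $\alpha=\beta$.

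First I would record the behavior of $\alpha$ and $\beta$ on the two coordinate axes of $D(2)$, i.e.\ on the subobjects $\{(d,0)\mid d\in D\}$ and $\{(0,d)\mid d\in D\}$. For $\alpha$ this is immediate: $\alpha(d,0)=X_{d}=\alpha(0,d)$. For $\beta$ I would use that $X$ is a tangent vector at $e$, so $X_{0}=e$; hence $\beta(d,0)=X_{d}\cdot e=X_{d}$ and $\beta(0,d)=e\cdot X_{d}=X_{d}$. Thus the two maps $\alpha$ and $\beta$ have identical restrictions to each axis, both equal to $X:D\to G$.

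Next I would invoke microlinearity of $G$. Recall that $D(2)$ is the quasi-colimit in the category of formal infinitesimal spaces of the diagram $D\xleftarrow{\;0\;}1\xrightarrow{\;0\;}D$ (this is one of the basic quasi-colimit diagrams used to define microlinearity in \cite{la}, Chapter 3). Since $G$ is microlinear, the canonical map
\[
G^{D(2)}\longrightarrow G^{D}\times_{G}G^{D}
\]
is a bijection. The two maps $\alpha,\beta$ land in $G^{D(2)}$ and, by the previous paragraph, have the same image in $G^{D}\times_{G}G^{D}$. Hence $\alpha=\beta$, which is precisely the asserted identity $X_{d_{1}+d_{2}}=X_{d_{1}}\cdot X_{d_{2}}$.

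The only subtle point, and the main thing to get right, is the invocation of microlinearity: one must observe that the multiplication $\mu:G\times G\to G$ composed with $(X\times X)$ produces a genuinely well-defined map out of $D(2)$ (not merely out of $D\times D$), and that this is exactly the ingredient needed to glue along $1\hookrightarrow D$. Once this is unwound, the proof reduces to the axis computation above, so I expect no further obstacles.
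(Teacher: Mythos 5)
Your proof is correct and is essentially the paper's own argument: the paper simply defers to Proposition 3, \S 3.2 of \cite{la}, whose proof is exactly this microlinearity argument, comparing the two maps $D(2)\to G$ via their common restrictions to the axes and using that a microlinear $G$ perceives $D(2)$ as the quasi-colimit of $D\xleftarrow{0}1\xrightarrow{0}D$. No gaps; the uniqueness part of that perceived colimit is precisely what you invoke.
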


\begin{proof}
By the same token as in Proposition 3, \S 3.2 of \cite{la}.
\end{proof}

\begin{corollary}%
\[
X_{-d}=\left(  X_{d}\right)  ^{-1}%
\]

\end{corollary}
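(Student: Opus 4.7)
The plan is to obtain the identity as a direct consequence of Proposition \ref{t2.1}. Concretely, I would set $d_{1}=d$ and $d_{2}=-d$ and exploit the fact that tangent vectors at $e$ send $0$ to $e$.

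First I would verify that the pair $(d,-d)$ lies in $D(2)$: since $d\in D$ satisfies $d^{2}=0$, we have $d\cdot(-d)=-d^{2}=0$, so $(d,-d)\in D(2)$ and Proposition \ref{t2.1} applies. Substituting yields
\[
X_{d+(-d)}=X_{d}\cdot X_{-d},
\]
that is, $X_{0}=X_{d}\cdot X_{-d}$.

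Next I would invoke the defining property of $X\in\mathfrak{g}=\mathbf{T}_{e}G$, namely $X(0)=e$, so that the left-hand side collapses to the unit element. This gives $e=X_{d}\cdot X_{-d}$, and multiplying on the left by $(X_{d})^{-1}$ produces the desired equality $X_{-d}=(X_{d})^{-1}$.

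There is no genuine obstacle here; the only point requiring care is the verification that $(d,-d)\in D(2)$, which is why Proposition \ref{t2.1} is legitimately applicable in this case. Everything else is a one-line manipulation using that $X$ takes $0$ to $e$.
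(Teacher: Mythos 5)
Your proof is correct and follows essentially the same route as the paper: both apply Proposition \ref{t2.1} to the pair $(d,-d)\in D(2)$ and use $X_{0}=e$ to conclude $e=X_{d}.X_{-d}$. Your explicit check that $d\cdot(-d)=-d^{2}=0$ is a small elaboration of the paper's "evidently," but the argument is the same.
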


\begin{proof}
Evidently
\[
\left(  d,-d\right)  \in D(2)
\]
obtains, so that we get
\[
e=X_{d+\left(  -d\right)  }=X_{d}.X_{-d}=X_{-d}.X_{d}%
\]
by the above proposition.
\end{proof}

\begin{proposition}
\label{t2.2}Given $X,Y\in\mathfrak{g}$\ and $d\in D$, we have
\[
\left(  X+Y\right)  _{d}=X_{d}.Y_{d}=Y_{d}.X_{d}%
\]

\end{proposition}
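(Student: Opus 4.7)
The plan is to exploit the microlinearity of $G$, which ensures that any map $D\vee D\to G$ extends uniquely to a map $D(2)\to G$. By the standard construction of tangent-vector addition in synthetic differential geometry (as in Chapter 3 of \cite{la}), the sum $X+Y$ is characterised by $(X+Y)_{d}=\tilde{\ell}(d,d)$, where $\tilde{\ell}:D(2)\to G$ is the unique microlinear extension of the axis data $(d,0)\mapsto X_{d}$ and $(0,d)\mapsto Y_{d}$, and the diagonal $D\to D(2)$, $d\mapsto(d,d)$, makes sense because every $d\in D$ satisfies $d\cdot d=0$.

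First I would introduce the map $\ell:D(2)\to G$ given by the explicit formula $\ell(d_{1},d_{2})=X_{d_{1}}\cdot Y_{d_{2}}$, which is manifestly defined on all of $D\times D$ and hence on $D(2)$. Using $X_{0}=Y_{0}=e$, it is immediate that $\ell(d_{1},0)=X_{d_{1}}$ and $\ell(0,d_{2})=Y_{d_{2}}$, so by uniqueness of the microlinear extension $\ell$ coincides with $\tilde{\ell}$. Evaluating at the diagonal yields $(X+Y)_{d}=\ell(d,d)=X_{d}\cdot Y_{d}$, which gives the first equality.

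For the commutativity $X_{d}\cdot Y_{d}=Y_{d}\cdot X_{d}$, I would run the same argument with the swapped map $\ell^{\prime}:D(2)\to G$ defined by $\ell^{\prime}(d_{1},d_{2})=Y_{d_{2}}\cdot X_{d_{1}}$. Its axis restrictions are again $X_{d_{1}}$ on the first axis and $Y_{d_{2}}$ on the second, since left- and right-multiplication by $e$ are both trivial, so $\ell^{\prime}=\tilde{\ell}=\ell$ by the same uniqueness principle. Restricting to the diagonal then gives $Y_{d}\cdot X_{d}=\ell^{\prime}(d,d)=\ell(d,d)=X_{d}\cdot Y_{d}$.

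I do not anticipate any genuine obstacle: the whole argument boils down to two applications of the uniqueness clause in the microlinearity axiom, each applied to a pair of maps that patently agree on $D\vee D$. The only conceptual subtlety is the recognition that the noncommutative group multiplication of $G$ is forced to become commutative once restricted to first-order infinitesimals at $e$, which is precisely the content of what we wish to establish.
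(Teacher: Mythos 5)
Your argument is correct and is essentially the proof the paper points to: the paper simply defers to Proposition 6, \S 3.2 of \cite{la}, whose proof is exactly this use of microlinearity — the map $(d_{1},d_{2})\mapsto X_{d_{1}}.Y_{d_{2}}$ (and its swapped companion) agrees with the canonical extension on the axes, so uniqueness and restriction to the diagonal give $(X+Y)_{d}=X_{d}.Y_{d}=Y_{d}.X_{d}$. No gaps; your write-up just makes explicit what the citation leaves implicit.
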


\begin{proof}
By the same token as in Proposition 6, \S 3.2 of \cite{la}.
\end{proof}

\begin{theorem}
\label{t2.3}Given $X,Y\in\mathfrak{g}$, there exists a unique $Z\in
\mathfrak{g}$ with
\[
X_{d_{1}}.Y_{d_{2}}.X_{-d_{1}}.Y_{-d_{2}}=Z_{d_{1}d_{2}}%
\]
for any $d_{1},d_{2}\in D$.
\end{theorem}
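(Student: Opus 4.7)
My plan is to reduce the statement to a standard microlinearity fact: a map $\varphi : D \times D \to M$ into a microlinear space $M$ that is constant $= m_{0}$ on each of the two coordinate axes factors uniquely through the multiplication $(d_{1}, d_{2}) \mapsto d_{1} d_{2}$ as a map $\psi : D \to M$ with $\psi(0) = m_{0}$ and $\psi(d_{1} d_{2}) = \varphi(d_{1}, d_{2})$. This is essentially the quasi-colimit principle used in \cite{la} to construct the Lie bracket of two vector fields.

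First I will define $\varphi : D \times D \to G$ by
\[
\varphi(d_{1}, d_{2}) = X_{d_{1}}.Y_{d_{2}}.X_{-d_{1}}.Y_{-d_{2}},
\]
and verify the boundary conditions $\varphi(d_{1}, 0) = \varphi(0, d_{2}) = e$. For $d_{1} = 0$ we use $X_{0} = e$ and the preceding corollary to get $\varphi(0, d_{2}) = Y_{d_{2}}.Y_{-d_{2}} = e$; the case $d_{2} = 0$ is symmetric via $Y_{0} = e$.

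Since $G$ is microlinear by the very definition of a Lie group, the factorization principle delivers a unique $Z : D \to G$ satisfying $Z(0) = e$ and $Z_{d_{1} d_{2}} = \varphi(d_{1}, d_{2})$ for all $(d_{1}, d_{2}) \in D \times D$. The constraint $Z(0) = e$ is exactly the condition $Z \in \mathfrak{g}$, and uniqueness of $Z$ in $\mathfrak{g}$ is inherited from the uniqueness clause of the factorization principle.

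The main obstacle is simply locating the correct form of this microlinearity principle in \cite{la}; once invoked, the verification is a couple of lines and involves no combinatorial bookkeeping. I would expect this $Z$ to be precisely the Lie bracket $[X, Y]$ in $\mathfrak{g}$, and the theorem to serve as its definition in the sections to come.
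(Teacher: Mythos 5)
Your proof is correct and is essentially the paper's own argument: the paper simply defers to pp.\ 71--72 of \cite{la}, where exactly this quasi-colimit factorization principle (a map on $D\times D$ constant on the two axes factors uniquely through $(d_{1},d_{2})\mapsto d_{1}d_{2}$) is used to construct the Lie bracket. Your verification of the boundary conditions and the appeal to microlinearity of $G$ match that argument step for step.
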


\begin{proof}
By the same token as in pp.71-72 of \cite{la}.
\end{proof}

\begin{definition}
We denote $Z$\ in the above theorem by $\left[  X,Y\right]  $, so that we have
a function
\[
\left[  \cdot,\cdot\right]  :\mathfrak{g}\times\mathfrak{g}\rightarrow
\mathfrak{g}%
\]
called the Lie bracket.
\end{definition}

\begin{theorem}
\label{t2.4}The $\mathbb{R}$-module $\mathfrak{g}$\ endowed with the Lie
bracket $\left[  \cdot,\cdot\right]  :\mathfrak{g}\times\mathfrak{g}%
\rightarrow\mathfrak{g}$\ is a Lie algebra.
\end{theorem}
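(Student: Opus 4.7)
The plan is to verify the three defining axioms of a Lie algebra --- $\mathbb{R}$-bilinearity, antisymmetry, and the Jacobi identity --- in each case by exploiting the uniqueness clause of Theorem \ref{t2.3}: once I produce any $W\in\mathfrak{g}$ whose value $W_{d_1d_2}$ matches a specified commutator expression in $G$ for all $(d_1,d_2)\in D\times D$, then $W$ is forced to coincide with the bracket identified there.

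For antisymmetry, the key observation is that inverting the defining equality
\[
X_{d_1}.Y_{d_2}.X_{-d_1}.Y_{-d_2}=\left[X,Y\right]_{d_1d_2}
\]
yields on the right-hand side $\left[X,Y\right]_{-d_1d_2}$ by the Corollary to Proposition \ref{t2.1}, while the left-hand side becomes $Y_{d_2}.X_{d_1}.Y_{-d_2}.X_{-d_1}$, which is precisely $\left[Y,X\right]_{d_2d_1}$. Substituting $d_1\mapsto -d_1$ and reapplying the Corollary gives $\left[Y,X\right]=-\left[X,Y\right]$.

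For $\mathbb{R}$-bilinearity I argue only in the first slot (the second is then forced by antisymmetry). Homogeneity $\left[\alpha X,Y\right]=\alpha\left[X,Y\right]$ is immediate from the underlying definition $(\alpha X)_{d}=X_{\alpha d}$. For additivity I expand
\[
(X_1+X_2)_{d_1}.Y_{d_2}.(X_1+X_2)_{-d_1}.Y_{-d_2}
\]
using Proposition \ref{t2.2}, which supplies both the factorizations $(X_1+X_2)_{\pm d_1}=(X_1)_{\pm d_1}.(X_2)_{\pm d_1}$ and the crucial commutativity $(X_1)_{d_1}.(X_2)_{d_1}=(X_2)_{d_1}.(X_1)_{d_1}$. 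Inserting an inverse pair $Y_{-d_2}.Y_{d_2}$ in the middle so as to isolate each single commutator $\left[X_i,Y\right]_{d_1d_2}$, the resulting product matches $\left[X_1,Y\right]_{d_1d_2}.\left[X_2,Y\right]_{d_1d_2}$, which by Proposition \ref{t2.1} equals $(\left[X_1,Y\right]+\left[X_2,Y\right])_{d_1d_2}$; uniqueness closes the case.

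The Jacobi identity is the principal obstacle. I would introduce a third infinitesimal $d_3\in D$ and compute the $G$-valued expression
\[
\left[X,\left[Y,Z\right]\right]_{d_1d_2d_3}=X_{d_1}.\left[Y,Z\right]_{d_2d_3}.X_{-d_1}.\left[Y,Z\right]_{-d_2d_3},
\]
expand the inner bracket via its definition, and do the same for the two cyclic permutations. The idea is then to invoke a Hall--Witt style purely group-theoretic rearrangement to reduce the product of the three triple commutators to $e$ in $G$, and finally to appeal to microlinearity in the three-infinitesimal variable $(d_1,d_2,d_3)$ to pass from this relation in $G$ to the corresponding equality in $\mathfrak{g}$. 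I expect the combinatorial rearrangement of the long words coming from $\left[X,\left[Y,Z\right]\right]$ and its cyclic partners, together with the bookkeeping needed to justify the extraction of a $\mathfrak{g}$-valued identity from a $G$-valued one, to be the most delicate part of the argument.
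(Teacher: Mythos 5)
The paper itself gives no computation for this theorem---it simply points to Proposition 7, \S 3.2 of Lavendhomme---and your outline is essentially that standard argument, so the antisymmetry and bilinearity parts are on the right track. The genuine gap is in the Jacobi identity, which is precisely the part you leave as a plan rather than an argument, and as stated the plan would fail. Writing $a=X_{d_{1}}$, $b=Y_{d_{2}}$, $c=Z_{d_{3}}$ and $\left[  u,v\right]  =uvu^{-1}v^{-1}$ for the group commutator, your three factors are $\left[  a,\left[  b,c\right]  \right]$, $\left[  b,\left[  c,a\right]  \right]$, $\left[  c,\left[  a,b\right]  \right]$, and their product is \emph{not} the unit element by any purely group-theoretic rearrangement: the Hall--Witt identity only annihilates a product of \emph{conjugated} commutators, such as $\left[  \left[  a,b^{-1}\right]  ,c\right]  ^{b}.\left[  \left[  b,c^{-1}\right]  ,a\right]  ^{c}.\left[  \left[  c,a^{-1}\right]  ,b\right]  ^{a}=e$. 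What makes the plan work in this setting is an infinitesimal ingredient you never state: any commutator in which one of the $d_{i}$ occurs twice is $e$, since by Theorem \ref{t2.3} it is of the form $W_{d_{i}^{2}d_{j}}=W_{0}=e$ for a suitable $W\in\mathfrak{g}$; this is what lets you discard the conjugations and all other rearrangement discrepancies at order $d_{1}d_{2}d_{3}$. The same fact is also used tacitly in your bilinearity step, where ``inserting an inverse pair $Y_{-d_{2}}.Y_{d_{2}}$'' only produces $\left[  X_{1},Y\right]  _{d_{1}d_{2}}.\left[  X_{2},Y\right]  _{d_{1}d_{2}}$ after you commute $\left[  X_{2},Y\right]  _{d_{1}d_{2}}$ past $\left(  X_{1}\right)  _{-d_{1}}$ and $Y_{\pm d_{2}}$, legitimate precisely because the resulting commutators carry a factor $d_{1}^{2}$ or $d_{2}^{2}$.

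A second, smaller gap: the final passage from the $G$-valued identity to the $\mathfrak{g}$-valued one is not covered by the uniqueness clause of Theorem \ref{t2.3}, which is a two-infinitesimal statement. After Proposition \ref{t2.2} rewrites the product of your three factors as $\left(  \left[  X,\left[  Y,Z\right]  \right]  +\left[  Y,\left[  Z,X\right]  \right]  +\left[  Z,\left[  X,Y\right]  \right]  \right)  _{d_{1}d_{2}d_{3}}$, you still need the three-variable cancellation principle: a map on $D\times D\times D$ that is $e$ whenever one coordinate vanishes factors uniquely through $\left(  d_{1},d_{2},d_{3}\right)  \mapsto d_{1}d_{2}d_{3}$, so that $W_{d_{1}d_{2}d_{3}}=e$ for all $d_{1},d_{2},d_{3}\in D$ forces $W=0$. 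That is an additional microlinearity lemma (the three-fold analogue of the uniqueness in Theorem \ref{t2.3}) and has to be invoked explicitly; ``appeal to microlinearity'' gestures at it but does not supply it.
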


\begin{proof}
By the same token as in Proposition 7 (\S 3.2) of \cite{la}.
\end{proof}

\begin{proposition}
Given a homomorphism
\[
\varphi:G\rightarrow H
\]
of Lie groups, the mapping
\[
\varphi^{\prime}:\mathfrak{g}\rightarrow\mathfrak{h}%
\]
obtained as the restriction of the differential
\[
\mathbf{d}\varphi:\mathbf{T}G\rightarrow\mathbf{T}H
\]
to $\mathfrak{g}=\mathbf{T}_{e}G$ is a homomorphism of Lie algebras.
\end{proposition}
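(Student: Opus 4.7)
The plan is to verify the two defining properties of a Lie algebra homomorphism separately: $\mathbb{R}$-linearity of $\varphi^{\prime}$ and compatibility with the Lie bracket. In both cases the strategy is the same: evaluate the tangent vectors in question on an element $d$ (or a pair $\left(d_{1},d_{2}\right)$) of $D$, apply the group homomorphism property $\varphi\left(ab\right)=\varphi\left(a\right)\varphi\left(b\right)$ and $\varphi\left(e\right)=e$ to move $\varphi$ past the group operations, and then peel off the quantifier over $d$. Note first that $\varphi^{\prime}$ does land in $\mathfrak{h}$, since $\varphi\left(e\right)=e$ forces $\varphi\circ X$ to sit at the identity of $H$ when $X\in\mathfrak{g}$ does so at the identity of $G$. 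By the very definition of $\mathbf{d}\varphi$, we have $\varphi^{\prime}\left(X\right)_{d}=\varphi\left(X_{d}\right)$ for every $X\in\mathfrak{g}$ and every $d\in D$.

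For additivity, I would take $X,Y\in\mathfrak{g}$ and $d\in D$ and compute
\[
\varphi^{\prime}\left(X+Y\right)_{d}=\varphi\left(\left(X+Y\right)_{d}\right)=\varphi\left(X_{d}.Y_{d}\right)=\varphi\left(X_{d}\right).\varphi\left(Y_{d}\right)=\varphi^{\prime}\left(X\right)_{d}.\varphi^{\prime}\left(Y\right)_{d}=\left(\varphi^{\prime}\left(X\right)+\varphi^{\prime}\left(Y\right)\right)_{d},
\]
invoking Proposition \ref{t2.2} in the second and last equalities and the homomorphism property of $\varphi$ in the third. Since this holds for all $d\in D$, the two tangent vectors $\varphi^{\prime}\left(X+Y\right)$ and $\varphi^{\prime}\left(X\right)+\varphi^{\prime}\left(Y\right)$ agree. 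For scalar multiplication by $r\in\mathbb{R}$, the same recipe works starting from $\left(rX\right)_{d}=X_{rd}$.

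For the bracket, I would fix $X,Y\in\mathfrak{g}$ and $\left(d_{1},d_{2}\right)\in D\times D$ and, using Theorem \ref{t2.3}, compute
\[
\varphi^{\prime}\left(\left[X,Y\right]\right)_{d_{1}d_{2}}=\varphi\left(\left[X,Y\right]_{d_{1}d_{2}}\right)=\varphi\left(X_{d_{1}}.Y_{d_{2}}.X_{-d_{1}}.Y_{-d_{2}}\right).
\]
Distributing $\varphi$ across the product and rewriting each factor as $\varphi^{\prime}\left(X\right)_{\pm d_{1}}$ or $\varphi^{\prime}\left(Y\right)_{\pm d_{2}}$, the right-hand side becomes
\[
\varphi^{\prime}\left(X\right)_{d_{1}}.\varphi^{\prime}\left(Y\right)_{d_{2}}.\varphi^{\prime}\left(X\right)_{-d_{1}}.\varphi^{\prime}\left(Y\right)_{-d_{2}},
\]
which by the uniqueness clause in Theorem \ref{t2.3} (applied now inside $H$) equals $\left[\varphi^{\prime}\left(X\right),\varphi^{\prime}\left(Y\right)\right]_{d_{1}d_{2}}$. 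Hence $\varphi^{\prime}\left(\left[X,Y\right]\right)$ and $\left[\varphi^{\prime}\left(X\right),\varphi^{\prime}\left(Y\right)\right]$ are both witnesses of the identity characterizing the bracket of $\varphi^{\prime}\left(X\right)$ and $\varphi^{\prime}\left(Y\right)$, and uniqueness finishes the argument.

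There is no real obstacle: the only subtle point is that, in passing from an equality valid for all $d\in D$ (resp. all $\left(d_{1},d_{2}\right)\in D\times D$) to an equality of tangent vectors, one is tacitly using the Kock-Lawvere axiom, which is built into the microlinearity of $G$ and $H$; the uniqueness assertion in Theorem \ref{t2.3} plays exactly this role in the bracket step. Once this is acknowledged, the proof is purely formal.
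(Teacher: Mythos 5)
Your proof is correct and follows essentially the same route as the paper: the paper's own argument is exactly your bracket computation, evaluating $\varphi^{\prime}\left(\left[X,Y\right]\right)$ at $d_{1}d_{2}$, pushing $\varphi$ through the commutator $X_{d_{1}}.Y_{d_{2}}.X_{-d_{1}}.Y_{-d_{2}}$, and appealing (implicitly) to the uniqueness clause of Theorem \ref{t2.3}. Your additional verification of $\mathbb{R}$-linearity via Proposition \ref{t2.2} and $\left(rX\right)_{d}=X_{rd}$ is a harmless supplement that the paper leaves tacit, since $\varphi^{\prime}$ is a restriction of the differential $\mathbf{d}\varphi$.
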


\begin{proof}
Given $X,Y\in\mathfrak{g}$ and $d_{1},d_{2}\in D$, we have
\begin{align*}
&  \left(  \mathbf{d}\varphi\left(  \left[  X,Y\right]  \right)  \right)
_{d_{1}d_{2}}\\
&  =\varphi\left(  \left[  X,Y\right]  _{d_{1}d_{2}}\right) \\
&  =\varphi\left(  X_{d_{1}}.Y_{d_{2}}.X_{-d_{1}}.Y_{-d_{2}}\right) \\
&  =\varphi\left(  X_{d_{1}}\right)  .\varphi\left(  Y_{d_{2}}\right)
.\varphi\left(  X_{-d_{1}}\right)  .\varphi\left(  Y_{-d_{2}}\right) \\
&  =\left(  \mathbf{d}\varphi\left(  X\right)  \right)  _{d_{1}}.\left(
\mathbf{d}\varphi\left(  Y\right)  \right)  _{d_{2}}.\left(  \mathbf{d}%
\varphi\left(  X\right)  \right)  _{-d_{1}}.\left(  \mathbf{d}\varphi\left(
Y\right)  \right)  _{-d_{2}}\\
&  =\left[  \mathbf{d}\varphi\left(  X\right)  ,\mathbf{d}\varphi\left(
Y\right)  \right]  _{d_{1}d_{2}}%
\end{align*}
so that $\varphi^{\prime}$\ preserves Lie brackets.
\end{proof}

The succeeding simple lemma will be useful in the last section.

\begin{lemma}
\label{t2.5}Given $X,Y\in\mathfrak{g}$, we have
\[
\left[  X,\left[  Y,\left[  X,Y\right]  \right]  \right]  =\left[  Y,\left[
X,\left[  X,Y\right]  \right]  \right]
\]

\end{lemma}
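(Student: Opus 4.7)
The plan is to deduce this identity directly from the Jacobi identity, which is available by Theorem~\ref{t2.4} asserting that $(\mathfrak{g},[\cdot,\cdot])$ is a Lie algebra.

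Concretely, I would set $Z := [X,Y] \in \mathfrak{g}$ and apply the Jacobi identity to the triple $(X,Y,Z)$, which gives
\[
[X,[Y,Z]] + [Y,[Z,X]] + [Z,[X,Y]] = 0.
\]
The last summand is $[[X,Y],[X,Y]]$, which vanishes by the alternating property of the bracket. Rewriting $[Y,[Z,X]] = -[Y,[X,Z]]$ by antisymmetry and substituting $Z=[X,Y]$ back in then yields
\[
[X,[Y,[X,Y]]] = [Y,[X,[X,Y]]],
\]
which is exactly the claim.

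There is no real obstacle here: the identity is a purely formal consequence of the Lie algebra axioms established in Theorem~\ref{t2.4}, and does not require revisiting the infinitesimal definition of the bracket from Theorem~\ref{t2.3}. The only thing to be careful about is the sign bookkeeping when swapping the arguments of $[Y,[Z,X]]$.
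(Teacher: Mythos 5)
Your proof is correct and is essentially the paper's own argument: the paper likewise applies the Jacobi identity to the triple $\left(X,Y,\left[X,Y\right]\right)$, kills the term $\left[\left[X,Y\right],\left[X,Y\right]\right]$ by alternation, and resolves the sign by antisymmetry. No discrepancies to report.
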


\begin{proof}
This follows easily from the following Jacobi identity:
\[
\left[  X,\left[  Y,\left[  X,Y\right]  \right]  \right]  +\left[  Y,\left[
\left[  X,Y\right]  ,X\right]  \right]  +\left[  \left[  X,Y\right]  ,\left[
X,Y\right]  \right]  =0
\]

\end{proof}

\begin{notation}
Given a Euclidean $\mathbb{R}$-module $V$\ which is microlinear as a space,
the totality of bijective homomorphisms of $\mathbb{R}$-modules from $V$\ onto
itself\ is denoted by $GL\left(  V\right)  $, which is a Lie group with
composition of mappings as its group operation (cf. Proposition 5 (\S \S 3.2)
of \cite{la}). Its Lie algebra is usually denoted by $\mathfrak{gl}\left(
V\right)  $.
\end{notation}

\begin{proposition}
\label{t2.6}Given a Euclidean $\mathbb{R}$-module $V$\ which is microlinear as
a space, the Lie algebra $\mathfrak{gl}\left(  V\right)  $\ can naturally be
identified with the Lie algebra of homomorphisms of $\mathbb{R}$-modules from
$V$\ into itself with its Lie bracket
\[
\left[  \varphi,\psi\right]  =\varphi\circ\psi-\psi\circ\varphi
\]
for any homomorphisms $\varphi,\psi$\ of $\mathbb{R}$-modules from $V$\ into itself.
\end{proposition}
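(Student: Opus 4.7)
The plan is to produce an explicit identification between $\mathbf{T}_{e}GL(V)$ and $\mathrm{End}_{\mathbb{R}}(V)$ (the $\mathbb{R}$-module of homomorphisms $V\to V$), and then verify that the Lie bracket defined in Theorem \ref{t2.3} via the group commutator becomes the ordinary commutator of endomorphisms.

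First I would set up the identification. Given $\varphi\in\mathrm{End}_{\mathbb{R}}(V)$, define a candidate tangent vector $X^{\varphi}:D\to GL(V)$ by $X^{\varphi}_{d}=\mathrm{id}_{V}+d\varphi$. For each $d\in D$ this is an $\mathbb{R}$-module homomorphism, and it lies in $GL(V)$ because $d^{2}=0$ yields $(\mathrm{id}_{V}+d\varphi)\circ(\mathrm{id}_{V}-d\varphi)=\mathrm{id}_{V}$, so $(\mathrm{id}_{V}-d\varphi)$ is a two-sided inverse. Clearly $X^{\varphi}_{0}=\mathrm{id}_{V}=e$, so $X^{\varphi}\in\mathfrak{gl}(V)$. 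Conversely, given any $X\in\mathfrak{gl}(V)$, the Kock--Lawvere axiom applied pointwise in $V$ (using that $V$ is Euclidean and microlinear) shows that $X_{d}-\mathrm{id}_{V}=d\varphi$ for a unique $\varphi\in\mathrm{End}_{\mathbb{R}}(V)$. This gives an $\mathbb{R}$-linear bijection $\varphi\leftrightarrow X^{\varphi}$ between $\mathrm{End}_{\mathbb{R}}(V)$ and $\mathfrak{gl}(V)$.

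Next I would compute the group commutator $X^{\varphi}_{d_{1}}\cdot X^{\psi}_{d_{2}}\cdot X^{\varphi}_{-d_{1}}\cdot X^{\psi}_{-d_{2}}$ by composing the four endomorphisms
\[
(\mathrm{id}_{V}+d_{1}\varphi)\circ(\mathrm{id}_{V}+d_{2}\psi)\circ(\mathrm{id}_{V}-d_{1}\varphi)\circ(\mathrm{id}_{V}-d_{2}\psi).
\]
Expanding step by step and discarding every monomial containing $d_{1}^{2}$ or $d_{2}^{2}$ (which vanish because $d_{1},d_{2}\in D$), the middle product collapses to $\mathrm{id}_{V}+d_{2}\psi+d_{1}d_{2}(\varphi\circ\psi-\psi\circ\varphi)$, and a final multiplication by $\mathrm{id}_{V}-d_{2}\psi$ leaves
\[
\mathrm{id}_{V}+d_{1}d_{2}(\varphi\circ\psi-\psi\circ\varphi)=X^{\varphi\circ\psi-\psi\circ\varphi}_{d_{1}d_{2}}.
\]
By the uniqueness clause of Theorem \ref{t2.3}, the Lie bracket on $\mathfrak{gl}(V)$ coming from the Lie group structure satisfies $[X^{\varphi},X^{\psi}]=X^{\varphi\circ\psi-\psi\circ\varphi}$, which under the identification above is precisely $\varphi\circ\psi-\psi\circ\varphi$.

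The argument is essentially a bookkeeping exercise, and I do not expect a serious obstacle. The one point that requires some care is the very first step: one must justify, using microlinearity and the Euclidean structure of $V$, that the passage from a tangent vector $X$ to a single endomorphism $\varphi$ (rather than a family of endomorphisms parametrized by the point of $V$) is legitimate; after that, the commutator computation is forced, and the remaining $\mathbb{R}$-module structure (additivity and scalar multiplication of the correspondence $\varphi\leftrightarrow X^{\varphi}$) can be checked by an equally short infinitesimal expansion using Proposition \ref{t2.2}.
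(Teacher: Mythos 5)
Your proposal is correct and takes essentially the same route as the paper: identify $X\in\mathfrak{gl}(V)$ with the unique $\varphi$ satisfying $X_{d}=\mathrm{id}_{V}+d\varphi$ (Kock--Lawvere applied pointwise, with $\mathrm{id}_{V}\pm d\varphi$ mutually inverse since $d^{2}=0$), then expand the group commutator to get $\mathrm{id}_{V}+d_{1}d_{2}\left(  \varphi\circ\psi-\psi\circ\varphi\right)$ and invoke the uniqueness in Theorem \ref{t2.3}. The single step you flag but defer --- that the pointwise-defined map $\varphi:V\rightarrow V$ is genuinely an $\mathbb{R}$-module homomorphism --- is exactly what the paper supplies: linearity of $X_{d}\in GL(V)$ forces the homogeneity $\varphi\left(  \alpha u\right)  =\alpha\varphi\left(  u\right)$, and homogeneity implies linearity by Proposition 10 (\S 1.2) of \cite{la}.
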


\begin{proof}
Given a mapping $X:D\rightarrow GL\left(  V\right)  $ with $X_{0}%
=\mathrm{id}_{V}$, there exists a unique mapping $\varphi:V\rightarrow V$ such
that
\[
X_{d}\left(  u\right)  =u+d\varphi\left(  u\right)
\]
for any $d\in D$ and any $u\in V$, since the $\mathbb{R}$-module $V$\ is
Euclidean by assumption. Since $X_{d}\in GL\left(  V\right)  $, we have
\[
\alpha u+d\varphi\left(  \alpha u\right)  =X_{d}\left(  \alpha u\right)
=\alpha X_{d}\left(  u\right)  =\alpha u+\alpha d\varphi\left(  u\right)
\]
for any $\alpha\in\mathbb{R}$, any $u\in V$ and any $d\in D$, so that we get
\[
\varphi\left(  \alpha u\right)  =\alpha\varphi\left(  u\right)
\]
for any $\alpha\in\mathbb{R}$ and any $u\in V$, which implies that the mapping
$\varphi:V\rightarrow V$\ is a homomorphism of $\mathbb{R}$-modules (cf.
Proposition 10 (\S \S 1.2) in \cite{la}). Conversely, given a homomorphism
$\varphi$\ of $\mathbb{R}$-modules from $V$\ into itself and $d\in D$,
$\mathrm{id}_{V}+d\varphi$\ is obviously a homomorphism of $\mathbb{R}%
$-modules from $V$\ into itself, and we have
\[
\left(  \mathrm{id}_{V}+d\varphi\right)  \circ\left(  \mathrm{id}_{V}%
-d\varphi\right)  =\left(  \mathrm{id}_{V}-d\varphi\right)  \circ\left(
\mathrm{id}_{V}+d\varphi\right)  =\mathrm{id}_{V}%
\]
so that the mapping $\mathrm{id}_{V}+d\varphi$\ is bijective. Therefore we are
sure that the $\mathbb{R}$-module $\mathfrak{gl}\left(  V\right)  $ is
naturally identified with the $\mathbb{R}$-module of homomorphisms of
$\mathbb{R}$-modules from $V$\ into itself. It remains to show that this
identification preserves Lie brackets. Let us assume that $X\in\mathfrak{gl}%
\left(  V\right)  $ corresponds to the homomorphism $\varphi$\ of $\mathbb{R}%
$-modules from $V$\ into itself, while $Y\in\mathfrak{gl}\left(  V\right)  $
corresponds to the homomorphism $\psi$\ of $\mathbb{R}$-modules from $V$\ into
itself. Then, given $d_{1},d_{2}\in D$, we have
\begin{align*}
&  \left[  X,Y\right]  _{d_{1}d_{2}}\\
&  =X_{d_{1}}.Y_{d_{2}}.X_{-d_{1}}.Y_{-d_{2}}\\
&  =\left(  \mathrm{id}_{V}+d_{1}\varphi\right)  \circ\left(  \mathrm{id}%
_{V}+d_{2}\psi\right)  \circ\left(  \mathrm{id}_{V}-d_{1}\varphi\right)
\circ\left(  \mathrm{id}_{V}-d_{2}\psi\right) \\
&  =\left\{  \mathrm{id}_{V}+d_{1}\varphi+d_{2}\psi+d_{1}d_{2}\varphi\circ
\psi\right\}  \circ\left\{  \mathrm{id}_{V}-d_{1}\varphi-d_{2}\psi+d_{1}%
d_{2}\varphi\circ\psi\right\} \\
&  =\mathrm{id}_{V}-d_{1}\varphi-d_{2}\psi+d_{1}d_{2}\varphi\circ\psi
+d_{1}\varphi-d_{1}d_{2}\varphi\circ\psi+d_{2}\psi-d_{1}d_{2}\psi\circ
\varphi+d_{1}d_{2}\varphi\circ\psi\\
&  =\mathrm{id}_{V}+d_{1}d_{2}\left(  \varphi\circ\psi-\psi\circ
\varphi\right)
\end{align*}
so that our identification of $\mathfrak{gl}\left(  V\right)  $ with the
$\mathbb{R}$-module of homomorphisms of $\mathbb{R}$-modules from $V$\ into
itself indeed preserves Lie brackets.
\end{proof}

\section{\label{s3}The Adjoint Representations}

\begin{notation}
Given $x\in G$, the mapping $y\in G\mapsto xyx^{-1}\in G$ is obviously a
homomorphism of groups, naturally giving rise to a mapping $\mathfrak{g}%
\rightarrow\mathfrak{g}$ as derivation, which we denote by $\mathrm{Ad\,}x\in
GL\left(  \mathfrak{g}\right)  $. Thus we have a homomorphism $\mathrm{Ad}%
:G\rightarrow GL\left(  \mathfrak{g}\right)  $ of groups, naturally giving
rise to a mapping $\mathrm{ad}:\mathfrak{g}\rightarrow\mathfrak{gl}\left(
\mathfrak{g}\right)  $ as derivation.
\end{notation}

\begin{theorem}
Given $X,Y\in\mathfrak{g}$, we have
\[
\left(  \mathrm{ad}\,X\right)  \left(  Y\right)  =\left[  X,Y\right]
\]

\end{theorem}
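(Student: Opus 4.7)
The plan is to peel off the two layers of definition. By construction, $\mathrm{Ad}\,x \in GL(\mathfrak{g})$ acts on a tangent vector $Y \in \mathfrak{g}$ by $(\mathrm{Ad}\,x)(Y)_d = x \cdot Y_d \cdot x^{-1}$, and the derivation $\mathrm{ad}:\mathfrak{g}\to\mathfrak{gl}(\mathfrak{g})$ is then characterized by the fact that $\mathrm{ad}\,X$ is the tangent vector at $\mathrm{id}_{\mathfrak{g}}\in GL(\mathfrak{g})$ given by $d_{1}\mapsto\mathrm{Ad}(X_{d_{1}})$.

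Next I will invoke Proposition \ref{t2.6} to identify $\mathfrak{gl}(\mathfrak{g})$ with the $\mathbb{R}$-linear endomorphisms of $\mathfrak{g}$; under this identification, $\mathrm{Ad}(X_{d_{1}})=\mathrm{id}_{\mathfrak{g}}+d_{1}\cdot(\mathrm{ad}\,X)$ as elements of $GL(\mathfrak{g})$. Applying this equality to $Y\in\mathfrak{g}$ and evaluating at $d_{2}$, using Proposition \ref{t2.2} for the sum of tangent vectors together with the convention $(\alpha Z)_{d}=Z_{\alpha d}$ for scalar multiplication, yields
\[
X_{d_{1}}\cdot Y_{d_{2}}\cdot X_{-d_{1}}=Y_{d_{2}}\cdot\bigl((\mathrm{ad}\,X)(Y)\bigr)_{d_{1}d_{2}},
\]
whence $\bigl((\mathrm{ad}\,X)(Y)\bigr)_{d_{1}d_{2}}=Y_{-d_{2}}\cdot X_{d_{1}}\cdot Y_{d_{2}}\cdot X_{-d_{1}}$.

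It remains to compare this with $[X,Y]_{d_{1}d_{2}}=X_{d_{1}}\cdot Y_{d_{2}}\cdot X_{-d_{1}}\cdot Y_{-d_{2}}$ from Theorem \ref{t2.3}. The apparent obstacle, which I expect to be the main subtlety, is a cyclic permutation moving $Y_{-d_{2}}$ from the front of the product to the back; a priori, group elements do not admit cyclic rewrites. My plan to clear this is a second application of Proposition \ref{t2.2}, this time to the pair $Y,\, d_{1}[X,Y]\in\mathfrak{g}$: evaluating their sum at $d_{2}$ gives $Y_{d_{2}}\cdot[X,Y]_{d_{1}d_{2}}=[X,Y]_{d_{1}d_{2}}\cdot Y_{d_{2}}$, so $Y_{d_{2}}$ (and hence $Y_{-d_{2}}$) commutes with $[X,Y]_{d_{1}d_{2}}$ inside $G$. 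This commutation turns the cyclic rewrite into an honest equality, and the uniqueness clause of Theorem \ref{t2.3} then forces $(\mathrm{ad}\,X)(Y)=[X,Y]$.
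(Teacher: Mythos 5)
Your proof is correct, and it rests on the same definitional unwinding as the paper's: express $\mathrm{Ad}(X_{d_{1}})$ as $\mathrm{id}_{\mathfrak{g}}+d_{1}\,\mathrm{ad}\,X$ via the identification of Proposition \ref{t2.6}, evaluate on $Y$, and match the result against the defining equation of the Lie bracket in Theorem \ref{t2.3}. The difference lies in the endgame. The paper evaluates the \emph{difference} $\left(\mathrm{Ad}\,X_{d}\right)\left(Y\right)-Y$ at $d^{\prime}$; by Proposition \ref{t2.2} this is $\bigl(\left(\mathrm{Ad}\,X_{d}\right)\left(Y\right)\bigr)_{d^{\prime}}.\left(-Y\right)_{d^{\prime}}=X_{d}.Y_{d^{\prime}}.X_{-d}.Y_{-d^{\prime}}$, which is already the commutator word of Theorem \ref{t2.3} in the correct order, so no rearrangement inside $G$ is needed and the identity $d\,\left(\mathrm{ad}\,X\right)\left(Y\right)=d\left[X,Y\right]$ drops out at once. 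You instead evaluate $\left(\mathrm{Ad}\,X_{d_{1}}\right)\left(Y\right)$ itself and land on the cyclically permuted word $Y_{-d_{2}}.X_{d_{1}}.Y_{d_{2}}.X_{-d_{1}}$, which you then repair with the observation --- obtained from Proposition \ref{t2.2} applied to the pair $Y$ and $d_{1}\left[X,Y\right]$ --- that $Y_{d_{2}}$, and hence $Y_{-d_{2}}=\left(Y_{d_{2}}\right)^{-1}$, commutes with $\left[X,Y\right]_{d_{1}d_{2}}$, before closing with the uniqueness clause of Theorem \ref{t2.3}. That extra commutation lemma is valid and the conclusion follows; what the paper's subtraction trick buys is precisely the avoidance of this detour, while your route makes explicit (and correctly resolves) the noncommutativity issue that a naive cyclic rewrite would otherwise hide.
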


\begin{proof}
Given $d,d^{\prime}\in D$, we have
\begin{align*}
&  \left(  \left(  \mathrm{Ad\,}X_{d}\right)  \left(  Y\right)  -Y\right)
_{d^{\prime}}\\
&  =X_{d}.Y_{d^{\prime}}.X_{-d}.Y_{-d^{\prime}}\\
&  \left)  \text{By Proposition \ref{t2.2}}\right( \\
&  =\left[  X,Y\right]  _{dd^{\prime}}\\
&  =\left(  d\left[  X,Y\right]  \right)  _{d^{\prime}}%
\end{align*}
so that we have the desired formula.
\end{proof}

\section{\label{s4}The Exponential Mapping}

Our notions of a one-parameter subgroup, a left-invariant vector field, etc.
are standard, and it is easy to see that

\begin{proposition}
\label{t4.1}Given a mapping $\theta:\mathbb{R}\rightarrow G$, the following
conditions are equivalent:

\begin{enumerate}
\item The mapping $\theta:\mathbb{R}\rightarrow G$\ is a one-parameter subgroup.

\item The mapping $\theta:\mathbb{R}\rightarrow G$\ is a flow of a left
invariant vector field on $G$ with $\theta\left(  0\right)  =e$.

\item The mapping $\theta:\mathbb{R}\rightarrow G$\ is a flow of a right
invariant vector field on $G$ with $\theta\left(  0\right)  =e$.
\end{enumerate}
\end{proposition}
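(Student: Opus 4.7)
The plan is to prove $(1)\Leftrightarrow(2)$ and $(1)\Leftrightarrow(3)$, pivoting on the uniqueness of integral curves of a vector field in the synthetic setting. First I would unpack each condition infinitesimally. Given $\theta:\mathbb{R}\to G$ with $\theta(0)=e$, the restriction of $\theta$ to $D$ produces a tangent vector $X\in\mathfrak{g}=\mathbf{T}_{e}G$ given by $X_{d}=\theta(d)$. The left invariant vector field $\tilde{X}^{L}$ associated with $X$ takes the value $\tilde{X}^{L}_{x}(d)=x.X_{d}$ at $x\in G$, so that $\theta$ being its flow through $e$ is equivalent to the equation $\theta(t+d)=\theta(t).X_{d}$ holding for all $t\in\mathbb{R}$ and $d\in D$. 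The right invariant analogue $\tilde{X}^{R}$ correspondingly gives $\theta(t+d)=X_{d}.\theta(t)$.

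The forward directions $(1)\Rightarrow(2)$ and $(1)\Rightarrow(3)$ are then immediate. If $\theta$ is a one-parameter subgroup and we set $X_{d}=\theta(d)$, then for any $t\in\mathbb{R}$ and $d\in D$,
\[
\theta(t+d)=\theta(t).\theta(d)=\theta(t).X_{d}=\theta(d).\theta(t)=X_{d}.\theta(t),
\]
so $\theta$ is simultaneously a flow of $\tilde{X}^{L}$ and of $\tilde{X}^{R}$ through $e$.

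The real work is in the converses. For $(2)\Rightarrow(1)$ I would fix $s\in\mathbb{R}$ and compare the two curves $\phi(t)=\theta(s+t)$ and $\psi(t)=\theta(s).\theta(t)$. Both satisfy $\phi(0)=\psi(0)=\theta(s)$, and for $d\in D$ one checks that $\phi(t+d)=\phi(t).X_{d}$ and $\psi(t+d)=\psi(t).X_{d}$. Hence $\phi$ and $\psi$ are both integral curves of $\tilde{X}^{L}$ issuing from $\theta(s)$, and by the uniqueness of such integral curves we conclude $\phi=\psi$, i.e.\ $\theta(s+t)=\theta(s).\theta(t)$. The implication $(3)\Rightarrow(1)$ is proved by the mirror argument, comparing $\phi(t)=\theta(t+s)$ with $\psi(t)=\theta(t).\theta(s)$ and invoking uniqueness for $\tilde{X}^{R}$.

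The main obstacle is precisely this uniqueness of integral curves on all of $\mathbb{R}$ rather than merely on infinitesimal pieces: the Kock--Lawvere axiom pins down flows only on each $D$, and bridging to globally defined curves requires a substantive (but standard) uniqueness principle for one-parameter groups of diffeomorphisms developed in Chapter 3 of \cite{la}, which the paper explicitly assumes as background.
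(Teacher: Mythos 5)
The paper offers no proof of this proposition at all---it is introduced with ``it is easy to see that''---so there is no official argument to compare against; yours is the standard one and is essentially what the author must have had in mind. The forward implications via $X_{d}=\theta(d)$, and the reduction of $(2)\Rightarrow(1)$ and $(3)\Rightarrow(1)$ to comparing $\theta(s+t)$ with $\theta(s).\theta(t)$ (resp.\ $\theta(t).\theta(s)$) as integral curves of the same invariant vector field through $\theta(s)$, are correct as written. The one point deserving more care is the one you yourself flag: uniqueness of integral curves defined on all of $\mathbb{R}$ is not a consequence of the Kock--Lawvere axiom or of microlinearity, and the background the paper assumes (Chapters 1--3 of \cite{la}) does not actually furnish such a uniqueness theorem for curves in an arbitrary microlinear group, so attributing it to ``Chapter 3 of \cite{la}'' overstates what is available there. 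It has the status of an additional principle that this paper tacitly assumes---on a par with the appeal to the \emph{unique} solution of the linear ordinary differential equation in the proof of Theorem \ref{t5.2}, and with the fact that the regularity definition demands only existence, not uniqueness, of left evolutions. Your proof is therefore complete modulo exactly the principle the paper itself takes for granted; stating it explicitly (two curves $c_{1},c_{2}:\mathbb{R}\rightarrow G$ with $c_{1}(0)=c_{2}(0)$ and $c_{i}(t+d)=c_{i}(t).X_{d}$ for all $t\in\mathbb{R}$, $d\in D$ coincide) would make the dependence transparent.
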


\begin{notation}
Given $X\in\mathfrak{g}$, if there is a one-parameter subgroup $\theta
:\mathbb{R}\rightarrow G$ with $\mathbf{d}\theta\left(  i_{D}^{\mathbb{R}%
}\right)  =X$, then we write $\exp^{G}\,X$ or $\exp\,X$ for $\theta\left(
1\right)  $.
\end{notation}

The following definition is borrowed from 38.4 in \cite{krmi}, which is, in
turn, owing to the research \cite{omy1}-\cite{omy6} of Omori et al.

\begin{definition}
A Lie group $G$\ is called regular provided that, for any mapping
$\varsigma:\mathbb{R}\rightarrow\mathfrak{g}$, there exists a mapping
$\theta:\mathbb{R}\rightarrow G$ with
\[
\theta\left(  0\right)  =e
\]
and
\[
\theta(t+d)=\theta\left(  t\right)  .\varsigma\left(  t\right)  _{d}%
\]
for any $t\in\mathbb{R}$ and any $d\in D$.
\end{definition}

From now on, we will assume the Lie group $G$ to be regular, so that $\exp
^{G}:\mathfrak{g}\rightarrow G$ is indeed a total function.

\begin{notation}
Given $\xi\in\mathfrak{gl}\left(  V\right)  $ with $\xi^{n+1}$ vanishing for
some natural number $n$, we write
\[
e^{\xi}=\sum_{i=0}^{n}\frac{\xi^{i}}{i!}%
\]

\end{notation}

It is easy to see that

\begin{lemma}
\label{t4.2}Given $\xi\in\mathfrak{gl}\left(  V\right)  $ with $\xi^{n+1}$
vanishing for some natural number $n$, we have
\[
\exp^{GL\left(  V\right)  }\,\xi=e^{\xi}%
\]

\end{lemma}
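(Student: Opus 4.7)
The plan is to exhibit the one-parameter subgroup $\theta\colon\mathbb{R}\to GL(V)$ witnessing $\exp^{GL(V)}\xi$ explicitly, namely
\[
\theta(t)=e^{t\xi}=\sum_{i=0}^{n}\frac{(t\xi)^{i}}{i!},
\]
which makes sense as a genuine polynomial in $t$ (not a formal series) thanks to the hypothesis $\xi^{n+1}=0$. Once $\theta$ is recognised as the one-parameter subgroup with infinitesimal generator $\xi$, evaluating at $t=1$ will give the result immediately. Three facts must then be checked: (i) each $\theta(t)$ actually lies in $GL(V)$; (ii) $\theta$ is a group homomorphism $\mathbb{R}\to GL(V)$; (iii) the differential condition $\mathbf{d}\theta(i_{D}^{\mathbb{R}})=\xi$ holds in $\mathfrak{gl}(V)$ under the identification of Proposition \ref{t2.6}.

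For (i), $\theta(t)$ is a linear combination of $\mathbb{R}$-linear endomorphisms of $V$ and is therefore itself an $\mathbb{R}$-linear endomorphism. Since $s\xi$ and $t\xi$ commute for any $s,t\in\mathbb{R}$, and all occurring sums truncate by nilpotency, the usual binomial manipulation yields $\theta(t)\circ\theta(-t)=e^{(t-t)\xi}=\mathrm{id}_{V}$, so $\theta(t)$ is invertible with inverse $\theta(-t)$. For (ii), the same commuting–binomial argument gives $\theta(s)\circ\theta(t)=e^{s\xi}\circ e^{t\xi}=e^{(s+t)\xi}=\theta(s+t)$, and $\theta(0)=\mathrm{id}_{V}$ is obvious.

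For (iii), let $d\in D$, so that $d^{2}=0$. The polynomial collapses:
\[
\theta(d)=\mathrm{id}_{V}+d\xi+\tfrac{d^{2}}{2}\xi^{2}+\cdots=\mathrm{id}_{V}+d\xi,
\]
and by the identification in Proposition \ref{t2.6} this is precisely the tangent vector $\xi\in\mathfrak{gl}(V)$ at $\mathrm{id}_{V}\in GL(V)$. Hence $\mathbf{d}\theta(i_{D}^{\mathbb{R}})=\xi$ as required.

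Combining (i)–(iii), $\theta$ is the one-parameter subgroup of $GL(V)$ attached to $\xi$, whence by definition $\exp^{GL(V)}\xi=\theta(1)=e^{\xi}$. There is no real obstacle here; the only substantive point is ensuring that the nilpotency of $\xi$ lets us treat $e^{t\xi}$ as a bona fide finite sum for \emph{all} real $t$, so that the familiar commuting–exponential identities (which would otherwise be formal) become outright equalities of $\mathbb{R}$-linear maps, and that the synthetic differential is read off correctly via the $d^{2}=0$ collapse.
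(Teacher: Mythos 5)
Your argument is correct, and since the paper simply prefaces Lemma \ref{t4.2} with ``it is easy to see that'' and omits any proof, your write-up supplies exactly the intended verification: exhibit $t\mapsto e^{t\xi}$ (a genuine finite sum by nilpotency of $\xi$) as a one-parameter subgroup of $GL(V)$, check invertibility and the homomorphism property via the commuting-binomial computation, and read off the generator from $\theta(d)=\mathrm{id}_{V}+d\xi$ under the identification of Proposition \ref{t2.6}, so that $\exp^{GL(V)}\xi=\theta(1)=e^{\xi}$. Nothing further is needed.
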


\begin{proposition}
\label{t4.3}Given a homomorphism $\varphi:G\rightarrow H$ of Lie groups and
$X\in\mathfrak{g}$, $\exp^{H}$\thinspace$\varphi^{\prime}\left(  X\right)  $
is defined, and we have
\[
\exp^{H}\,\varphi^{\prime}\left(  X\right)  =\varphi\left(  \exp
^{G}\,X\right)
\]

\end{proposition}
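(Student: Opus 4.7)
The plan is to transport the one-parameter subgroup on $G$ realizing $\exp^G X$ across $\varphi$ to obtain a one-parameter subgroup on $H$, and then read off its value at $1$.

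First, invoke regularity of $G$ to pick the one-parameter subgroup $\theta:\mathbb{R}\to G$ with $\theta(0)=e$ and $\mathbf{d}\theta(i_{D}^{\mathbb{R}})=X$, so that by definition $\exp^{G}X=\theta(1)$. Next, consider the composite $\varphi\circ\theta:\mathbb{R}\to H$. Since $\varphi$ is a homomorphism of groups, a direct computation
\[
(\varphi\circ\theta)(s+t)=\varphi(\theta(s)\theta(t))=\varphi(\theta(s))\varphi(\theta(t))=(\varphi\circ\theta)(s)(\varphi\circ\theta)(t)
\]
shows that $\varphi\circ\theta$ is itself a one-parameter subgroup of $H$, with $(\varphi\circ\theta)(0)=\varphi(e)=e$.

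Second, identify the tangent vector at the origin of this new one-parameter subgroup. By functoriality of the differential (i.e., $\mathbf{d}(\varphi\circ\theta)=\mathbf{d}\varphi\circ\mathbf{d}\theta$, which follows immediately from the definition of $\mathbf{d}$ as post-composition with maps on $M^{D}$), we get
\[
\mathbf{d}(\varphi\circ\theta)(i_{D}^{\mathbb{R}})=\mathbf{d}\varphi(\mathbf{d}\theta(i_{D}^{\mathbb{R}}))=\mathbf{d}\varphi(X)=\varphi^{\prime}(X),
\]
where the last equality uses that $X\in\mathfrak{g}=\mathbf{T}_{e}G$ and that $\varphi^{\prime}$ is defined precisely as the restriction of $\mathbf{d}\varphi$ to $\mathfrak{g}$. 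Thus $\varphi\circ\theta$ qualifies as the one-parameter subgroup witnessing the existence of $\exp^{H}\varphi^{\prime}(X)$, so $\exp^{H}\varphi^{\prime}(X)$ is defined and equals $(\varphi\circ\theta)(1)=\varphi(\theta(1))=\varphi(\exp^{G}X)$.

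There is essentially no genuine obstacle here; the only delicate point is the bookkeeping around the differentials, namely verifying that the chain rule for $\mathbf{d}$ in the microlinear setting gives the displayed identity $\mathbf{d}(\varphi\circ\theta)=\mathbf{d}\varphi\circ\mathbf{d}\theta$ and that restricting to the basepoint yields $\varphi^{\prime}(X)$ as claimed. Both are immediate from the definitions recalled in the introduction.
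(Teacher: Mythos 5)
Your proof is correct and follows exactly the paper's argument: push the one-parameter subgroup $\theta$ with $\mathbf{d}\theta(i_{D}^{\mathbb{R}})=X$ forward along $\varphi$, note that $\varphi\circ\theta$ is a one-parameter subgroup of $H$ with tangent $\varphi^{\prime}(X)$, and evaluate at $1$. You merely spell out the homomorphism check and the chain rule that the paper leaves implicit.
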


\begin{remark}
The Lie group $G$\ is assumed to be regular, as we have said before, but the
Lie group $H$\ is not assumed to be regular, so that $\exp^{H}$ is not
necessarily a total function.
\end{remark}

\begin{proof}
It suffices to note that, given a one-parameter subgroup $\theta
:\mathbb{R}\rightarrow G$\ of $G$\ with
\[
\mathbf{d}\theta\left(  i_{D}^{\mathbb{R}}\right)  =X\text{,}%
\]
the mapping $\varphi\circ\theta:\mathbb{R}\rightarrow H$\ is a one-parameter
subgroup of $H$\ with
\[
\mathbf{d}\left(  \varphi\circ\theta\right)  \left(  i_{D}^{\mathbb{R}%
}\right)  =\varphi^{\prime}\left(  X\right)
\]

\end{proof}

\begin{proposition}
\label{t4.4}Given $X\in\mathfrak{g}$\ with $\left(  \mathrm{ad}\,X\right)
^{n+1}$ vanishing for some natural number $n$, we have
\[
\mathrm{Ad\,}\left(  \exp\,X\right)  =e^{\mathrm{ad}\,X}%
\]

\end{proposition}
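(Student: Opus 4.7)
The plan is to recognize this as a direct application of the functoriality statement in Proposition \ref{t4.3} combined with the identification of the exponential on $GL(V)$ with the truncated power series given in Lemma \ref{t4.2}. The homomorphism of Lie groups to apply them to is precisely the adjoint representation $\mathrm{Ad}:G\rightarrow GL(\mathfrak{g})$ introduced in Section \ref{s3}.

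First I would invoke Proposition \ref{t4.3} with $\varphi=\mathrm{Ad}:G\rightarrow GL(\mathfrak{g})$, which gives
\[
\exp^{GL(\mathfrak{g})}\bigl(\mathrm{Ad}'(X)\bigr)=\mathrm{Ad}\bigl(\exp^{G}\,X\bigr).
\]
Next I would observe that, by the very definition of $\mathrm{ad}$ in Section \ref{s3} (namely, $\mathrm{ad}$ is the derivation of $\mathrm{Ad}$ at $e$, i.e., the restriction of $\mathbf{d}\,\mathrm{Ad}$ to $\mathfrak{g}$), we have $\mathrm{Ad}'(X)=\mathrm{ad}\,X$. Finally, since $(\mathrm{ad}\,X)^{n+1}$ vanishes by hypothesis, Lemma \ref{t4.2} applies to $\xi=\mathrm{ad}\,X\in\mathfrak{gl}(\mathfrak{g})$ and yields $\exp^{GL(\mathfrak{g})}(\mathrm{ad}\,X)=e^{\mathrm{ad}\,X}$. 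Chaining these three identities gives the desired formula.

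There is no serious obstacle here: each ingredient has already been set up in the preceding material. The only point demanding a moment of care is the identification $\mathrm{Ad}'(X)=\mathrm{ad}\,X$, which is immediate from the notation introduced at the start of Section \ref{s3} but should be stated explicitly so that the reader sees why the hypothesis that $(\mathrm{ad}\,X)^{n+1}$ vanishes is exactly what makes Lemma \ref{t4.2} applicable to the element $\mathrm{Ad}'(X)\in\mathfrak{gl}(\mathfrak{g})$.
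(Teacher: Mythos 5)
Your proposal is correct and is essentially the paper's own proof: the paper likewise applies Proposition \ref{t4.3} to the homomorphism $\mathrm{Ad}:G\rightarrow GL(\mathfrak{g})$ (with $\mathrm{Ad}'=\mathrm{ad}$ by the definition in \S \ref{s3}) and then concludes with Lemma \ref{t4.2}. Your explicit remark on why the nilpotency hypothesis makes Lemma \ref{t4.2} applicable is a small clarification the paper leaves implicit, but the route is the same.
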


\begin{proof}
We have
\begin{align*}
&  \mathrm{Ad\,}\left(  \exp^{G}\,X\right) \\
&  =\exp^{GL\left(  V\right)  }\,\left(  \mathrm{ad}\,X\right) \\
&  \left)  \text{By Proposition \ref{t4.3}}\right( \\
&  =e^{\mathrm{ad}\,X}\\
&  \left)  \text{By Lemma \ref{t4.2}}\right(
\end{align*}

\end{proof}

We conclude this section by the following simple but significant proposition.

\begin{proposition}
\label{t4.5}We have
\[
\exp\,t\left(  dX\right)  =X_{td}%
\]
for any $t\in\mathbb{R}$. In particular, we have
\[
\exp\,dX=X_{d}%
\]
by setting
\[
t=1
\]

\end{proposition}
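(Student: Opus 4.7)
The plan is to exhibit an explicit one-parameter subgroup whose derivative at $0$ is $dX$, and then read off its value at $t$. Concretely, I would define
\[
\theta:\mathbb{R}\rightarrow G,\qquad \theta(t)=X_{td},
\]
which makes sense because $(td)^{2}=t^{2}d^{2}=0$, so $td\in D$ for every $t\in\mathbb{R}$.

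First I would check that $\theta$ is a one-parameter subgroup. Clearly $\theta(0)=X_{0}=e$. For the homomorphism property, observe that for any $s,t\in\mathbb{R}$ the pair $(sd,td)$ lies in $D(2)$, since $(sd)(td)=st\,d^{2}=0$. Hence Proposition \ref{t2.1} yields
\[
\theta(s+t)=X_{(s+t)d}=X_{sd+td}=X_{sd}.X_{td}=\theta(s).\theta(t).
\]

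Next I would identify $\mathbf{d}\theta(i_{D}^{\mathbb{R}})\in\mathfrak{g}$. By definition this tangent vector sends $d'\in D$ to $\theta(d')=X_{d'd}$. Using that scalar multiplication on tangent vectors in $\mathfrak{g}$ is given by $(\alpha X)_{d'}=X_{\alpha d'}$, we recognise $X_{d'd}=(dX)_{d'}$, so $\mathbf{d}\theta(i_{D}^{\mathbb{R}})=dX$. Therefore $\theta$ is the one-parameter subgroup associated to $dX$, and the notation gives $\exp(dX)=\theta(1)=X_{d}$. The same argument applied to $t(dX)=(td)X$ (using $\theta(ts)$ as the one-parameter subgroup of $t(dX)$, equivalently rescaling the parameter) yields $\exp\,t(dX)=\theta(t)=X_{td}$.

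The only place that requires real care is the identification in the previous paragraph, where one must invoke the scalar-multiplication convention $(dX)_{d'}=X_{dd'}$ on tangent vectors at $e$; everything else is a direct application of Proposition \ref{t2.1} together with the definition of $\exp$. There is no genuine obstacle.
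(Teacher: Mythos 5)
Your proof is correct and is essentially the paper's own argument: the paper likewise shows, via Proposition \ref{t2.1}, that $t\mapsto X_{td}=(dX)_{t}$ satisfies $(dX)_{t+d'}=(dX)_{t}.(dX)_{d'}$, i.e.\ it is the one-parameter subgroup with derivative $dX$, and then reads off $\exp\,t(dX)=X_{td}$. You merely make two steps more explicit (checking the homomorphism property for all real $s,t$ and the rescaling to $t(dX)=(td)X$), which the paper leaves implicit.
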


\begin{proof}
For any $d^{\prime}\in D$, we have
\begin{align*}
&  \left(  dX\right)  _{t+d^{\prime}}\\
&  =X_{\left(  t+d^{\prime}\right)  d}\\
&  =X_{td+d^{\prime}d}\\
&  =X_{td}.X_{d^{\prime}d}\\
&  \left)  \text{By Proposition \ref{t2.1}}\right( \\
&  =\left(  dX\right)  _{t}.\left(  dX\right)  _{d^{\prime}}%
\end{align*}
so that we have the desired conclusion.
\end{proof}

\section{\label{s5}Logarithmic Derivatives}

In this section we deal with the left and right derivations. First we deal
with the left derivation.

\begin{definition}
Given a microlinear space $M$\ and a function $f:M\rightarrow G$, the
function
\[
\delta^{\mathrm{left}}f:\mathbf{T}M\rightarrow\mathfrak{g}%
\]
is defined to be such that
\[
\left(  \mathbf{d}f\left(  X\right)  \right)  _{d}=f\left(  x\right)  .\left(
\delta^{\mathrm{left}}f\left(  X\right)  \right)  _{d}%
\]
for any $x\in M$, any $X\in\mathbf{T}_{x}M$ and any $d\in D$. It is called the
left logarithmic derivative of $f$. The restriction of $\delta^{\mathrm{left}%
}f$ to $\mathbf{T}_{x}M$\ is denoted by $\delta^{\mathrm{left}}f\left(
x\right)  $.
\end{definition}

The following is the Leibniz rule for the left logarithmic derivation.

\begin{proposition}
\label{t5.1}Let $M$\ be a microlinear space. Given two functions
\[
f,g:M\rightarrow G
\]
together with $X\in\mathbf{T}M$, we have
\[
\delta^{\mathrm{left}}\left(  fg\right)  \left(  X\right)  =\delta
^{\mathrm{left}}g\left(  X\right)  +\mathrm{Ad\,}\left(  g\left(  x\right)
^{-1}\right)  \left(  \delta^{\mathrm{left}}f\left(  X\right)  \right)
\]
with
\[
x=X_{0}%
\]

\end{proposition}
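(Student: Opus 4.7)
The plan is to unwind the definition of $\delta^{\mathrm{left}}$ at an arbitrary $d\in D$, massage the product on the group side using conjugation, and then recognize the conjugation as $\mathrm{Ad}$ applied to a tangent vector.

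First I would fix $x=X_{0}$ and compute $(\mathbf{d}(fg)(X))_{d}=(fg)(X_{d})=f(X_{d})\,g(X_{d})$. Applying the defining equation of the left logarithmic derivative to $f$, $g$, and $fg$ separately, this gives
\[
f(x)\,g(x).\bigl(\delta^{\mathrm{left}}(fg)(X)\bigr)_{d}
=f(x).\bigl(\delta^{\mathrm{left}}f(X)\bigr)_{d}\,.\,g(x).\bigl(\delta^{\mathrm{left}}g(X)\bigr)_{d}.
\]
Cancelling $f(x)$ on the left and then multiplying by $g(x)^{-1}$ on the left yields
\[
\bigl(\delta^{\mathrm{left}}(fg)(X)\bigr)_{d}
=\Bigl(g(x)^{-1}.\bigl(\delta^{\mathrm{left}}f(X)\bigr)_{d}.g(x)\Bigr)\,.\,\bigl(\delta^{\mathrm{left}}g(X)\bigr)_{d}.
\]

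Second, I would interpret the first factor on the right. By definition of $\mathrm{Ad}$ as the derivative of the conjugation $y\mapsto g(x)^{-1}y\,g(x)$, one has $\bigl(\mathrm{Ad}(g(x)^{-1})(Y)\bigr)_{d}=g(x)^{-1}.Y_{d}.g(x)$ for every $Y\in\mathfrak{g}$ and $d\in D$. Hence the displayed equation becomes
\[
\bigl(\delta^{\mathrm{left}}(fg)(X)\bigr)_{d}
=\bigl(\mathrm{Ad}(g(x)^{-1})(\delta^{\mathrm{left}}f(X))\bigr)_{d}\,.\,\bigl(\delta^{\mathrm{left}}g(X)\bigr)_{d}.
\]
Finally I would invoke Proposition \ref{t2.2} to collapse the product on the right into $\bigl(\mathrm{Ad}(g(x)^{-1})(\delta^{\mathrm{left}}f(X))+\delta^{\mathrm{left}}g(X)\bigr)_{d}$, and since this equality holds for every $d\in D$, the uniqueness of $Z\in\mathfrak{g}$ such that $Z_{d}$ equals a given expression (built into the definition of a tangent vector via microlinearity) gives the claimed identity.

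The only subtle point is the bookkeeping of which side the conjugation lands on: because $(fg)(X_{d})$ places $g(X_{d})$ to the right, the correction factor that must be reordered past $g(x)$ is $(\delta^{\mathrm{left}}f(X))_{d}$, which is exactly why $\mathrm{Ad}(g(x)^{-1})$ (and not $\mathrm{Ad}(g(x))$) appears. No deeper obstacle is anticipated; the whole argument is a one-step infinitesimal manipulation with the definitions.
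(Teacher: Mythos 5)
Your proof is correct and follows essentially the same route as the paper's: unwind the definition of $\delta^{\mathrm{left}}$ at an arbitrary $d\in D$, insert $g(x).g(x)^{-1}$ so the $f$-correction gets conjugated by $g(x)^{-1}$ (which is exactly $\mathrm{Ad}\,(g(x)^{-1})$ applied to $\delta^{\mathrm{left}}f(X)$, by the definition of $\mathrm{Ad}$ as the derivative of conjugation), and then collapse the product of the two infinitesimal factors into a sum via Proposition \ref{t2.2}. The bookkeeping of the conjugation side and the final appeal to uniqueness of the tangent vector are exactly as in the paper.
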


\begin{proof}
For any $d\in D$, we have
\begin{align*}
&  \left(  \delta^{\mathrm{left}}\left(  fg\right)  \left(  X\right)  \right)
_{d}\\
&  =g(x)^{-1}.f\left(  x\right)  ^{-1}.f\left(  X_{d}\right)  .g\left(
X_{d}\right) \\
&  =g(x)^{-1}.f\left(  x\right)  ^{-1}.f\left(  X_{d}\right)  .g(x).g(x)^{-1}%
.g\left(  X_{d}\right) \\
&  =\left\{  \mathrm{Ad\,}\left(  g\left(  x\right)  ^{-1}\right)  \left(
\delta^{\mathrm{left}}f\left(  X\right)  \right)  +\delta g\left(  X\right)
\right\}  _{d}\\
&  \left)  \text{By Proposition \ref{t2.2}}\right(
\end{align*}
so that we get the desired formula.
\end{proof}

\begin{theorem}
\label{t5.2}Given $X\in\mathfrak{g}$\ with $\left(  \mathrm{ad}\,X\right)
^{n+1}$ vanishing for some natural number $n$, we have
\[
\delta^{\mathrm{left}}\left(  \exp\right)  \left(  X\right)  =\sum_{p=0}%
^{n}\frac{\left(  -1\right)  ^{p}}{\left(  p+1\right)  !}\left(
\mathrm{ad}\,X\right)  ^{p}%
\]

\end{theorem}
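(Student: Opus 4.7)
My plan is to introduce a real parameter $t$, convert the identity into a first-order linear ODE for the curve $Z(t) := \delta^{\mathrm{left}}(\exp)(tX)(Y) \in \mathfrak{g}$, and then solve it explicitly. Fixing $Y \in \mathfrak{g}$, the definition of $\delta^{\mathrm{left}}$ together with the tangent-vector rescaling $(cA)_{d} = A_{cd}$ and Proposition \ref{t4.5} give, for every $t \in \mathbb{R}$ and $d \in D$,
\[
\exp(t(X+dY)) = \exp(tX + tdY) = \exp(tX) \cdot Z(t)_{td};
\]
at $t = 1$ this reduces to the quantity we want, so it will suffice to show that $Z(1) = \sum_{p=0}^{n}\frac{(-1)^{p}}{(p+1)!}(\mathrm{ad}\,X)^{p}(Y)$.

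The key observation will be that $t \mapsto \exp(t(X+dY))$ is a one-parameter subgroup generated by $X + dY$, so for $s \in D$ one has $\exp((t+s)(X+dY)) = \exp(t(X+dY)) \cdot \exp(s(X+dY))$. By Propositions \ref{t4.5} and \ref{t2.2}, the right factor equals $X_{s} \cdot Y_{sd}$. I will substitute the displayed identity on both sides, cancel the common $\exp(tX)$, and conjugate by $X_{s}$ to convert $X_{-s} Z(t)_{td} X_{s}$ into $(\mathrm{Ad}(\exp(-sX))(Z(t)))_{td}$; by Proposition \ref{t4.4} this equals $(Z(t) - s[X,Z(t)])_{td}$, since $s^{2} = 0$. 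Taylor-expanding $Z(t+s) = Z(t) + s Z'(t)$ on the left, splitting and recombining via Propositions \ref{t2.1} and \ref{t2.2}, and rescaling with $A_{std} = (tA)_{sd}$, the whole equation should collapse to
\[
(tZ'(t) + Z(t))_{sd} = (Y - t[X,Z(t)])_{sd} \quad \text{for every } s,d \in D.
\]
A double application of microcancellation on the vector space $\mathfrak{g}$ (first in $s$, then in $d$) will then yield the linear ODE
\[
tZ'(t) + Z(t) + t[X,Z(t)] = Y.
\]

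Evaluating the ODE at $t = 0$ (or just noting that $\exp(dY) = Y_{d}$) gives $Z(0) = Y$. Writing $Z(t) = \sum_{p \geq 0} a_{p} t^{p}$ and substituting into the ODE forces $a_{0} = Y$ together with the recurrence $(p+1) a_{p} + (\mathrm{ad}\,X)(a_{p-1}) = 0$, solved by $a_{p} = \frac{(-1)^{p}}{(p+1)!}(\mathrm{ad}\,X)^{p}(Y)$. The nilpotency assumption $(\mathrm{ad}\,X)^{n+1} = 0$ truncates the series at degree $n$, so $Z(t)$ is a genuine polynomial in $t$, and setting $t = 1$ yields the asserted formula.

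The hard part will be the middle paragraph: extracting the ODE cleanly requires juggling three infinitesimals ($s$, $d$, and their product $sd$) simultaneously, carefully distinguishing scalar multiplication by the real number $t$ from multiplication inside $D$, and invoking Propositions \ref{t2.1}, \ref{t2.2}, \ref{t4.4}, and \ref{t4.5} in the right order. Once the ODE is available, the remaining steps are purely algebraic.
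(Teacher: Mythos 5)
Your argument is sound and it reaches the paper's formula by a genuinely different middle step. The paper differentiates the identity $\exp\left(  \left(  s+t\right)  V\right)  =\exp\left(  sV\right)  .\exp\left(  tV\right)$ in the Lie-algebra variable $V$, using the chain rule and the Leibniz rule of Proposition \ref{t5.1}, to get the functional equation $F\left(  s+t\right)  =F\left(  t\right)  +\mathrm{Ad\,}\left(  \exp\,\left(  -t\right)  X\right)  \left(  F\left(  s\right)  \right)$ for the operator $F\left(  s\right)  =s\delta^{\mathrm{left}}\left(  \exp\right)  \left(  sX\right)$, and then differentiates in $t$ at $t=0$. You instead fix $Y$, perturb $X$ to $X+dY$, and exploit the one-parameter group law with an infinitesimal increment $s\in D$, so that both differentiations are carried out in one stroke by infinitesimal algebra via Propositions \ref{t2.1}, \ref{t2.2}, \ref{t4.4} and \ref{t4.5}; in effect you re-derive by hand exactly the instance of the Leibniz rule that is needed, namely the appearance of $\mathrm{Ad\,}\left(  \exp\,\left(  -s\right)  X\right)  =\mathrm{id}-s\,\mathrm{ad}\,X$. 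Both routes arrive at the same linear differential equation, since your $tZ^{\prime}\left(  t\right)  +Z\left(  t\right)  +t\left[  X,Z\left(  t\right)  \right]  =Y$ is the paper's $F^{\prime}=\mathrm{id}_{\mathfrak{g}}-\left(  \mathrm{ad}\,X\right)  F$ evaluated on $Y$ (note $F\left(  t\right)  \left(  Y\right)  =tZ\left(  t\right)$). What the paper's route buys is brevity once Proposition \ref{t5.1} is available; what yours buys is a self-contained, more visibly synthetic computation that never invokes the Leibniz rule or a chain rule for $\delta^{\mathrm{left}}$.

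Two cautions. First, the final cancellation of $s$ and $d$ is not the Kock--Lawvere microcancellation of a Euclidean module: $\mathfrak{g}=\mathbf{T}_{e}G$ is only known to be a microlinear $\mathbb{R}$-module, and what you need is that a tangent vector is determined by its values on all products $d_{1}d_{2}$ --- precisely the uniqueness underlying Theorem \ref{t2.3} (Lavendhomme, \S 3.2 of \cite{la}); cite that rather than ``microcancellation.'' Second, the step ``writing $Z\left(  t\right)  =\sum_{p\geq0}a_{p}t^{p}$'' is not free in this setting, since a map $\mathbb{R}\rightarrow\mathfrak{g}$ need not be a polynomial or a power series; as written, your recurrence only shows that the Taylor coefficients of $Z$ at $0$ are forced, which by itself does not determine $Z$. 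To be on the same footing as the paper, you should instead check directly that $W\left(  t\right)  =\sum_{p=0}^{n}\frac{\left(  -1\right)  ^{p}t^{p}}{\left(  p+1\right)  !}\left(  \mathrm{ad}\,X\right)  ^{p}\left(  Y\right)$ satisfies the derived initial-value problem and then appeal to uniqueness of its solution --- the same uniqueness assertion the paper itself invokes, without proof, for $F^{\prime}=\mathrm{id}_{\mathfrak{g}}-\left(  \mathrm{ad}\,X\right)  F$ with $F\left(  0\right)  =0$.
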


\begin{proof}
The proof is essentially on the lines of Lemma 4.27 of \cite{mi}. We have
\begin{align*}
&  \left(  s+t\right)  \delta^{\mathrm{left}}\left(  \exp\right)  \left(
\left(  s+t\right)  X\right) \\
&  =\delta^{\mathrm{left}}\left(  \exp\,\left(  s+t\right)  \cdot\right)
\left(  X\right) \\
&  \text{[By the chain rule of differentiation]}\\
&  =\delta^{\mathrm{left}}\left(  \left(  \exp\,s\cdot\right)  \left(
\exp\,t\cdot\right)  \right)  \left(  X\right) \\
&  =\delta^{\mathrm{left}}\left(  \exp\,t\cdot\right)  \left(  X\right)
+\mathrm{Ad\,}\left(  \exp\,\left(  -t\right)  X\right)  \left(
\delta^{\mathrm{left}}\left(  \exp\,s\cdot\right)  \right)  \left(  X\right)
\\
&  \text{[By Proposition \ref{t5.1}]}\\
&  =t\delta^{\mathrm{left}}\left(  \exp\right)  \left(  tX\right)
+\mathrm{Ad\,}\left(  \exp\,\left(  -t\right)  X\right)  \left(
s\delta^{\mathrm{left}}\left(  \exp\right)  \left(  sX\right)  \right)
\end{align*}
so that, by letting
\[
F\left(  s\right)  =s\delta^{\mathrm{left}}\left(  \exp\right)  \left(
sX\right)
\]
so as to introduce a function
\[
F:\mathbb{R}\rightarrow L\left(  \mathfrak{g},\mathfrak{g}\right)  \text{,}%
\]
we get
\[
F(s+t)=F\left(  t\right)  +\mathrm{Ad\,}\left(  \exp\,\left(  -t\right)
X\right)  \left(  F\left(  s\right)  \right)  \text{,}%
\]
which earns us
\begin{equation}
F^{\prime}\left(  s\right)  =F^{\prime}\left(  0\right)  -\left(
\mathrm{ad}\,X\right)  \left(  F\left(  s\right)  \right)  \label{5.2.1}%
\end{equation}
by fixing $s$\ and differentiaing with respect to $t$\ at $t=0$. Since we have
also
\[
F^{\prime}\left(  s\right)  =\delta^{\mathrm{left}}\left(  \exp\right)
\left(  sX\right)  +s\delta^{\mathrm{left}}\left(  \exp\right)  \left(
X\right)  \text{,}%
\]
we get
\[
F^{\prime}\left(  0\right)  =\mathrm{id}_{\mathfrak{g}}\text{,}%
\]
by letting $s=0$,\ so that the formula (\ref{5.2.1}) is transmogrified into
the ordinary differential equation
\[
F^{\prime}\left(  s\right)  =\mathrm{id}_{\mathfrak{g}}-\left(  \mathrm{ad}%
\,X\right)  \left(  F\left(  s\right)  \right)
\]
on $L\left(  \mathfrak{g},\mathfrak{g}\right)  $. Its unique solution with the
initial condition of $F\left(  0\right)  $'s vanishing is
\[
F\left(  s\right)  =\sum_{p=0}^{n}\frac{\left(  -1\right)  ^{p}s^{p+1}%
}{\left(  p+1\right)  !}\left(  \mathrm{ad}\,X\right)  ^{p}\text{,}%
\]
which results in the desired formula by letting $s=1$.
\end{proof}

\begin{proposition}
\label{t5.3}Given $X,Y\in\mathfrak{g}$\ with $\left[  X,Y\right]
$\ vanishing, we have
\[
\exp\,X.\exp\,Y=\exp\,X+Y
\]
In particular, we have
\[
\exp\,X.\exp\,Y=\exp\,Y.\exp\,X
\]

\end{proposition}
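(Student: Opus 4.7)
The plan is to show that the curve $\alpha:\mathbb{R}\rightarrow G$ defined by $\alpha(t)=\exp(tX)\cdot\exp(tY)$ coincides with the one-parameter subgroup $t\mapsto\exp(t(X+Y))$ generated by $X+Y$; evaluation at $t=1$ will then yield the first asserted equality. By Proposition \ref{t4.1} together with uniqueness of flows of left-invariant vector fields on a regular Lie group, it suffices to verify that $\alpha(0)=e$ (which is immediate) and that $\delta^{\mathrm{left}}\alpha(\partial_{t})=X+Y$ independently of $t\in\mathbb{R}$, equivalently that $\alpha(t+d)=\alpha(t)\cdot(X+Y)_{d}$ for every $t\in\mathbb{R}$ and $d\in D$.

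Writing $f(t)=\exp(tX)$ and $g(t)=\exp(tY)$, so $\alpha=fg$, the Leibniz rule of Proposition \ref{t5.1} at the canonical tangent vector $\partial_{t}$ gives
\[
\delta^{\mathrm{left}}\alpha(\partial_{t})=\delta^{\mathrm{left}}g(\partial_{t})+\mathrm{Ad\,}(\exp(-tY))\left(\delta^{\mathrm{left}}f(\partial_{t})\right).
\]
The one-parameter subgroup property of $\exp$ together with Proposition \ref{t4.5} gives $f(t+d)=\exp(tX)\cdot X_{d}=f(t)\cdot X_{d}$, so that $\delta^{\mathrm{left}}f(\partial_{t})=X$, and similarly $\delta^{\mathrm{left}}g(\partial_{t})=Y$. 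Hence the task reduces to showing that $\mathrm{Ad\,}(\exp(-tY))(X)=X$ for every $t\in\mathbb{R}$, under the hypothesis $[X,Y]=0$.

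The main obstacle is that the natural route through Proposition \ref{t4.4} requires nilpotency of $\mathrm{ad\,}Y$, which is not available here. I circumvent this by examining the curve $\beta:\mathbb{R}\rightarrow\mathfrak{g}$, $\beta(t)=\mathrm{Ad\,}(\exp(tY))(X)$, and showing it is constant. Since $\mathrm{Ad}:G\rightarrow GL(\mathfrak{g})$ is a homomorphism with derivative $\mathrm{ad}$, Proposition \ref{t4.3} makes $t\mapsto\mathrm{Ad\,}(\exp(tY))$ a one-parameter subgroup of $GL(\mathfrak{g})$ generated by $\mathrm{ad\,}Y$; expressing $\mathrm{Ad\,}(\exp(dY))=\mathrm{id}_{\mathfrak{g}}+d\cdot\mathrm{ad\,}Y$ via Propositions \ref{t4.5} and \ref{t2.6}, we obtain for any $d\in D$
\[
\beta(t+d)=\mathrm{Ad\,}(\exp(tY))\circ(\mathrm{id}_{\mathfrak{g}}+d\cdot\mathrm{ad\,}Y)(X)=\beta(t)+d\cdot\mathrm{Ad\,}(\exp(tY))([Y,X])=\beta(t),
\]
using $[Y,X]=-[X,Y]=0$. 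Thus $\beta$ has vanishing derivative and $\beta(0)=X$, so $\beta\equiv X$, giving in particular $\mathrm{Ad\,}(\exp(-tY))(X)=X$. This completes the identification $\alpha(t)=\exp(t(X+Y))$, yielding $\exp X\cdot\exp Y=\exp(X+Y)$; the commutativity $\exp X\cdot\exp Y=\exp Y\cdot\exp X$ then follows by interchanging the roles of $X$ and $Y$, which is permissible because $[Y,X]=0$ as well.
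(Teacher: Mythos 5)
Your proof is correct at the paper's level of rigor, but it takes a genuinely different route. The paper introduces the auxiliary curve $H(t)=\exp\,X.\exp\,tY.\exp\,-\left(X+tY\right)$, notes $H(0)=e$, and computes $\delta^{\mathrm{left}}H(t)=0$ by combining the Leibniz rule (Proposition \ref{t5.1}) with the explicit formula for $\delta^{\mathrm{left}}\left(\exp\right)$ (Theorem \ref{t5.2}) and with $\mathrm{Ad}\,\left(\exp\,X+tY\right)=e^{\mathrm{ad}\,\left(X+tY\right)}$ (Proposition \ref{t4.4}); constancy of $H$ then yields the formula. You instead show directly that $\alpha(t)=\exp\,tX.\exp\,tY$ is a flow of the left-invariant vector field attached to $X+Y$, reducing everything to the invariance $\mathrm{Ad}\,\left(\exp\,tY\right)\left(X\right)=X$, which you prove infinitesimally from $\mathrm{Ad}\,\left(\exp\,dY\right)=\mathrm{id}_{\mathfrak{g}}+d\,\mathrm{ad}\,Y$ and $\left[Y,X\right]=0$. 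Your route buys something real: it bypasses Theorem \ref{t5.2} and Proposition \ref{t4.4} entirely, both of which are stated under a nilpotency hypothesis on $\mathrm{ad}$ that is not actually available in Proposition \ref{t5.3} (the paper applies them anyway, tacitly using only that $\mathrm{ad}\,\left(X+tY\right)$ annihilates $Y$), so in this respect your argument is cleaner than the paper's. The cost is an appeal of the same kind both arguments make implicitly: the paper needs that a curve with vanishing left logarithmic derivative and value $e$ at $0$ is constantly $e$, while you need constancy of $\beta$ from a vanishing derivative and the uniqueness of the one-parameter subgroup with initial velocity $X+Y$; note that regularity as defined gives only existence, so strictly it is the well-definedness of $\exp$ presupposed in the paper's Notation (uniqueness of one-parameter subgroups), together with Proposition \ref{t4.1}, that closes your argument --- an assumption at exactly the paper's own level of rigor. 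With that understood, your Leibniz-rule computation, the identification $\mathrm{Ad}\,\left(\exp\,dY\right)=\mathrm{id}_{\mathfrak{g}}+d\,\mathrm{ad}\,Y$ via Propositions \ref{t4.5} and \ref{t2.6}, and the evaluation at $t=1$ are all sound, as is the interchange of $X$ and $Y$ for the commutativity statement.
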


\begin{proof}
Letting
\[
H\left(  t\right)  =\exp\,X.\exp\,tY.\exp\,-\left(  X+tY\right)
\]
so as to get a function
\[
H:\mathbb{R}\rightarrow G\text{,}%
\]
we have
\[
H\left(  0\right)  =e
\]
evidently. By differentiating $H$\ logarithmically, we have
\begin{align*}
&  \delta^{\mathrm{left}}H\left(  t\right) \\
&  =\delta^{\mathrm{left}}\left(  \exp\right)  \left(  -\left(  X+tY\right)
\right)  \left(  -Y\right)  +\mathrm{Ad\,}\left(  \exp\,X+tY\right)  \left(
\delta^{\mathrm{left}}\left(  \exp\right)  \left(  tY\right)  \left(
Y\right)  \right) \\
&  =-Y+\mathrm{Ad\,}\left(  \exp\,X+tY\right)  \left(  Y\right) \\
&  =-Y+e^{\mathrm{ad\,}\left(  \,X+tY\right)  }\left(  Y\right) \\
&  =-Y+Y\\
&  =0
\end{align*}
so that we have the desired formula.
\end{proof}

\begin{proposition}
\label{t5.4}Given $X,Y\in\mathfrak{g}$\ and $d_{1},d_{2}\in D$, we have
\begin{align*}
&  \exp\,d_{1}X.\exp\,d_{2}Y\\
&  =\exp\,d_{2}Y.\exp\,d_{1}X.\exp\,d_{1}d_{2}\left[  X,Y\right]
\end{align*}

\end{proposition}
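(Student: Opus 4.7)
The plan is to convert the exponentials into tangent-vector action via Proposition \ref{t4.5} and then appeal twice to the commutator formula of Theorem \ref{t2.3}. Since $d_1,d_2\in D$ imply $(d_1d_2)^2=d_1^2d_2^2=0$, we have $d_1d_2\in D$; Proposition \ref{t4.5} then rewrites $\exp d_1X=X_{d_1}$, $\exp d_2Y=Y_{d_2}$, and $\exp d_1d_2[X,Y]=[X,Y]_{d_1d_2}$, so the statement reduces to the group identity
\[
X_{d_1}.Y_{d_2}=Y_{d_2}.X_{d_1}.[X,Y]_{d_1d_2}.
\]

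Theorem \ref{t2.3} applied to $X,Y$ at $(d_1,d_2)$ gives $X_{d_1}.Y_{d_2}.X_{-d_1}.Y_{-d_2}=[X,Y]_{d_1d_2}$. Right-multiplying by $Y_{d_2}.X_{d_1}$ and cancelling $Y_{-d_2}.Y_{d_2}=e$ and $X_{-d_1}.X_{d_1}=e$ (the corollary to Proposition \ref{t2.1}) rearranges this to
\[
X_{d_1}.Y_{d_2}=[X,Y]_{d_1d_2}.Y_{d_2}.X_{d_1}.
\]
It therefore suffices to show that $[X,Y]_{d_1d_2}$ commutes with each of $X_{d_1}$ and $Y_{d_2}$ separately, so that it slides freely past the product $Y_{d_2}.X_{d_1}$.

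For the commutation with $X_{d_1}$, I re-apply Theorem \ref{t2.3}, now to the pair $X,[X,Y]$ at the infinitesimals $(d_1,d_1d_2)$ (both of which lie in $D$): this yields
\[
X_{d_1}.[X,Y]_{d_1d_2}.X_{-d_1}.[X,Y]_{-d_1d_2}=[X,[X,Y]]_{d_1\cdot d_1d_2}=[X,[X,Y]]_0=e,
\]
since $d_1^2d_2=0$ and $Z_0=e$ for every $Z\in\mathfrak{g}$. Hence $X_{d_1}$ and $[X,Y]_{d_1d_2}$ commute; the symmetric argument with $Y$ in place of $X$ (using $d_2^2=0$) handles $Y_{d_2}$. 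Combining these commutations with the previous display completes the derivation.

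I foresee no substantive obstacle. The whole argument is essentially bookkeeping about which products of infinitesimals remain in $D$, and this is controlled throughout by $d_1^2=d_2^2=0$; the one place to take a little care is checking that the instance $(X,[X,Y])$ of Theorem \ref{t2.3} is legitimate with the second infinitesimal equal to $d_1d_2$, which is already verified in the first paragraph.
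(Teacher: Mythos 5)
Your proof is correct, but it takes a genuinely different route from the paper. The paper stays inside its logarithmic-derivative toolkit: it expands $\exp\,(d_{1}X+d_{2}Y)$ by left logarithmic derivation, uses Theorem \ref{t5.2} to get $\exp\,d_{1}X.\exp\,d_{2}Y.\exp\,-\tfrac{1}{2}d_{1}d_{2}\left[ X,Y\right]$, repeats the computation with $X$ and $Y$ (and $d_{1},d_{2}$) interchanged, and then compares the two expressions using Proposition \ref{t5.3}. You instead bypass \S 5 entirely: via Proposition \ref{t4.5} you reduce the claim to the group identity $X_{d_{1}}.Y_{d_{2}}=Y_{d_{2}}.X_{d_{1}}.\left[ X,Y\right]_{d_{1}d_{2}}$, obtain $X_{d_{1}}.Y_{d_{2}}=\left[ X,Y\right]_{d_{1}d_{2}}.Y_{d_{2}}.X_{d_{1}}$ from the commutator characterization of the bracket (Theorem \ref{t2.3}), and then move the bracket factor across by applying Theorem \ref{t2.3} again to the pairs $\left( X,\left[ X,Y\right] \right)$ at $\left( d_{1},d_{1}d_{2}\right)$ and $\left( Y,\left[ X,Y\right] \right)$ at $\left( d_{2},d_{1}d_{2}\right)$, where the products $d_{1}^{2}d_{2}$ and $d_{1}d_{2}^{2}$ vanish; all the infinitesimal bookkeeping ($d_{1}d_{2}\in D$, $Z_{0}=e$, the cancellation corollary to Proposition \ref{t2.1}) checks out. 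What your approach buys is economy of means: it needs only the \S 2 material plus Proposition \ref{t4.5}, and in particular avoids Theorem \ref{t5.2}, whose proof rests on solving an ODE in $L\left( \mathfrak{g},\mathfrak{g}\right)$. What the paper's approach buys is uniformity: the same logarithmic-derivation pattern is the engine of all the later, heavier computations in \S\S 6--8, so Proposition \ref{t5.4} serves there as a first instance of that method. (As a small remark, the final commutation step could equally be extracted from Proposition \ref{t5.3}, since $\left[ d_{1}X,d_{1}d_{2}\left[ X,Y\right] \right] =0$, but your direct use of Theorem \ref{t2.3} is if anything more elementary.)
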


\begin{proof}
we have
\begin{align*}
&  \exp\,d_{1}X+d_{2}Y\\
&  =\exp\,d_{1}X.\left\{  \delta^{\mathrm{left}}\left(  \exp\right)  \left(
d_{1}X\right)  \left(  Y\right)  \right\}  _{d_{2}}\\
&  \left)  \text{logarithmic derivation}\right( \\
&  =\exp\,d_{1}X.\left\{  Y-\frac{1}{2}d_{1}\left[  X,Y\right]  \right\}
_{d_{2}}\\
&  \left)  \text{By Theorem \ref{t5.2}}\right( \\
&  =\exp\,d_{1}X.Y_{d_{2}}.\left(  -\frac{1}{2}d_{1}\left[  X,Y\right]
\right)  _{d_{2}}\\
&  \left)  \text{By Proposition \ref{t2.2}}\right( \\
&  =\exp\,d_{1}X.\exp\,d_{2}Y.\exp\,-\frac{1}{2}d_{1}d_{2}\left[  X,Y\right]
\\
&  \left)  \text{By Proposition \ref{t4.5}}\right(  \text{,}%
\end{align*}
while we have
\begin{align*}
&  \exp\,d_{1}X+d_{2}Y\\
&  =\exp\,d_{2}Y+d_{1}X\\
&  =\exp\,d_{2}Y.\exp\,d_{1}X.\exp\,-\frac{1}{2}d_{1}d_{2}\left[  Y,X\right]
\end{align*}
by the same token. Therefore we have
\begin{align*}
&  \exp\,d_{1}X.\exp\,d_{2}Y.\exp\,-\frac{1}{2}d_{1}d_{2}\left[  X,Y\right] \\
&  =\exp\,d_{2}Y.\exp\,d_{1}X.\exp\,-\frac{1}{2}d_{1}d_{2}\left[  Y,X\right]
\end{align*}
By multiplying
\[
\exp\,\frac{1}{2}d_{1}d_{2}\left[  X,Y\right]
\]
from the right and making use of Proposition \ref{t5.3}, we get the desired formula.
\end{proof}

Now we deal with the right derivation.

\begin{definition}
Given a microlinear space $M$\ and a function $f:M\rightarrow G$, the
function
\[
\delta^{\mathrm{right}}f:\mathbf{T}M\rightarrow\mathfrak{g}%
\]
is defined to be such that
\[
\left(  \mathbf{d}f\left(  X\right)  \right)  _{d}=\left(  \delta
^{\mathrm{right}}f\left(  X\right)  \right)  _{d}.f\left(  x\right)
\]
for any $x\in M$, any $X\in\mathbf{T}_{x}M$ and any $d\in D$. It is called the
right logarithmic derivative of $f$. The restriction of $\delta
^{\mathrm{right}}f$ to $\mathbf{T}_{x}M$\ is denoted by $\delta
^{\mathrm{right}}f\left(  x\right)  $.
\end{definition}

\begin{proposition}
\label{t5.5}Let $M$\ be a microlinear space. Given two functions
\[
f,g:M\rightarrow G
\]
together with $X\in\mathbf{T}M$, we have
\[
\delta^{\mathrm{right}}\left(  fg\right)  \left(  X\right)  =\delta
^{\mathrm{right}}f\left(  X\right)  +\mathrm{Ad\,}\left(  f(x)\right)  \left(
\delta^{\mathrm{right}}g\left(  X\right)  \right)
\]
with
\[
x=X_{0}%
\]

\end{proposition}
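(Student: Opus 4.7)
The plan is to mimic the proof of Proposition \ref{t5.1}, but with the sides swapped: where the left version absorbs an inverse factor on the left via $\mathrm{Ad}(g(x)^{-1})$, the right version will require conjugating by $f(x)$ from the left in order to slide a factor across. The key algebraic fact I will need is that, for $Y\in\mathfrak{g}$ and $x\in G$, the definition of $\mathrm{Ad}$ gives $x.Y_{d}=\left(\mathrm{Ad}\,x\,(Y)\right)_{d}.x$ for every $d\in D$, and I will also need Proposition \ref{t2.2} in order to rewrite a product of two $\mathfrak{g}$-level infinitesimals as an infinitesimal of a sum.

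First I unwind the definition: for $d\in D$, I write
\[
\left(\delta^{\mathrm{right}}(fg)(X)\right)_{d}.f(x).g(x)=f(X_{d}).g(X_{d})
\]
and then insert the definitions of $\delta^{\mathrm{right}}f$ and $\delta^{\mathrm{right}}g$ on the right-hand side so as to get
\[
\left(\delta^{\mathrm{right}}f(X)\right)_{d}.f(x).\left(\delta^{\mathrm{right}}g(X)\right)_{d}.g(x).
\]
The heart of the computation is to slide $\left(\delta^{\mathrm{right}}g(X)\right)_{d}$ past $f(x)$ using the adjoint identity above, which produces
\[
\left(\delta^{\mathrm{right}}f(X)\right)_{d}.\left(\mathrm{Ad}\,(f(x))\left(\delta^{\mathrm{right}}g(X)\right)\right)_{d}.f(x).g(x).
\]
Applying Proposition \ref{t2.2} to combine the two $\mathfrak{g}$-level infinitesimals into $\left(\delta^{\mathrm{right}}f(X)+\mathrm{Ad}\,(f(x))(\delta^{\mathrm{right}}g(X))\right)_{d}$ and cancelling $f(x).g(x)$ on the right produces the claim for every $d\in D$, which by microlinearity (or the uniqueness built into the definition of $\delta^{\mathrm{right}}$) yields the desired equation in $\mathfrak{g}$.

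There is essentially no obstacle of principle here: the proof is forced once one recognises that the asymmetry between the left and right Leibniz rules is precisely accounted for by whether one conjugates the ``other'' factor by $g(x)^{-1}$ from the right (as in Proposition \ref{t5.1}) or by $f(x)$ from the left. The only care required is to keep the order of the four group elements straight when applying the $\mathrm{Ad}$-identity, and to verify that the final appeal to Proposition \ref{t2.2} is legitimate, i.e.\ that the two infinitesimals being concatenated both come from $\mathfrak{g}$ and share the same parameter $d$; both are the case by construction.
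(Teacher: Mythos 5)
Your proof is correct and follows exactly the argument the paper intends: it is the mirror image of the paper's proof of Proposition \ref{t5.1} (the paper omits the proof of Proposition \ref{t5.5} as analogous), unwinding the definition of $\delta^{\mathrm{right}}$, sliding the factor past $f(x)$ via the identity $\left(\mathrm{Ad}\,x\,(Y)\right)_{d}=x.Y_{d}.x^{-1}$, and recombining with Proposition \ref{t2.2}. The only cosmetic remark is that the final appeal to microlinearity is unnecessary, since two elements of $\mathfrak{g}$ agreeing at every $d\in D$ are already equal as maps $D\rightarrow G$.
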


\begin{theorem}
\label{t5.6}Given $X\in\mathfrak{g}$\ with $\left(  \mathrm{ad}\,X\right)
^{n+1}$ vanishing for some natural number $n$, we have
\[
\delta^{\mathrm{right}}\left(  \exp\right)  \left(  X\right)  =\sum_{p=0}%
^{n}\frac{1}{\left(  p+1\right)  !}\left(  \mathrm{ad}\,X\right)  ^{p}%
\]

\end{theorem}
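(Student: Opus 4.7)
The plan is to closely mimic the proof of Theorem \ref{t5.2}, trading $\delta^{\mathrm{left}}$ for $\delta^{\mathrm{right}}$ and Proposition \ref{t5.1} for Proposition \ref{t5.5}. Starting from the identity $\exp\left((s+t)X\right) = \exp sX \cdot \exp tX$, which holds because $sX$ and $tX$ commute (Proposition \ref{t5.3}), I apply the chain rule on the left side and Proposition \ref{t5.5} on the right; setting $F(s) = s\,\delta^{\mathrm{right}}(\exp)(sX)$, regarded as a function $\mathbb{R} \to L(\mathfrak{g},\mathfrak{g})$, this produces the functional equation
\[
F(s+t) = F(s) + \mathrm{Ad}\,(\exp sX) \circ F(t).
\]

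Differentiating with respect to $t$ at $t = 0$ with $s$ held fixed yields $F'(s) = \mathrm{Ad}\,(\exp sX) \circ F'(0)$. The initial value $F'(0) = \delta^{\mathrm{right}}(\exp)(0) = \mathrm{id}_{\mathfrak{g}}$ follows from Proposition \ref{t4.5}, since $\exp dY = Y_{d}$ says exactly that the right logarithmic derivative of $\exp$ at the origin is the identity. Invoking Proposition \ref{t4.4} to substitute $\mathrm{Ad}\,(\exp sX) = e^{s\,\mathrm{ad}\,X}$, the equation collapses into the explicit linear ODE
\[
F'(s) = \sum_{p=0}^{n}\frac{s^{p}}{p!}(\mathrm{ad}\,X)^{p},
\]
whose unique solution satisfying $F(0) = 0$ is $F(s) = \sum_{p=0}^{n}\frac{s^{p+1}}{(p+1)!}(\mathrm{ad}\,X)^{p}$; specializing to $s = 1$ delivers the claimed formula.

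The one point that requires care is the sign bookkeeping compared with Theorem \ref{t5.2}. There, Proposition \ref{t5.1} introduces $\mathrm{Ad}\,(g(x)^{-1})$, producing $-\mathrm{ad}\,X$ in the ODE and hence the alternating signs; here Proposition \ref{t5.5} contributes $\mathrm{Ad}\,(f(x))$ with no inverse, producing $+\mathrm{ad}\,X$ and the all-positive coefficients we want. A shorter alternative route would be to exploit the tautology $\delta^{\mathrm{right}}f(X) = \mathrm{Ad}\,(f(x)) \circ \delta^{\mathrm{left}}f(X)$ that is immediate from comparing the two definitions, thereby reducing the statement to Theorem \ref{t5.2} combined with the formal identity $e^{A}(1 - e^{-A})/A = (e^{A}-1)/A$; this is quicker but is less in the spirit of treating the right logarithmic derivative on its own terms, which is evidently what the author wants in this section.
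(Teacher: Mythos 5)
Your proof is correct and is exactly the argument the paper intends: Theorem \ref{t5.6} is stated without proof, the mirror of Theorem \ref{t5.2} (with Proposition \ref{t5.5} in place of Proposition \ref{t5.1}) being left implicit, and your functional equation $F(s+t)=F(s)+\mathrm{Ad}\,\left(\exp sX\right)\circ F(t)$, the initial condition $F^{\prime}(0)=\mathrm{id}_{\mathfrak{g}}$, and the resulting ODE with all-positive coefficients are all right. The sign bookkeeping you flag (Proposition \ref{t5.5} contributing $\mathrm{Ad}\,(f(x))$ with no inverse, hence $+\mathrm{ad}\,X$) is precisely the point of difference from the left-handed case, and your shorter alternative via $\delta^{\mathrm{right}}f(X)=\mathrm{Ad}\,(f(x))\circ\delta^{\mathrm{left}}f(X)$ is also valid.
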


\section{\label{s6}The Zassenhaus Formula}

\begin{lemma}
\label{t6.0}Given $d_{1},...d_{n}\in D$, we have
\[
\frac{\left(  d_{1}+...+d_{n}\right)  ^{m}}{m!}=\sum_{i_{1}<...<i_{m}}%
d_{i_{1}}...d_{i_{m}}%
\]
for any natural number $m$\ with $m\leq n$.
\end{lemma}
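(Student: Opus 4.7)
The plan is to apply the multinomial theorem to $(d_{1}+\cdots+d_{n})^{m}$ and then eliminate all terms that vanish by virtue of the defining property $d^{2}=0$ of $D$. Explicitly, one writes
\[
(d_{1}+\cdots+d_{n})^{m}=\sum_{k_{1}+\cdots+k_{n}=m}\frac{m!}{k_{1}!\cdots k_{n}!}\,d_{1}^{k_{1}}\cdots d_{n}^{k_{n}}.
\]
Since each $d_{i}\in D$ satisfies $d_{i}^{2}=0$, every multi-index $(k_{1},\ldots,k_{n})$ with some $k_{i}\geq 2$ contributes zero. The surviving tuples are precisely those with $k_{i}\in\{0,1\}$ for all $i$ and exactly $m$ ones. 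Such tuples correspond bijectively to the $m$-element subsets $\{i_{1}<\cdots<i_{m}\}\subseteq\{1,\ldots,n\}$, and the hypothesis $m\leq n$ guarantees that such subsets exist. For each of them the multinomial coefficient is $m!/(1!)^{m}(0!)^{n-m}=m!$. Summing the surviving terms and dividing by $m!$ yields the asserted identity.

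I do not expect any real obstacle. All the arithmetic takes place inside the commutative line object $R$, so the multinomial expansion is legitimate as in ordinary ring theory, and the nilpotency condition does all of the combinatorial bookkeeping. An equivalent route would be induction on $n$, peeling off the final factor via $(d_{1}+\cdots+d_{n})^{m}=\sum_{k=0}^{m}\binom{m}{k}(d_{1}+\cdots+d_{n-1})^{m-k}d_{n}^{k}$ and observing that only $k=0,1$ contribute (since $d_{n}^{2}=0$); but this merely repackages the same counting argument, so the direct multinomial expansion is the cleaner presentation.
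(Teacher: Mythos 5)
Your proof is correct and is essentially the standard argument: the paper itself gives no proof but simply cites the Lemma on p.~10 of Lavendhomme, whose proof is the same expansion-and-counting of square-free monomials that you carry out (each surviving monomial $d_{i_1}\cdots d_{i_m}$ occurring with coefficient $m!$). The only point worth making explicit is that dividing by $m!$ uses the standing assumption in this setting that the line object is a $\mathbb{Q}$-algebra, which the statement's left-hand side already presupposes.
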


\begin{proof}
The reader is referred to Lemma (p.10) of \cite{la}.
\end{proof}

\begin{theorem}
\label{t6.1}Given $X,Y\in\mathfrak{g}$\ and $d_{1}\in D$, we have
\begin{align*}
&  \exp\,d_{1}\left(  X+Y\right) \\
&  =\exp\,d_{1}X.\exp\,d_{1}Y
\end{align*}

\end{theorem}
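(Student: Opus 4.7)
The plan is to observe that this first-order Zassenhaus identity collapses to a direct consequence of material already established in \S\ref{s2} and \S\ref{s4}. Since $d_{1}\in D$, I will reduce all three exponentials to the primitive notation $Z_{d_{1}}$ introduced in \S\ref{s2}.

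First I would invoke Proposition \ref{t4.5} with $t=1$, which gives the identifications
\[
\exp\,d_{1}\left(X+Y\right)=\left(X+Y\right)_{d_{1}},\qquad \exp\,d_{1}X=X_{d_{1}},\qquad \exp\,d_{1}Y=Y_{d_{1}}.
\]
Note that Proposition \ref{t4.5} is applicable here because $d_{1}X$, $d_{1}Y$, and $d_{1}(X+Y)$ all lie in the image of the map $d\mapsto dZ$ for appropriate $Z\in\mathfrak{g}$, so no hypothesis about the nilpotency of $\mathrm{ad}$ is required.

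Next I would apply Proposition \ref{t2.2}, which asserts $(X+Y)_{d_{1}}=X_{d_{1}}.Y_{d_{1}}$ (using here the crucial fact that $d_{1}\in D$, so that the proposition applies). Combining with the identifications above yields
\[
\exp\,d_{1}(X+Y)=(X+Y)_{d_{1}}=X_{d_{1}}.Y_{d_{1}}=\exp\,d_{1}X.\exp\,d_{1}Y,
\]
which is the desired formula.

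There is no real obstacle: the theorem is the first-order case of the Zassenhaus formula, and at first order the bracket term $d_{1}d_{2}[X,Y]$ appearing in Proposition \ref{t5.4} does not yet manifest because only a single infinitesimal $d_{1}$ is in play. The substantive content lies in the higher-order refinements that presumably follow in \S\ref{s6}; the present statement is essentially a restatement of Proposition \ref{t2.2} transported through the exponential via Proposition \ref{t4.5}.
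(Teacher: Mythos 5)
Your proposal is correct and coincides with the paper's own proof: the paper likewise writes $\exp\,d_{1}\left(X+Y\right)=\left(X+Y\right)_{d_{1}}=X_{d_{1}}.Y_{d_{1}}=\exp\,d_{1}X.\exp\,d_{1}Y$, citing Proposition \ref{t4.5} and Proposition \ref{t2.2} exactly as you do.
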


\begin{proof}
We have
\begin{align*}
&  \exp d_{1}\,\left(  X+Y\right) \\
&  =\left(  X+Y\right)  _{d_{1}}\\
&  \left)  \text{By Proposition \ref{t4.5}}\right( \\
&  =X_{d_{1}}.Y_{d_{1}}\\
&  \left)  \text{By Proposition \ref{t2.2}}\right( \\
&  =\exp\,d_{1}X.\exp\,d_{1}Y\\
&  \left)  \text{By Proposition \ref{t4.5}}\right(
\end{align*}
so that we have got to the desired formula.
\end{proof}

\begin{theorem}
\label{t6.2}Given $X,Y\in\mathfrak{g}$\ and $d_{1},d_{2}\in D$, we have
\begin{align*}
&  \exp\,\left(  d_{1}+d_{2}\right)  \left(  X+Y\right) \\
&  =\exp\,\left(  d_{1}+d_{2}\right)  X.\exp\,\left(  d_{1}+d_{2}\right)
Y.\exp\,-d_{1}d_{2}\left[  X,Y\right] \\
&  =\exp\,\left(  d_{1}+d_{2}\right)  X.\exp\,\left(  d_{1}+d_{2}\right)
Y.\exp\,-\frac{\left(  d_{1}+d_{2}\right)  ^{2}}{2}\left[  X,Y\right]
\end{align*}

\end{theorem}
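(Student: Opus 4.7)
The plan is to expand both sides as products of four exponentials of the form $\exp d_i Z$ with $Z\in\{X,Y\}$ and $i\in\{1,2\}$, and then reduce the desired equality to a single application of Proposition \ref{t5.4}.

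First I would note that Proposition \ref{t5.3} applies to $d_1(X+Y)$ and $d_2(X+Y)$, since $[d_1(X+Y),d_2(X+Y)]=d_1d_2[X+Y,X+Y]=0$. Hence
\[
\exp(d_1+d_2)(X+Y)=\exp d_1(X+Y)\cdot\exp d_2(X+Y),
\]
and Theorem \ref{t6.1} factors each factor further, yielding the four-fold product
\[
\exp(d_1+d_2)(X+Y)=\exp d_1 X\cdot\exp d_1 Y\cdot\exp d_2 X\cdot\exp d_2 Y.
\]
By the same reasoning (using $[d_1 X,d_2 X]=0$ and $[d_1 Y,d_2 Y]=0$),
\[
\exp(d_1+d_2)X\cdot\exp(d_1+d_2)Y=\exp d_1 X\cdot\exp d_2 X\cdot\exp d_1 Y\cdot\exp d_2 Y.
\]

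Comparing the two four-fold products, the first asserted equality reduces to the commutation
\[
\exp d_1 Y\cdot\exp d_2 X=\exp d_2 X\cdot\exp d_1 Y\cdot\exp(-d_1d_2[X,Y]),
\]
which is exactly Proposition \ref{t5.4} with the roles of $X$ and $Y$ exchanged (together with $[Y,X]=-[X,Y]$). After performing this swap in the middle of the four-fold product, I would slide $\exp(-d_1d_2[X,Y])$ past $\exp d_2 Y$ to the right end, which is legitimate by Proposition \ref{t5.3} because $[-d_1d_2[X,Y],d_2Y]=-d_1d_2^{2}[[X,Y],Y]=0$ since $d_2^{2}=0$.

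The second asserted equality is immediate: since $d_1,d_2\in D$ we have $d_1^{2}=d_2^{2}=0$, so $(d_1+d_2)^{2}=2d_1d_2$ in $\mathbb{R}$, whence $\tfrac{(d_1+d_2)^{2}}{2}=d_1d_2$ and the two expressions on the right-hand side agree. The main obstacle is not conceptual but purely bookkeeping: at each invocation of Proposition \ref{t5.3} one must verify that the relevant Lie bracket vanishes, carefully distinguishing the micro-quantities with nontrivial product $d_1d_2\neq0$ from those whose higher products such as $d_1d_2^{2}$ do vanish.
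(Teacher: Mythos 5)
Your proof is correct and follows essentially the same route as the paper: reduce everything to the four-fold product $\exp d_{1}X.\exp d_{1}Y.\exp d_{2}X.\exp d_{2}Y$, swap the middle pair via Proposition \ref{t5.4}, and slide the resulting factor $\exp\,-d_{1}d_{2}\left[X,Y\right]$ to the right end via Proposition \ref{t5.3}, using $d_{2}^{2}=0$. The only (harmless) deviation is that you obtain the initial splitting $\exp\left(d_{1}+d_{2}\right)\left(X+Y\right)=\exp d_{1}\left(X+Y\right).\exp d_{2}\left(X+Y\right)$ directly from Proposition \ref{t5.3}, whereas the paper's proof gets it by left logarithmic derivation together with Theorem \ref{t5.2} and Proposition \ref{t4.5}.
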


\begin{proof}
We have
\begin{align*}
&  \exp\,\left(  d_{1}+d_{2}\right)  \left(  X+Y\right) \\
&  =\exp\,d_{1}\left(  X+Y\right)  +d_{2}\left(  X+Y\right) \\
&  =\exp\,d_{1}\left(  X+Y\right)  .\left\{  \delta^{\mathrm{left}}\left(
\exp\right)  \left(  d_{1}\left(  X+Y\right)  \right)  \left(  X+Y\right)
\right\}  _{d_{2}}\\
&  \left)  \text{left logarithmic derivation}\right( \\
&  =\exp\,d_{1}\left(  X+Y\right)  .\left(  X+Y\right)  _{d_{2}}\\
&  \left)  \text{By Theorem \ref{t5.2}}\right( \\
&  =\exp\,d_{1}\left(  X+Y\right)  .\exp\,d_{2}\left(  X+Y\right) \\
&  \text{[By Proposition \ref{t4.5}]}\\
&  =\exp\,d_{1}X.\exp\,d_{1}Y.\exp\,d_{2}X.\exp\,d_{2}Y\\
&  \left)  \text{By Theorem \ref{t6.1}}\right( \\
&  =\exp\,d_{1}X.\exp\,d_{2}X.\exp\,d_{1}Y.\exp\,d_{1}d_{2}\left[  Y,X\right]
.\exp\,d_{2}Y\\
&  \left)  \text{By Proposition \ref{t5.4}}\right( \\
&  =\exp\,d_{1}X.\exp\,d_{2}X.\exp\,d_{1}Y.\exp\,d_{2}Y.\exp\,d_{1}d_{2}
\left[  Y,X\right] \\
&  \left)  \text{By Proposition \ref{t5.3}}\right( \\
&  =\exp\,\left(  d_{1}+d_{2}\right)  X.\exp\,\left(  d_{1}+d_{2}\right)
Y.\exp\,d_{1}d_{2}\left[  Y,X\right] \\
&  \left)  \text{By Proposition \ref{t5.3}}\right( \\
&  =\exp\,\left(  d_{1}+d_{2}\right)  X.\exp\,\left(  d_{1}+d_{2}\right)
Y.\exp\,-d_{1}d_{2}\left[  X,Y\right]
\end{align*}
so that we have got to the desired formula.
\end{proof}

\begin{theorem}
\label{t6.3}Given $X,Y\in\mathfrak{g}$\ and $d_{1},d_{2},d_{3}\in D$, we have
\begin{align*}
&  \exp\,\left(  d_{1}+d_{2}+d_{3}\right)  \left(  X+Y\right) \\
&  =\exp\,\left(  d_{1}+d_{2}+d_{3}\right)  X.\exp\,\left(  d_{1}+d_{2}%
+d_{3}\right)  Y.\exp\,-\left(  d_{1}d_{2}+d_{1}d_{3}+d_{2}d_{3}\right)
\left[  X,Y\right]  .\\
&  \exp\,d_{1}d_{2}d_{3}\left[  X+2Y,\left[  X,Y\right]  \right] \\
&  =\exp\,\left(  d_{1}+d_{2}+d_{3}\right)  X.\exp\,\left(  d_{1}+d_{2}%
+d_{3}\right)  Y.\exp\,-\frac{\left(  d_{1}+d_{2}+d_{3}\right)  ^{2}}%
{2}\left[  X,Y\right]  .\\
&  \exp\,\frac{\left(  d_{1}+d_{2}+d_{3}\right)  ^{3}}{12}\left[  X+2Y,\left[
X,Y\right]  \right]
\end{align*}

\end{theorem}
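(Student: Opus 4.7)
The plan is to mirror the strategy of Theorem \ref{t6.2} but one order higher in the infinitesimals. The starting point is
$$\exp\,(d_{1}+d_{2}+d_{3})(X+Y) = \exp(d_{1}X)\cdot\exp(d_{1}Y)\cdot\exp(d_{2}X)\cdot\exp(d_{2}Y)\cdot\exp(d_{3}X)\cdot\exp(d_{3}Y),$$
obtained by repeated application of Proposition \ref{t5.3} (since $[d_{i}(X+Y),d_{j}(X+Y)]=d_{i}d_{j}[X+Y,X+Y]=0$) followed by Theorem \ref{t6.1} on each $\exp(d_{i}(X+Y))$.

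The main task is to permute this six-fold product into the form $\exp((d_{1}+d_{2}+d_{3})X)\cdot\exp((d_{1}+d_{2}+d_{3})Y)\cdot(\text{quadratic residual})\cdot(\text{cubic residual})$ by bubbling every $\exp(d_{j}X)$ leftward past every preceding $\exp(d_{i}Y)$ via Proposition \ref{t5.4}. Each of the three such adjacent swaps leaves behind a residual $\exp(-d_{i}d_{j}[X,Y])$, which together give $\exp(-(d_{1}d_{2}+d_{1}d_{3}+d_{2}d_{3})[X,Y])$. However, each quadratic residual, once created, must itself be pushed to the right past any remaining $\exp(d_{k}X)$ or $\exp(d_{k}Y)$, and each such further swap generates (again via Proposition \ref{t5.4}) a cubic residual $\exp(\pm d_{i}d_{j}d_{k}[Z,[X,Y]])$ with $Z\in\{X,Y\}$. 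Careful tracking reveals exactly three nontrivial cubic contributions: two of the form $\exp(d_{1}d_{2}d_{3}[Y,[X,Y]])$ and one of the form $\exp(d_{1}d_{2}d_{3}[X,[X,Y]])$, since every other propagation acquires a factor $d_{k}^{2}=0$ and is therefore trivial.

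The decisive observation is that all the quadratic and cubic residuals commute pairwise, because the product of any two of their infinitesimal coefficients is divisible by some $d_{k}^{2}$ and hence vanishes. Proposition \ref{t5.3} then permits me to collect them into a single quadratic factor $\exp(-(d_{1}d_{2}+d_{1}d_{3}+d_{2}d_{3})[X,Y])$ and a single cubic factor $\exp(d_{1}d_{2}d_{3}(2[Y,[X,Y]]+[X,[X,Y]]))=\exp(d_{1}d_{2}d_{3}[X+2Y,[X,Y]])$. Merging $\exp(d_{1}X)\cdot\exp(d_{2}X)\cdot\exp(d_{3}X)=\exp((d_{1}+d_{2}+d_{3})X)$ and similarly for $Y$ (again by Proposition \ref{t5.3}) yields the first displayed form, and the second form then follows at once from the identities provided by Lemma \ref{t6.0} for the elementary symmetric polynomials in $d_{1},d_{2},d_{3}$.

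The main obstacle will be disciplined bookkeeping: every commutator created during a swap must be tracked through all subsequent swaps, and at each stage one must verify that it commutes with everything in its path so that no further cascade is generated. Once the nilpotency relations $d_{k}^{2}=0$ have been systematically exploited, all the bookkeeping collapses and only the three cubic contributions identified above survive.
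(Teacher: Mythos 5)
Your derivation of the first displayed equality is correct, and in substance it is the same computation as the paper's. The paper peels off the last infinitesimal with the left logarithmic derivative (Theorem \ref{t5.2} together with Proposition \ref{t4.5}) and invokes Theorem \ref{t6.2}, then does exactly the bubbling-and-bookkeeping you describe via Propositions \ref{t5.3} and \ref{t5.4}; you instead split $\exp\,(d_{1}+d_{2}+d_{3})(X+Y)$ directly into $\exp\,d_{1}(X+Y).\exp\,d_{2}(X+Y).\exp\,d_{3}(X+Y)$ by Proposition \ref{t5.3} (legitimate, since the relevant brackets vanish; the paper does the same kind of splitting in \S\ref{s8}) and then apply Theorem \ref{t6.1} three times. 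This spares you Theorem \ref{t5.2} and the induction through Theorem \ref{t6.2}, but the engine is identical. Your residual count checks out: three quadratic residuals $\exp\,-d_{i}d_{j}\left[ X,Y\right]$, and exactly three surviving cubic residuals, one $\exp\,d_{1}d_{2}d_{3}\left[ X,\left[ X,Y\right] \right]$ and two $\exp\,d_{1}d_{2}d_{3}\left[ Y,\left[ X,Y\right] \right]$, every other propagation dying on a repeated infinitesimal; collecting them gives $\exp\,d_{1}d_{2}d_{3}\left[ X+2Y,\left[ X,Y\right] \right]$, as required.

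Your final sentence, however, does not go through as stated. Lemma \ref{t6.0} with $m=n=3$ gives $d_{1}d_{2}d_{3}=\frac{\left( d_{1}+d_{2}+d_{3}\right) ^{3}}{6}$, whereas the theorem's second display carries the coefficient $\frac{\left( d_{1}+d_{2}+d_{3}\right) ^{3}}{12}=\frac{1}{2}d_{1}d_{2}d_{3}$; so the second display is not a consequence of the first via Lemma \ref{t6.0} --- as printed, the two displays differ by a factor $2$ in the cubic term. (The $\frac{1}{12}$ is the BCH coefficient of Theorems \ref{t7.3} and \ref{t8.3}, where the corresponding first-form coefficient is $\frac{1}{2}d_{1}d_{2}d_{3}$ and the conversion does work; in the Zassenhaus case here the correct conversion is to $\frac{\left( d_{1}+d_{2}+d_{3}\right) ^{3}}{6}$.) The paper's own proof likewise stops at the first display, so the discrepancy lies in the statement rather than in your computation; but you should either write the second form with denominator $6$, or explicitly flag the apparent misprint, rather than assert that the printed second form ``follows at once'' from Lemma \ref{t6.0}.
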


\begin{proof}
We have
\begin{align*}
&  \exp\,\left(  d_{1}+d_{2}+d_{3}\right)  \left(  X+Y\right) \\
&  =\exp\,\left(  d_{1}+d_{2}\right)  \left(  X+Y\right)  +d_{3}\left(
X+Y\right) \\
&  =\exp\,\left(  d_{1}+d_{2}\right)  \left(  X+Y\right)  .\left\{
\delta^{\mathrm{left}}\left(  \exp\right)  \left(  \left(  d_{1}+d_{2}\right)
\left(  X+Y\right)  \right)  \left(  X+Y\right)  \right\}  _{d_{3}}\\
&  \left)  \text{left logarithmic derivation}\right( \\
&  =\exp\,\left(  d_{1}+d_{2}\right)  \left(  X+Y\right)  .\left(  X+Y\right)
_{d_{3}}\\
&  \left)  \text{By Theorem \ref{t5.2}}\right( \\
&  =\exp\,\left(  d_{1}+d_{2}\right)  \left(  X+Y\right)  .\exp\,d_{3}\left(
X+Y\right) \\
&  \left)  \text{By Proposition \ref{t4.5}}\right( \\
&  =\exp\,\left(  d_{1}+d_{2}\right)  X.\exp\,\left(  d_{1}+d_{2}\right)
Y.\exp\,-d_{1}d_{2}\left[  X,Y\right]  .\exp\,d_{3}X.\exp\,d_{3}Y\\
&  \left)  \text{By Theorems \ref{t6.1} and \ref{t6.2}}\right( \\
&  =\exp\,\left(  d_{1}+d_{2}\right)  X.\exp\,d_{1}Y.\exp\,d_{2}Y.\exp
\,-d_{1}d_{2}\left[  X,Y\right]  .\exp\,d_{3}X.\exp\,d_{3}Y\\
&  =\exp\,\left(  d_{1}+d_{2}+d_{3}\right)  X.\exp\,d_{1}Y.\exp\,d_{1}d_{3}
\left[  Y,X\right]  .\exp\,d_{2}Y.\exp\,d_{2}d_{3}\left[  Y,X\right]  .\\
&  \exp\,-d_{1}d_{2}\left[  X,Y\right]  .\exp\,-d_{1}d_{2}d_{3}\left[  \left[
X,Y\right]  ,X\right]  .\exp\,d_{3}Y\\
&  \left)
\begin{array}
[c]{c}%
\text{By moving }\exp\,d_{3}X\text{\ left towards }\exp\,\left(  d_{1}%
+d_{2}\right)  X\text{ via Propositions \ref{t5.3} and }\\
\text{\ref{t5.4}}%
\end{array}
\right( \\
&  =\exp\,\left(  d_{1}+d_{2}+d_{3}\right)  X.\exp\,\left(  d_{1}%
+d_{2}\right)  Y.\exp\,d_{1}d_{3}\left[  Y,X\right]  .\exp\,d_{1}d_{2}%
d_{3}\left[  \left[  Y,X\right]  ,Y\right]  .\\
&  \exp\,d_{2}d_{3}\left[  Y,X\right]  .\exp\,-d_{1}d_{2}\left[  X,Y\right]
.\exp\,-d_{1}d_{2}d_{3}\left[  \left[  X,Y\right]  ,X\right]  .\exp\,d_{3}Y\\
&  \left)  \text{By exchanging }\exp\,d_{1}d_{3}\left[  Y,X\right]
\text{\ and }\exp\,d_{2}Y\text{ via Proposition \ref{t5.4}}\right( \\
&  =\exp\,\left(  d_{1}+d_{2}+d_{3}\right)  X.\exp\,\left(  d_{1}+d_{2}%
+d_{3}\right)  Y.\exp\,d_{1}d_{3}\left[  Y,X\right]  .\exp\,d_{1}d_{2}%
d_{3}\left[  \left[  Y,X\right]  ,Y\right]  .\\
&  \exp\,d_{2}d_{3}\left[  Y,X\right]  .\exp\,-d_{1}d_{2}\left[  X,Y\right]
.\exp\,-d_{1}d_{2}d_{3}\left[  \left[  X,Y\right]  ,Y\right]  .\exp
\,-d_{1}d_{2}d_{3}\left[  \left[  X,Y\right]  ,X\right] \\
&  \left)
\begin{array}
[c]{c}%
\text{By moving }\exp\,d_{3}Y\text{\ left towards }\exp\,\left(  d_{1}%
+d_{2}\right)  Y\text{ via Propositions \ref{t5.3} and }\\
\text{\ref{t5.4}}%
\end{array}
\right( \\
&  =\exp\,\left(  d_{1}+d_{2}+d_{3}\right)  X.\exp\,\left(  d_{1}+d_{2}%
+d_{3}\right)  Y.\exp\,-\left(  d_{1}d_{2}+d_{1}d_{3}+d_{2}d_{3}\right)
\left[  X,Y\right]  .\\
&  \exp\,d_{1}d_{2}d_{3}\left[  X+2Y,\left[  X,Y\right]  \right]
\end{align*}
so that we have got to the desired formula.
\end{proof}

\begin{theorem}
\label{t6.4}Given $X,Y\in\mathfrak{g}$\ and $d_{1},d_{2},d_{3},d_{4}\in D$, we
have
\begin{align*}
&  \exp\,\left(  d_{1}+d_{2}+d_{3}+d_{4}\right)  \left(  X+Y\right) \\
&  =\exp\,\left(  d_{1}+d_{2}+d_{3}+d_{4}\right)  X.\exp\,\left(  d_{1}%
+d_{2}+d_{3}+d_{4}\right)  Y.\\
&  \exp\,-\left(  d_{1}d_{2}+d_{1}d_{3}+d_{1}d_{4}+d_{2}d_{3}+d_{2}d_{4}%
+d_{3}d_{4}\right)  \left[  X,Y\right]  .\\
&  \exp\,\left(  d_{1}d_{2}d_{3}+d_{1}d_{2}d_{4}+d_{1}d_{3}d_{4}+d_{2}%
d_{3}d_{4}\right)  \left[  X+2Y,\left[  X,Y\right]  \right]  .\\
&  \exp\,d_{1}d_{2}d_{3}d_{4}\left(  -\left[  X,\left[  X,\left[  X,Y\right]
\right]  \right]  -3\left[  X,\left[  Y,\left[  X,Y\right]  \right]  \right]
-3\left[  Y,\left[  Y,\left[  X,Y\right]  \right]  \right]  \right) \\
&  =\exp\,\left(  d_{1}+d_{2}+d_{3}+d_{4}\right)  X.\exp\,\left(  d_{1}%
+d_{2}+d_{3}+d_{4}\right)  Y.\\
&  \exp\,-\frac{\left(  d_{1}+d_{2}+d_{3}+d_{4}\right)  ^{2}}{2}\left[
X,Y\right]  .\\
&  \exp\,\frac{\left(  d_{1}+d_{2}+d_{3}+d_{4}\right)  ^{3}}{12}\left[
X+2Y,\left[  X,Y\right]  \right]  .\\
&  \exp\,\frac{\left(  d_{1}+d_{2}+d_{3}+d_{4}\right)  ^{4}}{24}\left(
-\left[  X,\left[  X,\left[  X,Y\right]  \right]  \right]  -3\left[  X,\left[
Y,\left[  X,Y\right]  \right]  \right]  -3\left[  Y,\left[  Y,\left[
X,Y\right]  \right]  \right]  \right)
\end{align*}

\end{theorem}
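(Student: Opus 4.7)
The plan is to mirror the inductive pattern already set up by the proofs of Theorems~\ref{t6.2} and~\ref{t6.3}. First I would split $(d_1+d_2+d_3+d_4)(X+Y) = (d_1+d_2+d_3)(X+Y) + d_4(X+Y)$ and apply left logarithmic derivation via Theorem~\ref{t5.2}. Since $\mathrm{ad}((d_1+d_2+d_3)(X+Y))$ annihilates $X+Y$, only the $p=0$ term of $\delta^{\mathrm{left}}(\exp)$ contributes, which together with Proposition~\ref{t4.5} yields
\begin{align*}
&\exp(d_1+d_2+d_3+d_4)(X+Y)\\
&= \exp(d_1+d_2+d_3)(X+Y)\cdot \exp d_4(X+Y).
\end{align*}
I would then apply Theorem~\ref{t6.3} to the first factor and Theorem~\ref{t6.1} to the second, reducing the left-hand side to a product of six exponentials: the four emanating from Theorem~\ref{t6.3}, followed by $\exp d_4 X$ and $\exp d_4 Y$.

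The bulk of the work is a commutation dance using Propositions~\ref{t5.3} and~\ref{t5.4}. I would move $\exp d_4 X$ leftward past $\exp d_1 d_2 d_3 [X+2Y,[X,Y]]$, past $\exp -(d_1 d_2+d_1 d_3+d_2 d_3)[X,Y]$, and past $\exp(d_1+d_2+d_3)Y$, so that it joins $\exp(d_1+d_2+d_3)X$; similarly move $\exp d_4 Y$ leftward until it joins $\exp(d_1+d_2+d_3)Y$. Every such commutation invokes Proposition~\ref{t5.4}, with multi-index products such as $d_1 d_2 d_3$ treated as single elements of $D$ (legitimate because they square to zero), and produces a correction exponential whose argument is a nested bracket of $X$ or $Y$ with $[X,Y]$ or $[X+2Y,[X,Y]]$, weighted by a monomial in the $d_i$ of order at most four. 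Freshly produced third-order corrections must themselves be commuted past the pre-existing third-order factors, generating further fourth-order terms. Finally I would use Lemma~\ref{t2.5} together with the Jacobi identity to reduce every fourth-order bracket to a combination of the three building blocks $[X,[X,[X,Y]]]$, $[X,[Y,[X,Y]]]$, $[Y,[Y,[X,Y]]]$, and then invoke Lemma~\ref{t6.0} to rewrite the elementary symmetric polynomials in the $d_i$ as $(d_1+d_2+d_3+d_4)^k/k!$, obtaining the second stated form.

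The main obstacle is the fourth-order bookkeeping: the coefficient of each basis bracket receives contributions from several independent commutation steps, namely the direct crossings of $\exp d_4 X$ through the two bracket exponentials, the direct crossing of $\exp d_4 Y$ through them, and the secondary commutations of the freshly produced third-order $[Y,X]$ corrections past the pre-existing third-order $[X,Y]$ factor. Verifying that, after systematic Jacobi reduction via Lemma~\ref{t2.5}, these disparate contributions collapse precisely to the stated coefficients $-1,-3,-3$ on $[X,[X,[X,Y]]]$, $[X,[Y,[X,Y]]]$, and $[Y,[Y,[X,Y]]]$ respectively is where I expect the calculation to demand the most care.
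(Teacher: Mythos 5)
Your plan is the paper's proof in outline: the same splitting $\left(d_{1}+d_{2}+d_{3}+d_{4}\right)\left(X+Y\right)=\left(d_{1}+d_{2}+d_{3}\right)\left(X+Y\right)+d_{4}\left(X+Y\right)$, the same use of Theorem \ref{t5.2} and Proposition \ref{t4.5} to peel off $\exp\,d_{4}\left(X+Y\right)$, Theorems \ref{t6.1} and \ref{t6.3} for the two resulting factors, then the leftward commutation of $\exp\,d_{4}X$ and $\exp\,d_{4}Y$ via Propositions \ref{t5.3} and \ref{t5.4} (with products of the $d_{i}$ treated as elements of $D$, exactly as the paper does implicitly), and finally Lemma \ref{t6.0} for the symmetric form. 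One correction to your fourth-order bookkeeping, however: the sources you name are partly spurious and miss the one that matters. Commuting a freshly produced third-order correction past the pre-existing third-order factor $\exp\,d_{1}d_{2}d_{3}\left[X+2Y,\left[X,Y\right]\right]$ contributes nothing, because the two coefficient monomials share some $d_{i}$ and their product vanishes; likewise the crossing of the $d_{i}d_{4}\left[Y,X\right]$ corrections with the $\left[X,Y\right]$ factor contributes nothing, since $\left[\left[Y,X\right],\left[X,Y\right]\right]=0$. Besides the two direct crossings $d_{1}d_{2}d_{3}d_{4}\left[\left[X+2Y,\left[X,Y\right]\right],X\right]$ and $d_{1}d_{2}d_{3}d_{4}\left[\left[X+2Y,\left[X,Y\right]\right],Y\right]$, the third and indispensable fourth-order contribution comes from the cascade of the second-order corrections $d_{i}d_{4}\left[Y,X\right]$ through the first-order factors $\exp\,d_{j}Y$: these produce $d_{i}d_{j}d_{4}\left[\left[Y,X\right],Y\right]$, and one further crossing with an $\exp\,d_{k}Y$ yields $d_{1}d_{2}d_{3}d_{4}\left[\left[\left[Y,X\right],Y\right],Y\right]=-d_{1}d_{2}d_{3}d_{4}\left[Y,\left[Y,\left[X,Y\right]\right]\right]$. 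Without that term your tally would give coefficient $-2$ rather than $-3$ on $\left[Y,\left[Y,\left[X,Y\right]\right]\right]$; with it, and with $\left[Y,\left[X,\left[X,Y\right]\right]\right]=\left[X,\left[Y,\left[X,Y\right]\right]\right]$ from Lemma \ref{t2.5}, the stated coefficients $-1,-3,-3$ come out exactly as in the paper. Since you defer to a systematic execution of Propositions \ref{t5.3} and \ref{t5.4}, the computation itself would surface this term, but as written your enumeration of contributions would mislead the bookkeeping you rightly identify as the crux.
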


\begin{proof}
We have
\begin{align*}
&  \exp\,\left(  d_{1}+d_{2}+d_{3}+d_{4}\right)  \left(  X+Y\right) \\
&  =\exp\,\left(  d_{1}+d_{2}+d_{3}\right)  \left(  X+Y\right)  +d_{4}\left(
X+Y\right) \\
&  =\exp\,\left(  d_{1}+d_{2}+d_{3}\right)  \left(  X+Y\right)  .\\
&  \left\{  \delta^{\mathrm{left}}\left(  \exp\right)  \left(  \left(
d_{1}+d_{2}+d_{3}\right)  \left(  X+Y\right)  \right)  \left(  \left(
X+Y\right)  \right)  \right\}  _{d_{4}}\\
&  \left)  \text{left logarithmic derivation}\right( \\
&  =\exp\,\left(  d_{1}+d_{2}+d_{3}\right)  \left(  X+Y\right)  .\left(
X+Y\right)  _{d_{4}}\\
&  \left)  \text{By Theorem \ref{t5.2}}\right( \\
&  =\exp\,\left(  d_{1}+d_{2}+d_{3}\right)  \left(  X+Y\right)  .\exp
\,d_{4}\left(  X+Y\right) \\
&  \left)  \text{By Proposition \ref{t4.5}}\right( \\
&  =\exp\,\left(  d_{1}+d_{2}+d_{3}\right)  X.\exp\,\left(  d_{1}+d_{2}%
+d_{3}\right)  Y.\exp\,-\left(  d_{1}d_{2}+d_{1}d_{3}+d_{2}d_{3}\right)
\left[  X,Y\right]  .\\
&  \exp\,d_{1}d_{2}d_{3}\left[  X+2Y,\left[  X,Y\right]  \right]  .\exp
\,d_{4}X.\exp\,d_{4}Y\\
&  \left)  \text{By Theorems \ref{t6.1} and \ref{t6.3}}\right( \\
&  =\exp\,\left(  d_{1}+d_{2}+d_{3}+d_{4}\right)  X.\exp\,d_{1}Y.\exp
\,d_{1}d_{4}\left[  Y,X\right]  .\exp\,d_{2}Y.\exp\,d_{2}d_{4}\left[
Y,X\right]  .\\
&  \exp\,d_{3}Y.\exp\,d_{3}d_{4}\left[  Y,X\right]  .\exp\,-\left(  d_{1}%
d_{2}+d_{1}d_{3}+d_{2}d_{3}\right)  \left[  X,Y\right]  .\\
&  \exp\,-\left(  d_{1}d_{2}+d_{1}d_{3}+d_{2}d_{3}\right)  d_{4}\left[
\left[  X,Y\right]  ,X\right]  .\exp\,d_{1}d_{2}d_{3}\left[  X+2Y,\left[
X,Y\right]  \right]  .\\
&  \exp\,d_{1}d_{2}d_{3}d_{4}\left[  \left[  X+2Y,\left[  X,Y\right]  \right]
,X\right]  .\exp\,d_{4}Y\\
&  \left)
\begin{array}
[c]{c}%
\text{By moving }\exp\,d_{4}X\text{\ left towards }\exp\,\left(  d_{1}%
+d_{2}+d_{3}\right)  X\text{ }\\
\text{via Propositions \ref{t5.3} and \ref{t5.4}}%
\end{array}
\right( \\
&  =\exp\,\left(  d_{1}+d_{2}+d_{3}+d_{4}\right)  X.\exp\,\left(  d_{1}%
+d_{2}\right)  Y.\exp\,d_{1}d_{4}\left[  Y,X\right]  .\exp\,d_{1}d_{2}%
d_{4}\left[  \left[  Y,X\right]  ,Y\right]  .\\
&  \exp\,d_{2}d_{4}\left[  Y,X\right]  .\exp\,d_{3}Y.\exp\,d_{3}d_{4}\left[
Y,X\right]  .\exp\,-\left(  d_{1}d_{2}+d_{1}d_{3}+d_{2}d_{3}\right)  \left[
X,Y\right]  .\\
&  \exp\,-\left(  d_{1}d_{2}+d_{1}d_{3}+d_{2}d_{3}\right)  d_{4}\left[
\left[  X,Y\right]  ,X\right]  .\exp\,d_{1}d_{2}d_{3}\left[  X+2Y,\left[
X,Y\right]  \right]  .\\
&  \exp\,d_{1}d_{2}d_{3}d_{4}\left[  \left[  X+2Y,\left[  X,Y\right]  \right]
,X\right]  .\exp\,d_{4}Y\\
&  \left)  \text{By interchanging }\exp\,d_{1}d_{4}\left[  Y,X\right]
\text{\ and }\exp\,d_{2}Y\text{ via Proposition \ref{t5.4}}\right( \\
&  =\exp\,\left(  d_{1}+d_{2}+d_{3}+d_{4}\right)  X.\exp\,\left(  d_{1}%
+d_{2}+d_{3}\right)  Y.\exp\,d_{1}d_{4}\left[  Y,X\right]  .\exp\,d_{1}%
d_{3}d_{4}\left[  \left[  Y,X\right]  ,Y\right]  .\\
&  \exp\,d_{1}d_{2}d_{4}\left[  \left[  Y,X\right]  ,Y\right]  .\exp
\,d_{1}d_{2}d_{3}d_{4}\left[  \left[  \left[  Y,X\right]  ,Y\right]
,Y\right]  .\exp\,d_{2}d_{4}\left[  Y,X\right]  .\\
&  \exp\,d_{2}d_{3}d_{4}\left[  \left[  Y,X\right]  ,Y\right]  .\exp
\,d_{3}d_{4}\left[  Y,X\right]  .\exp\,-\left(  d_{1}d_{2}+d_{1}d_{3}%
+d_{2}d_{3}\right)  \left[  X,Y\right]  .\\
&  \exp\,-\left(  d_{1}d_{2}+d_{1}d_{3}+d_{2}d_{3}\right)  d_{4}\left[
\left[  X,Y\right]  ,X\right]  .\exp\,d_{1}d_{2}d_{3}\left[  X+2Y,\left[
X,Y\right]  \right]  .\\
&  \exp\,d_{1}d_{2}d_{3}d_{4}\left[  \left[  X+2Y,\left[  X,Y\right]  \right]
,X\right]  .\exp\,d_{4}Y\\
&  \left)
\begin{array}
[c]{c}%
\text{By moving }\exp\,d_{3}Y\text{\ left towards }\exp\,\left(  d_{1}%
+d_{2}\right)  Y\text{ }\\
\text{via Propositions \ref{t5.3} and \ref{t5.4}}%
\end{array}
\right(
\end{align*}
We keep on:
\begin{align*}
&  =\exp\,\left(  d_{1}+d_{2}+d_{3}+d_{4}\right)  X.\exp\,\left(  d_{1}%
+d_{2}+d_{3}+d_{4}\right)  Y.\exp\,d_{1}d_{4}\left[  Y,X\right]  .\\
&  \exp\,d_{1}d_{3}d_{4}\left[  \left[  Y,X\right]  ,Y\right]  .\exp
\,d_{1}d_{2}d_{4}\left[  \left[  Y,X\right]  ,Y\right]  .\exp\,d_{1}d_{2}%
d_{3}d_{4}\left[  \left[  \left[  Y,X\right]  ,Y\right]  ,Y\right]  .\\
&  \exp\,d_{2}d_{4}\left[  Y,X\right]  .\exp\,d_{2}d_{3}d_{4}\left[  \left[
Y,X\right]  ,Y\right]  .\exp\,d_{3}d_{4}\left[  Y,X\right]  .\\
&  \exp\,-\left(  d_{1}d_{2}+d_{1}d_{3}+d_{2}d_{3}\right)  \left[  X,Y\right]
.\exp\,-\left(  d_{1}d_{2}+d_{1}d_{3}+d_{2}d_{3}\right)  d_{4}\left[  \left[
X,Y\right]  ,Y\right]  .\\
&  \exp\,-\left(  d_{1}d_{2}+d_{1}d_{3}+d_{2}d_{3}\right)  d_{4}\left[
\left[  X,Y\right]  ,X\right]  .\exp\,d_{1}d_{2}d_{3}\left[  X+2Y,\left[
X,Y\right]  \right]  .\\
&  \exp\,d_{1}d_{2}d_{3}d_{4}\left[  \left[  X+2Y,\left[  X,Y\right]  \right]
,Y\right]  .\exp\,d_{1}d_{2}d_{3}d_{4}\left[  \left[  X+2Y,\left[  X,Y\right]
\right]  ,X\right] \\
&  \left)
\begin{array}
[c]{c}%
\text{By moving }\exp\,d_{4}Y\text{\ left towards }\exp\,\left(  d_{1}%
+d_{2}+d_{3}\right)  Y\text{ }\\
\text{via Propositions \ref{t5.3} and \ref{t5.4}}%
\end{array}
\right(  \text{ }\\
&  =\exp\,\left(  d_{1}+d_{2}+d_{3}+d_{4}\right)  X.\exp\,\left(  d_{1}%
+d_{2}+d_{3}+d_{4}\right)  Y.\\
&  \exp\,-\left(  d_{1}d_{2}+d_{1}d_{3}+d_{1}d_{4}+d_{2}d_{3}+d_{2}d_{4}%
+d_{3}d_{4}\right)  \left[  X,Y\right]  .\\
&  \exp\,\left(  d_{1}d_{2}d_{3}+d_{1}d_{2}d_{4}+d_{1}d_{3}d_{4}+d_{2}%
d_{3}d_{4}\right)  \left[  X+2Y,\left[  X,Y\right]  \right]  .\\
&  \exp\,d_{1}d_{2}d_{3}d_{4}\left(  -\left[  X,\left[  X,\left[  X,Y\right]
\right]  \right]  -3\left[  X,\left[  Y,\left[  X,Y\right]  \right]  \right]
-3\left[  Y,\left[  Y,\left[  X,Y\right]  \right]  \right]  \right)
\end{align*}
so that we have got to the desired formula.
\end{proof}

We could keep on, but the complexity of computation increases rapidly.

\section{\label{s7}The First Approach to the Baker-Campbell-Hausdorff Formula}

The following result is no other than Theorem \ref{t6.1} itself.

\begin{theorem}
\label{t7.1}Given $X,Y\in\mathfrak{g}$\ and $d_{1}\in D$, we have
\begin{align*}
&  \exp\,d_{1}X.\exp\,d_{1}Y\\
&  =\exp\,d_{1}\left(  X+Y\right)
\end{align*}

\end{theorem}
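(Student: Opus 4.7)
The plan is essentially to observe that this theorem is a verbatim restatement of Theorem \ref{t6.1}, merely with the two sides of the equation swapped. Hence no new argument is required; one can simply cite Theorem \ref{t6.1} and be done.

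If one preferred to give a direct derivation rather than a pointer, the shortest route would be a three-line chain built from two facts already available in the paper. First I would rewrite each factor on the left using Proposition \ref{t4.5}, which identifies $\exp d_1 X$ with $X_{d_1}$ and $\exp d_1 Y$ with $Y_{d_1}$. Next I would invoke Proposition \ref{t2.2}, whose content is precisely $X_{d_1}.Y_{d_1}=(X+Y)_{d_1}$ for $d_1\in D$. Finally I would apply Proposition \ref{t4.5} once more in the reverse direction to recognize $(X+Y)_{d_1}$ as $\exp d_1(X+Y)$.

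There is no genuine obstacle: the identity holds because for a single nilpotent $d_1\in D$ the Lie-group multiplication of two tangent vectors at $e$ coincides with addition in the Lie algebra, which is exactly the Kock-Lawvere content of Proposition \ref{t2.2}. The subsequent sections presumably build upon this base case by iterating through higher-order infinitesimals (as Theorems \ref{t6.2}, \ref{t6.3}, \ref{t6.4} have already done on the Zassenhaus side), so the role of Theorem \ref{t7.1} here is simply to serve as the order-one BCH identity, whose proof is indeed immediate from Theorem \ref{t6.1}.
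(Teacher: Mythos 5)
Your proposal matches the paper exactly: the text introducing Theorem \ref{t7.1} states that it is ``no other than Theorem \ref{t6.1} itself,'' and your fallback direct derivation (rewrite via Proposition \ref{t4.5}, apply Proposition \ref{t2.2}, rewrite back via Proposition \ref{t4.5}) is precisely the paper's proof of Theorem \ref{t6.1}. Nothing is missing.
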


\begin{corollary}
Given $X_{1},...,X_{n}\in\mathfrak{g}$\ and $d_{1}\in D$, we have
\begin{align*}
&  \exp\,d_{1}X_{1}.\exp\,d_{1}X_{2}....\exp\,d_{1}X_{n}\\
&  =\exp\,d_{1}\left(  X_{1}+X_{2}+...+X_{n}\right)
\end{align*}

\begin{proof}
By simple induction on $n$.
\end{proof}
\end{corollary}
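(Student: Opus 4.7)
The plan is to prove the corollary by straightforward induction on $n$, using Theorem \ref{t7.1} as the essential ingredient at each inductive step. The base case $n=1$ is trivial (both sides literally coincide), and the case $n=2$ is precisely the content of Theorem \ref{t7.1}.

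For the inductive step, I would assume the identity
\[
\exp\,d_{1}X_{1}.\exp\,d_{1}X_{2}.\cdots.\exp\,d_{1}X_{n}=\exp\,d_{1}\left(X_{1}+X_{2}+\cdots+X_{n}\right)
\]
holds for some $n\geq 2$, and then multiply both sides on the right by $\exp\,d_{1}X_{n+1}$. The left-hand side becomes
\[
\exp\,d_{1}X_{1}.\exp\,d_{1}X_{2}.\cdots.\exp\,d_{1}X_{n}.\exp\,d_{1}X_{n+1},
\]
while the right-hand side becomes
\[
\exp\,d_{1}\left(X_{1}+\cdots+X_{n}\right).\exp\,d_{1}X_{n+1},
\]
to which I apply Theorem \ref{t7.1} with $X:=X_{1}+\cdots+X_{n}$ and $Y:=X_{n+1}$, obtaining $\exp\,d_{1}\left(X_{1}+\cdots+X_{n}+X_{n+1}\right)$, as desired.

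There is no real obstacle here; the argument is a textbook induction, and the only subtle point is that Theorem \ref{t7.1} is valid for arbitrary $X,Y\in\mathfrak{g}$, so that at each step we are entitled to feed it the new aggregate $X_{1}+\cdots+X_{n}\in\mathfrak{g}$ together with $X_{n+1}\in\mathfrak{g}$. The crucial feature that makes the induction collapse so cleanly is that only a \emph{single} element $d_{1}\in D$ appears on both factors; the proof of Theorem \ref{t6.1} relies on $d_{1}^{2}=0$ via Proposition \ref{t2.2}, and it is exactly this that prevents any bracket terms from arising, which is why the inductive step does not produce cross-terms that would block telescoping.
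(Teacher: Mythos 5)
Your proof is correct and is exactly the "simple induction on $n$" that the paper invokes: base case from Theorem \ref{t7.1}, inductive step by applying Theorem \ref{t7.1} to the aggregate $X_{1}+\cdots+X_{n}$ together with $X_{n+1}$. Nothing further is needed.
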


\begin{theorem}
\label{t7.2}Given $X,Y\in\mathfrak{g}$\ and $d_{1},d_{2}\in D$, we have
\begin{align*}
&  \exp\,\left(  d_{1}+d_{2}\right)  X.\exp\,\left(  d_{1}+d_{2}\right)  Y\\
&  =\exp\,\left(  d_{1}+d_{2}\right)  \left(  X+Y\right)  +d_{1}d_{2}\left[
X,Y\right] \\
&  =\exp\,\left(  d_{1}+d_{2}\right)  \left(  X+Y\right)  +\frac{\left(
d_{1}+d_{2}\right)  ^{2}}{2}\left[  X,Y\right]
\end{align*}

\end{theorem}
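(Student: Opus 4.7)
The plan is to obtain Theorem \ref{t7.2} as a simple rearrangement of Theorem \ref{t6.2}, after which the two resulting exponentials on the right are merged into a single one via the commuting case, Proposition \ref{t5.3}.

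First I would start from Theorem \ref{t6.2}, which asserts
\[
\exp\,(d_{1}+d_{2})(X+Y)=\exp\,(d_{1}+d_{2})X.\exp\,(d_{1}+d_{2})Y.\exp\,-d_{1}d_{2}[X,Y],
\]
and multiply on the right by $\exp\,d_{1}d_{2}[X,Y]$. By Proposition \ref{t4.5} the factor $\exp\,d_{1}d_{2}[X,Y]$ coincides with $[X,Y]_{d_{1}d_{2}}$, which is inverse to $[X,Y]_{-d_{1}d_{2}}=\exp\,-d_{1}d_{2}[X,Y]$ by the Corollary to Proposition \ref{t2.1}. This gives
\[
\exp\,(d_{1}+d_{2})X.\exp\,(d_{1}+d_{2})Y=\exp\,(d_{1}+d_{2})(X+Y).\exp\,d_{1}d_{2}[X,Y].
\]

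The decisive step is now to collapse these two factors into one exponential. Setting $U=(d_{1}+d_{2})(X+Y)$ and $V=d_{1}d_{2}[X,Y]$ in $\mathfrak{g}$, the bilinearity of the Lie bracket yields
\[
[U,V]=(d_{1}+d_{2})d_{1}d_{2}\,[X+Y,[X,Y]]=(d_{1}^{2}d_{2}+d_{1}d_{2}^{2})\,[X+Y,[X,Y]]=0,
\]
since $d_{1}^{2}=d_{2}^{2}=0$ by $d_{1},d_{2}\in D$. Hence Proposition \ref{t5.3} applies and merges the two factors into $\exp\,(U+V)$, which is the first form of the claim. The second form follows from the elementary identity $(d_{1}+d_{2})^{2}=d_{1}^{2}+2d_{1}d_{2}+d_{2}^{2}=2d_{1}d_{2}$, equivalently $d_{1}d_{2}=(d_{1}+d_{2})^{2}/2$.

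The only delicate point is the commutativity check $[U,V]=0$: although $X+Y$ and $[X,Y]$ need not commute in $\mathfrak{g}$ in general, the extra factor $(d_{1}+d_{2})d_{1}d_{2}$ forces the bracket to vanish purely on account of the nilpotency intrinsic to $D$. Once this is in place, invoking Proposition \ref{t5.3} is formal and the rest is arithmetic, so I do not anticipate any real obstacle beyond this one observation.
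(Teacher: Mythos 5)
Your proposal is correct, and it is in substance the same argument the paper uses, only organized more economically. The paper's own proof of Theorem \ref{t7.2} does not cite Theorem \ref{t6.2}; instead it re-runs the whole chain of manipulations from the proof of that theorem (left logarithmic derivation, Theorem \ref{t5.2}, Proposition \ref{t4.5}, Theorem \ref{t6.1}, Propositions \ref{t5.3} and \ref{t5.4}) to arrive at $\exp\,(d_{1}+d_{2})(X+Y)=\exp\,(d_{1}+d_{2})X.\exp\,(d_{1}+d_{2})Y.\exp\,d_{1}d_{2}\left[Y,X\right]$, and only then multiplies by $\exp\,d_{1}d_{2}\left[X,Y\right]$ on the right and appeals to Proposition \ref{t5.3}, exactly as you do. Since Theorem \ref{t6.2} is proved earlier (\S \ref{s6}) and the paper itself freely reuses \S \ref{s6} results in \S \ref{s7} (Theorem \ref{t7.1} is declared to be Theorem \ref{t6.1} itself), your shortcut of simply quoting Theorem \ref{t6.2} involves no circularity and saves the recomputation. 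A genuine merit of your write-up is that you make explicit the hypothesis needed to invoke Proposition \ref{t5.3} in the merging step, namely $\left[(d_{1}+d_{2})(X+Y),\,d_{1}d_{2}[X,Y]\right]=(d_{1}^{2}d_{2}+d_{1}d_{2}^{2})\left[X+Y,[X,Y]\right]=0$ by $d_{1}^{2}=d_{2}^{2}=0$; the paper uses the same fact but leaves the verification implicit. Your cancellation of $\exp\,-d_{1}d_{2}[X,Y]$ via Proposition \ref{t4.5} and the Corollary to Proposition \ref{t2.1}, and the identity $(d_{1}+d_{2})^{2}=2d_{1}d_{2}$ for the second displayed form, are both fine.
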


\begin{proof}
We have
\begin{align*}
&  \exp\,\left(  d_{1}+d_{2}\right)  \left(  X+Y\right) \\
&  =\exp\,d_{1}\left(  X+Y\right)  +d_{2}\left(  X+Y\right) \\
&  =\exp\,d_{1}\left(  X+Y\right)  .\left\{  \delta^{\mathrm{left}}\left(
\exp\right)  \left(  d_{1}\left(  X+Y\right)  \right)  \left(  X+Y\right)
\right\}  _{d_{2}}\\
&  \left)  \text{left logarithmic derivation}\right( \\
&  =\exp\,d_{1}\left(  X+Y\right)  .\left(  X+Y\right)  _{d_{2}}\\
&  \left)  \text{By Theorem \ref{t5.2}}\right( \\
&  =\exp\,d_{1}\left(  X+Y\right)  .\exp\,d_{2}\left(  X+Y\right) \\
&  \left)  \text{By Proposition \ref{t4.5}}\right( \\
&  =\exp\,d_{1}X.\exp\,d_{1}Y.\exp\,d_{2}X.\exp\,d_{2}Y\\
&  \left)  \text{By Theorem \ref{t6.1}}\right( \\
&  =\exp\,d_{1}X.\exp\,d_{2}X.\exp\,d_{1}Y.\exp\,d_{1}d_{2}\left[  Y,X\right]
.\exp\,d_{2}Y\\
&  \left)  \text{By Proposition \ref{t5.4}}\right( \\
&  =\exp\,d_{1}X.\exp\,d_{2}X.\exp\,d_{1}Y.\exp\,d_{2}Y.\exp\,d_{1}d_{2}
\left[  Y,X\right] \\
&  \left)  \text{By Proposition \ref{t5.3}}\right( \\
&  =\exp\,\left(  d_{1}+d_{2}\right)  X.\exp\,\left(  d_{1}+d_{2}\right)
Y.\exp\,d_{1}d_{2}\left[  Y,X\right] \\
&  \left)  \text{By Proposition \ref{t5.3}}\right(
\end{align*}
so that we get the desired formula by multiplying
\[
\exp\,d_{1}d_{2}\left[  X,Y\right]
\]
from the right and making use of Proposition \ref{t5.3}.
\end{proof}

\begin{corollary}
\label{t7.2.1}(cf. Theorem 2.12.4 of \cite{va}). Given $X_{1},...,X_{n}%
\in\mathfrak{g}$\ and $d_{1},d_{2}\in D$, we have
\begin{align*}
&  \exp\,\left(  d_{1}+d_{2}\right)  X_{1}.\exp\,\left(  d_{1}+d_{2}\right)
X_{2}....\exp\,\left(  d_{1}+d_{2}\right)  X_{n}\\
&  =\exp\,\left(  d_{1}+d_{2}\right)  \left(  X_{1}+...+X_{n}\right)
+d_{1}d_{2}\sum_{1\leq i<j\leq n}\left[  X_{i},X_{j}\right] \\
&  =\exp\,\left(  d_{1}+d_{2}\right)  \left(  X_{1}+...+X_{n}\right)
+\frac{\left(  d_{1}+d_{2}\right)  ^{2}}{2}\sum_{1\leq i<j\leq n}\left[
X_{i},X_{j}\right]
\end{align*}

\end{corollary}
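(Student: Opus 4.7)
The plan is to induct on $n$. The base case $n=2$ is precisely Theorem \ref{t7.2} (and $n=1$ is vacuous). For the inductive step, abbreviate $S_{n-1}=X_1+\cdots+X_{n-1}$ and $T_{n-1}=\sum_{1\le i<j\le n-1}[X_i,X_j]$, denote the left-hand side by $P_n$, and write
\[
P_n=P_{n-1}\cdot\exp\,(d_1+d_2)X_n,
\]
where by induction $P_{n-1}=\exp\bigl((d_1+d_2)S_{n-1}+d_1d_2\,T_{n-1}\bigr)$.

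The pivotal observation is the arithmetic identity $(d_1+d_2)\cdot d_1d_2=d_1^2d_2+d_1d_2^2=0$, together with $(d_1d_2)^2=0$. Consequently, for any $A,B\in\mathfrak{g}$, the brackets $\bigl[(d_1+d_2)A,\,d_1d_2B\bigr]$ and $\bigl[d_1d_2A,\,d_1d_2B\bigr]$ vanish, so by Proposition \ref{t5.3} the corresponding exponentials commute and combine additively. Thus $d_1d_2$-scaled corrections act as central terms and may be freely slid past anything in sight.

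With this in hand, I would proceed in four moves. First, use Proposition \ref{t5.3} to split $P_{n-1}=\exp\bigl((d_1+d_2)S_{n-1}\bigr)\cdot\exp(d_1d_2\,T_{n-1})$. Second, slide $\exp(d_1d_2\,T_{n-1})$ across $\exp\,(d_1+d_2)X_n$, again via Proposition \ref{t5.3}, yielding
\[
P_n=\exp\bigl((d_1+d_2)S_{n-1}\bigr)\cdot\exp\,(d_1+d_2)X_n\cdot\exp(d_1d_2\,T_{n-1}).
\]
Third, apply Theorem \ref{t7.2} to the first two factors to combine them into $\exp\bigl((d_1+d_2)(S_{n-1}+X_n)+d_1d_2\,[S_{n-1},X_n]\bigr)$. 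Fourth, reabsorb $\exp(d_1d_2\,T_{n-1})$ by Proposition \ref{t5.3}, legitimate for the same arithmetic reason. What remains is the identity $[S_{n-1},X_n]+T_{n-1}=\sum_{1\le i<j\le n}[X_i,X_j]$, which is immediate from bilinearity of the bracket.

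I do not anticipate any substantive obstacle; the entire content lies in the remark that $d_1d_2$-scaled corrections are annihilated by any further factor of $d_1$ or $d_2$, which makes them effectively central and reduces the $n$-factor problem to iterated application of the two-factor Theorem \ref{t7.2} coupled with Proposition \ref{t5.3}.
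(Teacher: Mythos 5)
Your argument is correct, and it proves the general case by a clean induction, whereas the paper only writes out $n=3$ in detail (leaving the induction to the reader). The mechanics differ in a genuine way. The paper never invokes Proposition \ref{t5.3} in this proof: after combining the first two factors by Theorem \ref{t7.2}, it rewrites $\exp\,(d_{1}+d_{2})(X_{1}+X_{2})+\frac{(d_{1}+d_{2})^{2}}{2}[X_{1},X_{2}]$ as $\exp\,(d_{1}+d_{2})\bigl\{(X_{1}+X_{2})+\frac{d_{1}+d_{2}}{2}[X_{1},X_{2}]\bigr\}$, i.e.\ it folds the correction into the element being exponentiated, applies Theorem \ref{t7.2} again to this infinitesimal-dependent modified element paired with $X_{3}$, and then kills the higher-order terms $(d_{1}+d_{2})d_{1}d_{2}=0$ by bilinearity. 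You instead keep Theorem \ref{t7.2} applied only to the genuine elements $S_{n-1}$ and $X_{n}$, and handle the correction by observing that any $d_{1}d_{2}$-scaled element has vanishing bracket with everything in sight (since $(d_{1}+d_{2})d_{1}d_{2}=0$ and $(d_{1}d_{2})^{2}=0$), so that Proposition \ref{t5.3} lets you split it off, commute it past $\exp\,(d_{1}+d_{2})X_{n}$, and reabsorb it at the end. Your route buys a tidier inductive step in which the two-factor theorem is never fed modified arguments; the paper's route illustrates the absorption trick that it reuses in the heavier computations of Theorems \ref{t7.3} and \ref{t7.4}. Both rest on the same two pillars (Theorem \ref{t7.2} plus nilpotency of the infinitesimals), and your identity $[S_{n-1},X_{n}]+T_{n-1}=\sum_{1\leq i<j\leq n}[X_{i},X_{j}]$ together with $(d_{1}+d_{2})^{2}=2d_{1}d_{2}$ closes the argument exactly as required.
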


\begin{proof}
Here we deal only with the case of $n=3$, leaving the general treatment by
induction on $n$\ to the reader. We note in passing that the case of $n=2$\ is
no other than Theorem \ref{t7.2} itself. We have
\begin{align*}
&  \exp\,\left(  d_{1}+d_{2}\right)  X_{1}.\exp\,\left(  d_{1}+d_{2}\right)
X_{2}.\exp\,\left(  d_{1}+d_{2}\right)  X_{3}\\
&  =\exp\,\left(  d_{1}+d_{2}\right)  \left(  X_{1}+X_{2}\right)
+\frac{\left(  d_{1}+d_{2}\right)  ^{2}}{2}\left[  X_{1},X_{2}\right]
.\exp\,\left(  d_{1}+d_{2}\right)  X_{3}\\
&  \left)  \text{By Theorem \ref{t7.2}}\right( \\
&  =\exp\,\left(  d_{1}+d_{2}\right)  \left\{  \left(  X_{1}+X_{2}\right)
+\frac{d_{1}+d_{2}}{2}\left[  X_{1},X_{2}\right]  \right\}  .\exp\,\left(
d_{1}+d_{2}\right)  X_{3}\\
&  =\exp\,\left(  d_{1}+d_{2}\right)  \left\{  \left(  X_{1}+X_{2}%
+X_{3}\right)  +\frac{d_{1}+d_{2}}{2}\left[  X_{1},X_{2}\right]  \right\}  +\\
&  d_{1}d_{2}\left[  \left(  X_{1}+X_{2}\right)  +\frac{d_{1}+d_{2}}{2}\left[
X_{1},X_{2}\right]  ,X_{3}\right] \\
&  \left)  \text{By Theorem \ref{t7.2}}\right( \\
&  =\exp\,\left(  d_{1}+d_{2}\right)  \left(  X_{1}+X_{2}+X_{3}\right)
+d_{1}d_{2}\left(  \left[  X_{1},X_{2}\right]  +\left[  X_{1},X_{3}\right]
+\left[  X_{2},X_{3}\right]  \right)
\end{align*}
so that we are done.
\end{proof}

\begin{theorem}
\label{t7.3}Given $X,Y\in\mathfrak{g}$\ and $d_{1},d_{2},d_{3}\in D$, we have
\begin{align*}
&  \exp\,\left(  d_{1}+d_{2}+d_{3}\right)  X.\exp\,\left(  d_{1}+d_{2}%
+d_{3}\right)  Y\\
&  =\exp\,\left(  d_{1}+d_{2}+d_{3}\right)  \left(  X+Y\right)  +\left(
d_{1}d_{2}+d_{1}d_{3}+d_{2}d_{3}\right)  \left[  X,Y\right]  +\\
&  \frac{1}{2}d_{1}d_{2}d_{3}\left[  X-Y,\left[  X,Y\right]  \right] \\
&  =\exp\,\left(  d_{1}+d_{2}+d_{3}\right)  X+\left(  d_{1}+d_{2}%
+d_{3}\right)  Y+\frac{\left(  d_{1}+d_{2}+d_{3}\right)  ^{2}}{2}\left[
X,Y\right]  +\\
&  \frac{\left(  d_{1}+d_{2}+d_{3}\right)  ^{3}}{12}\left[  X-Y,\left[
X,Y\right]  \right]
\end{align*}

\end{theorem}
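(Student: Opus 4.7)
Following the same strategy as the proof of Theorem \ref{t7.2}, carried out one order higher, my plan is to start from the balanced expression $\exp(d_1+d_2+d_3)(X+Y)$, decompose it into six atomic factors $\exp d_iX$ and $\exp d_iY$, sort these into canonical order at the cost of commutator corrections, and finally repackage everything into the single exponential on the right-hand side.

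Writing the exponent as $d_1(X+Y)+d_2(X+Y)+d_3(X+Y)$ and applying Theorem \ref{t5.2} iteratively, every higher-order term of the left logarithmic derivative series vanishes because $[X+Y,X+Y]=0$, leaving
\[
\exp(d_1+d_2+d_3)(X+Y)=\exp d_1(X+Y)\cdot\exp d_2(X+Y)\cdot\exp d_3(X+Y).
\]
Theorem \ref{t7.1} then expands each factor as $\exp d_iX\cdot\exp d_iY$. I next migrate each $\exp d_iY$ rightward of every $\exp d_jX$ with $i>j$ via iterated applications of Proposition \ref{t5.4}; each primary swap contributes a correction $\exp d_id_j[Y,X]$, and when this correction is itself pushed through a later atomic flow, a secondary swap produces a triple-commutator correction of the form $\exp d_1d_2d_3[[Y,X],X]$ or $\exp d_1d_2d_3[[Y,X],Y]$. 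All quartic products of the $d_i$'s vanish, so Proposition \ref{t5.3} handles the remaining rearrangements. After the three $X$-flows and three $Y$-flows coalesce via the one-parameter-subgroup property into $\exp(d_1+d_2+d_3)X$ and $\exp(d_1+d_2+d_3)Y$, the collected corrections reduce (after the Jacobi consequences $[[Y,X],X]=[X,[X,Y]]$ and $[[Y,X],Y]=[Y,[X,Y]]$) to the Zassenhaus-form identity of Theorem \ref{t6.3}.

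To finish, I invert the Zassenhaus identity to put $\exp(d_1+d_2+d_3)X\cdot\exp(d_1+d_2+d_3)Y$ on the left-hand side, producing three correction factors on the right. The triple-commutator correction commutes with both of the others (its brackets against them are of degree $\geq 4$ in the $d_i$'s, hence vanish), so Proposition \ref{t5.3} absorbs it. The two remaining factors $\exp(d_1+d_2+d_3)(X+Y)$ and $\exp((d_1d_2+d_1d_3+d_2d_3)[X,Y])$ do not commute --- their bracket is $3d_1d_2d_3[X+Y,[X,Y]]$ --- but every higher iterated bracket is zero, placing us in a BCH-truncation situation that parallels Theorem \ref{t7.2}. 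The same logarithmic-derivative argument underlying that theorem yields $\exp P_1\cdot\exp P_2=\exp(P_1+P_2+\tfrac{1}{2}[P_1,P_2])$ for this pair, contributing $\tfrac{3}{2}d_1d_2d_3[X+Y,[X,Y]]$ to the combined exponent. Adding this to the $-d_1d_2d_3[X+2Y,[X,Y]]$ coming from the triple-commutator correction produces $\tfrac{1}{2}d_1d_2d_3[X-Y,[X,Y]]$ exactly, which is the third summand of the claimed exponent.

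The hardest part is the bookkeeping in the sorting step: six interleaved atomic factors give rise to three primary $[Y,X]$-corrections and several cascaded secondary triple-commutator corrections, each at a specific position in the ordered product. Verifying that they collect cleanly into the two clean correction factors of Theorem \ref{t6.3}, and that the final BCH-style collapse then delivers the stated $[X-Y,[X,Y]]$ coefficient after Jacobi simplification, is where most of the computational effort lies.
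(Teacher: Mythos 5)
Your route is genuinely different from the paper's: you invert the third-order Zassenhaus identity (Theorem \ref{t6.3}) so as to write $\exp\,\left(  d_{1}+d_{2}+d_{3}\right)  X.\exp\,\left(  d_{1}+d_{2}+d_{3}\right)  Y$ as $\exp\,\left(  d_{1}+d_{2}+d_{3}\right)  \left(  X+Y\right)$ times two correction factors, and then merge the three exponentials into one. Invoking Theorem \ref{t6.3} is legitimate (the paper itself uses \S \ref{s6} results in \S \ref{s7}), and your final bookkeeping is correct: with $P_{1}=\left(  d_{1}+d_{2}+d_{3}\right)  \left(  X+Y\right)$ and $P_{2}=\left(  d_{1}d_{2}+d_{1}d_{3}+d_{2}d_{3}\right)  \left[  X,Y\right]$ one has $\left[  P_{1},P_{2}\right]  =3d_{1}d_{2}d_{3}\left[  X+Y,\left[  X,Y\right]  \right]$, all higher brackets and all brackets against the $d_{1}d_{2}d_{3}$-factor vanish, and $\frac{3}{2}d_{1}d_{2}d_{3}\left[  X+Y,\left[  X,Y\right]  \right]  -d_{1}d_{2}d_{3}\left[  X+2Y,\left[  X,Y\right]  \right]  =\frac{1}{2}d_{1}d_{2}d_{3}\left[  X-Y,\left[  X,Y\right]  \right]$, exactly as needed. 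By contrast, the paper works directly: it starts from $\exp\,\left(  d_{1}+d_{2}+d_{3}\right)  \left(  X+Y\right)  +\left(  d_{1}d_{2}+d_{1}d_{3}+d_{2}d_{3}\right)  \left[  X,Y\right]$, peels off the $d_{3}$-increment with the left logarithmic derivative (Theorem \ref{t5.2}), applies Theorem \ref{t7.2} to the $\left(  d_{1}+d_{2}\right)$-part, and sorts the resulting factors with Propositions \ref{t5.3} and \ref{t5.4}.

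The gap is your pivotal merging step $\exp\,P_{1}.\exp\,P_{2}=\exp\,P_{1}+P_{2}+\frac{1}{2}\left[  P_{1},P_{2}\right]$. This is not one of the paper's established results, and it does not follow from what you cite: Proposition \ref{t5.3} requires the bracket of the two exponents to vanish (here it does not), Proposition \ref{t5.4} applies only to exponents of the form $d_{1}X$, $d_{2}Y$ with $d_{1},d_{2}\in D$, and Theorem \ref{t7.2}, whose proof is tied to the special exponents $\left(  d_{1}+d_{2}\right)  X$ and $\left(  d_{1}+d_{2}\right)  Y$, cannot be quoted for the pair $P_{1},P_{2}$. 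Saying that ``the same logarithmic-derivative argument'' yields it is precisely the point at issue: to prove it in this synthetic setting one must split $\exp\,P_{1}+P_{2}+\frac{1}{2}\left[  P_{1},P_{2}\right]$ one infinitesimal increment at a time via $\delta^{\mathrm{left}}\left(  \exp\right)$ and re-sort with Propositions \ref{t5.3} and \ref{t5.4}, which is in substance the very computation the paper carries out for Theorem \ref{t7.3} itself. As written, your proposal therefore reduces the theorem to an unproven lemma of essentially the same difficulty; to complete it you must either prove this central-bracket special case of BCH for such nilpotent exponents explicitly, or perform the splitting-and-sorting computation as the paper does.
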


\begin{proof}
We have
\begin{align*}
&  \exp\,\left(  d_{1}+d_{2}+d_{3}\right)  \left(  X+Y\right)  +\left(
d_{1}d_{2}+d_{1}d_{3}+d_{2}d_{3}\right)  \left[  X,Y\right] \\
&  =\exp\,\left\{  \left(  d_{1}+d_{2}\right)  \left(  X+Y\right)  +d_{1}d_{2}
\left[  X,Y\right]  \right\}  +d_{3}\left\{  \left(  X+Y\right)  +\left(
d_{1}+d_{2}\right)  \left[  X,Y\right]  \right\} \\
&  =\exp\,\left(  \left(  d_{1}+d_{2}\right)  \left(  X+Y\right)  +d_{1}d_{2}
\left[  X,Y\right]  \right)  .\\
&  \left\{  \delta^{\mathrm{left}}\left(  \exp\right)  \left(  \left(
d_{1}+d_{2}\right)  \left(  X+Y\right)  +d_{1}d_{2}\left[  X,Y\right]
\right)  \left(  \left(  X+Y\right)  +\left(  d_{1}+d_{2}\right)  \left[
X,Y\right]  \right)  \right\}  _{d_{3}}\\
&  \left)  \text{left logarithmic derivation}\right( \\
&  =\exp\,\left(  \left(  d_{1}+d_{2}\right)  \left(  X+Y\right)  +d_{1}d_{2}
\left[  X,Y\right]  \right)  .\\
&  \left\{  \left(  X+Y\right)  +\left(  d_{1}+d_{2}\right)  \left[
X,Y\right]  -\frac{1}{2}d_{1}d_{2}\left[  X+Y,\left[  X,Y\right]  \right]
\right\}  _{d_{3}}\\
&  \left)  \text{By Theorem \ref{t5.2}}\right( \\
&  =\exp\,\left(  d_{1}+d_{2}\right)  X.\exp\,\left(  d_{1}+d_{2}\right)  Y.\\
&  \left\{  \left(  X+Y\right)  +\left(  d_{1}+d_{2}\right)  \left[
X,Y\right]  -\frac{1}{2}d_{1}d_{2}\left[  X+Y,\left[  X,Y\right]  \right]
\right\}  _{d_{3}}\\
&  \text{[By Theorem \ref{t7.2}]}\\
&  =\exp\,\left(  d_{1}+d_{2}\right)  X.\exp\,\left(  d_{1}+d_{2}\right)
Y.\left(  X+Y\right)  _{d_{3}}.\left(  \left(  d_{1}+d_{2}\right)  \left[
X,Y\right]  \right)  _{d_{3}}.\\
&  \left(  -\frac{1}{2}d_{1}d_{2}\left[  X+Y,\left[  X,Y\right]  \right]
\right)  _{d_{3}}\\
&  \text{[By Proposition \ref{t2.2}]}\\
&  =\exp\,\left(  d_{1}+d_{2}\right)  X.\exp\,\left(  d_{1}+d_{2}\right)
Y.\exp\,d_{3}\left(  X+Y\right)  .\exp\,\left(  d_{1}+d_{2}\right)
d_{3}\left[  X,Y\right]  .\\
&  \exp\,-\frac{1}{2}d_{1}d_{2}d_{3}\left[  X+Y,\left[  X,Y\right]  \right] \\
&  \text{[By Proposition \ref{t4.5}]}\\
&  =\exp\,\left(  d_{1}+d_{2}\right)  X.\exp\,d_{1}Y.\exp\,d_{2}Y.\exp
\,d_{3}X.\exp\,d_{3}Y.\exp\,\left(  d_{1}+d_{2}\right)  d_{3}\left[
X,Y\right]  .\\
&  \exp\,-\frac{1}{2}d_{1}d_{2}d_{3}\left[  X+Y,\left[  X,Y\right]  \right] \\
&  \left)  \text{By Proposition \ref{t5.3}}\right( \\
&  =\exp\,\left(  d_{1}+d_{2}\right)  X.\exp\,d_{1}Y.\exp\,d_{3}X.\exp
\,d_{2}Y.\exp\,d_{2}d_{3}\left[  Y,X\right]  .\exp\,d_{3}Y.\\
&  \exp\,\left(  d_{1}+d_{2}\right)  d_{3}\left[  X,Y\right]  .\exp\,-\frac
{1}{2}d_{1}d_{2}d_{3}\left[  X+Y,\left[  X,Y\right]  \right] \\
&  \left)  \text{By Proposition \ref{t5.4}}\right( \\
&  =\exp\,\left(  d_{1}+d_{2}\right)  X.\exp\,d_{3}X.\exp\,d_{1}Y.\exp
\,d_{1}d_{3}\left[  Y,X\right]  .\exp\,d_{2}Y.\exp\,d_{2}d_{3}\left[
Y,X\right]  .\exp\,d_{3}Y.\\
&  \exp\,\left(  d_{1}+d_{2}\right)  d_{3}\left[  X,Y\right]  .\exp\,-\frac
{1}{2}d_{1}d_{2}d_{3}\left[  X+Y,\left[  X,Y\right]  \right] \\
&  \left)  \text{By Proposition \ref{t5.4}}\right(
\end{align*}
We keep on.
\begin{align*}
&  =\exp\,\left(  d_{1}+d_{2}\right)  X.\exp\,d_{3}X.\exp\,d_{1}Y.\exp
\,d_{2}Y.\exp\,d_{1}d_{3}\left[  Y,X\right]  .\exp\,d_{1}d_{2}d_{3}\left[
\left[  Y,X\right]  ,Y\right]  .\\
&  \exp\,d_{2}d_{3}\left[  Y,X\right]  .\exp\,d_{3}Y.\exp\,\left(  d_{1}%
+d_{2}\right)  d_{3}\left[  X,Y\right]  .\exp\,-\frac{1}{2}d_{1}d_{2}%
d_{3}\left[  X+Y,\left[  X,Y\right]  \right] \\
&  \left)  \text{By Proposition \ref{t5.4}}\right( \\
&  =\exp\,\left(  d_{1}+d_{2}+d_{3}\right)  X.\exp\,\left(  d_{1}+d_{2}%
+d_{3}\right)  Y.\exp\,\frac{1}{2}d_{1}d_{2}d_{3}\left[  Y-X,\left[
X,Y\right]  \right] \\
&  \left)  \text{By repeated use of Proposition \ref{t5.3}}\right(
\end{align*}

so that we get the desired formula by multiplying
\[
\exp\,\frac{1}{2}d_{1}d_{2}d_{3}\left[  X-Y,\left[  X,Y\right]  \right]
\]
from the right and making use of Proposition \ref{t5.3}.
\end{proof}

\begin{theorem}
\label{t7.4}Given $X,Y\in\mathfrak{g}$\ and $d_{1},d_{2},d_{3},d_{4}\in D$, we
have
\begin{align*}
&  \exp\,\left(  d_{1}+d_{2}+d_{3}+d_{4}\right)  X.\exp\,\left(  d_{1}%
+d_{2}+d_{3}+d_{4}\right)  Y\\
&  =\exp\,\left(  d_{1}+d_{2}+d_{3}+d_{4}\right)  X+\left(  d_{1}+d_{2}%
+d_{3}+d_{4}\right)  Y+\\
&  \left(  d_{1}d_{2}+d_{1}d_{3}+d_{1}d_{4}+d_{2}d_{3}+d_{2}d_{4}+d_{3}%
d_{4}\right)  \left[  X,Y\right]  +\\
&  \frac{1}{2}\left(  d_{1}d_{2}d_{3}+d_{1}d_{2}d_{4}+d_{1}d_{3}d_{4}%
+d_{2}d_{3}d_{4}\right)  \left[  X-Y,\left[  X,Y\right]  \right]  -\\
&  d_{1}d_{2}d_{3}d_{4}\left(  \frac{1}{2}\left[  X,\left[  X,\left[
X,Y\right]  \right]  \right]  +\frac{1}{2}\left[  Y,\left[  Y,\left[
X,Y\right]  \right]  \right]  +2\left[  X,\left[  Y,\left[  X,Y\right]
\right]  \right]  \right) \\
&  =\exp\,\left(  d_{1}+d_{2}+d_{3}+d_{4}\right)  X+\left(  d_{1}+d_{2}%
+d_{3}+d_{4}\right)  Y+\\
&  \frac{\left(  d_{1}+d_{2}+d_{3}+d_{4}\right)  ^{2}}{2}\left[  X,Y\right]
+\\
&  \frac{\left(  d_{1}+d_{2}+d_{3}+d_{4}\right)  ^{3}}{12}\left[  X-Y,\left[
X,Y\right]  \right]  -\\
&  \frac{\left(  d_{1}+d_{2}+d_{3}+d_{4}\right)  ^{4}}{24}\left(  \frac{1}%
{2}\left[  X,\left[  X,\left[  X,Y\right]  \right]  \right]  +\frac{1}%
{2}\left[  Y,\left[  Y,\left[  X,Y\right]  \right]  \right]  +2\left[
X,\left[  Y,\left[  X,Y\right]  \right]  \right]  \right)
\end{align*}

\end{theorem}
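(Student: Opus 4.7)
The plan is to imitate the inductive structure already used to pass from Theorem \ref{t7.2} to Theorem \ref{t7.3}, only one step higher. Write the complicated argument on the right-hand side as $\Phi=A+d_{4}B$, where
\[
A=(d_{1}+d_{2}+d_{3})(X+Y)+(d_{1}d_{2}+d_{1}d_{3}+d_{2}d_{3})[X,Y]+\tfrac{1}{2}d_{1}d_{2}d_{3}[X-Y,[X,Y]]
\]
is precisely the argument that appears in Theorem \ref{t7.3}, and
\[
B=(X+Y)+(d_{1}+d_{2}+d_{3})[X,Y]+\tfrac{1}{2}(d_{1}d_{2}+d_{1}d_{3}+d_{2}d_{3})[X-Y,[X,Y]]-d_{1}d_{2}d_{3}Q,
\]
with $Q=\tfrac{1}{2}[X,[X,[X,Y]]]+\tfrac{1}{2}[Y,[Y,[X,Y]]]+2[X,[Y,[X,Y]]]$, collects the remainder. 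The payoff of this decomposition is that $\exp\Phi$ can be broken by the left logarithmic derivative exactly as in the previous proofs.

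First I would invoke left logarithmic derivation to write $\exp\Phi=\exp A.\{\delta^{\mathrm{left}}(\exp)(A)(B)\}_{d_{4}}$. Theorem \ref{t5.2} then reduces $\delta^{\mathrm{left}}(\exp)(A)(B)$ to a truncated series $\sum_{p}\tfrac{(-1)^{p}}{(p+1)!}(\mathrm{ad}\,A)^{p}(B)$, and because every term of $A$ carries at least one factor of some $d_{i}$ with $d_{i}^{2}=0$, the $p\geq 4$ terms and all ``diagonal'' contributions collapse, leaving a concrete expression in $X,Y,[X,Y]$ and their iterated brackets. Applying Proposition \ref{t2.2} to split the resulting $\{\cdot\}_{d_{4}}$ into a product and Proposition \ref{t4.5} to re-express each piece as $\exp\,d_{4}\cdot(\cdots)$, I would then invoke Theorem \ref{t7.3} on $\exp A$ to arrive at
\[
\exp\Phi=\exp(d_{1}+d_{2}+d_{3})X.\exp(d_{1}+d_{2}+d_{3})Y.\exp\,d_{4}X.\exp\,d_{4}Y.(\text{bracket corrections}).
\]

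The final stage is the rearrangement: I would move $\exp\,d_{4}X$ leftward past the $Y$-factor, and then $\exp\,d_{4}Y$ leftward past the resulting factors, exactly as in the proofs of Theorems \ref{t6.4} and \ref{t7.3}, each swap being paid for by the commutator corrections dictated by Proposition \ref{t5.4}, while Proposition \ref{t5.3} is used freely to absorb mutually commuting $\exp$-factors living in the third and fourth layers of the lower central filtration. The Jacobi identity in the form of Lemma \ref{t2.5}, together with $[X+2Y,[X,Y]]$-style regroupings already seen in Theorem \ref{t6.3}, is what ultimately telescopes all the commutators of depth three and four into the symmetric form displayed in the theorem.

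The main obstacle is purely combinatorial bookkeeping: every time Proposition \ref{t5.4} swaps two exponentials it creates a new commutator factor, which must itself be swapped with the remaining factors, spawning double and triple brackets; keeping these straight, and ensuring that the order-four terms $[X,[X,[X,Y]]]$, $[X,[Y,[X,Y]]]$, $[Y,[Y,[X,Y]]]$ combine with the indicated coefficients $-\tfrac{1}{2},-2,-\tfrac{1}{2}$ after invoking Lemma \ref{t2.5}, is where the delicacy lies. The remaining identity expressing the stated expression in terms of $(d_{1}+d_{2}+d_{3}+d_{4})^{k}/k!$ is a direct application of Lemma \ref{t6.0}.
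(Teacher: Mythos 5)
Your proposal follows essentially the same route as the paper's own proof: the same decomposition of the exponent into the Theorem \ref{t7.3} argument $A$ plus a $d_{4}$-part, the same use of left logarithmic derivation with Theorem \ref{t5.2}, then Propositions \ref{t2.2} and \ref{t4.5}, Theorem \ref{t7.3}, the rearrangement via Propositions \ref{t5.3} and \ref{t5.4}, Lemma \ref{t2.5} to merge the depth-four brackets, and Lemma \ref{t6.0} for the symmetric-power form. The only immaterial difference is that you fold the fourth-order correction $-d_{1}d_{2}d_{3}Q$ into the exponent from the outset, whereas the paper leaves it out, arrives at the product with a residual factor $\exp\,d_{1}d_{2}d_{3}d_{4}Q$, and cancels it by right-multiplication using Proposition \ref{t5.3}.
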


\begin{proof}
We have
\begin{align*}
&  \exp\,\left(  d_{1}+d_{2}+d_{3}+d_{4}\right)  X+\left(  d_{1}+d_{2}%
+d_{3}+d_{4}\right)  Y+\\
&  \left(  d_{1}d_{2}+d_{1}d_{3}+d_{1}d_{4}+d_{2}d_{3}+d_{2}d_{4}+d_{3}%
d_{4}\right)  \left[  X,Y\right]  +\\
&  \frac{1}{2}\left(  d_{1}d_{2}d_{3}+d_{1}d_{2}d_{4}+d_{1}d_{3}d_{4}%
+d_{2}d_{3}d_{4}\right)  \left[  X-Y,\left[  X,Y\right]  \right] \\
&  =\exp\,\left\{
\begin{array}
[c]{c}%
\left(  d_{1}+d_{2}+d_{3}\right)  \left(  X+Y\right)  +\left(  d_{1}%
d_{2}+d_{1}d_{3}+d_{2}d_{3}\right)  \left[  X,Y\right]  +\\
\frac{1}{2}d_{1}d_{2}d_{3}\left[  X-Y,\left[  X,Y\right]  \right]
\end{array}
\right\}  +\\
&  \left\{  d_{4}\left(  X+Y\right)  +d_{4}\left(  d_{1}+d_{2}+d_{3}\right)
\left[  X,Y\right]  +\frac{1}{2}d_{4}\left(  d_{1}d_{2}+d_{1}d_{3}+d_{2}%
d_{3}\right)  \left[  X-Y,\left[  X,Y\right]  \right]  \right\} \\
&  =\exp\,\left\{
\begin{array}
[c]{c}%
\left(  d_{1}+d_{2}+d_{3}\right)  X+\left(  d_{1}+d_{2}+d_{3}\right)
Y+\left(  d_{1}d_{2}+d_{1}d_{3}+d_{2}d_{3}\right)  \left[  X,Y\right]  +\\
\frac{1}{2}d_{1}d_{2}d_{3}\left[  X-Y,\left[  X,Y\right]  \right]
\end{array}
\right\}  .\\
&  \left\{
\begin{array}
[c]{c}%
\delta^{\mathrm{left}}\left(  \exp\right)  \left(
\begin{array}
[c]{c}%
\left(  d_{1}+d_{2}+d_{3}\right)  \left(  X+Y\right)  +\left(  d_{1}%
d_{2}+d_{1}d_{3}+d_{2}d_{3}\right)  \left[  X,Y\right]  +\\
\frac{1}{2}d_{1}d_{2}d_{3}\left[  X-Y,\left[  X,Y\right]  \right]
\end{array}
\right) \\
\left(
\begin{array}
[c]{c}%
\left(  X+Y\right)  +\left(  d_{1}+d_{2}+d_{3}\right)  \left[  X,Y\right]  +\\
\frac{1}{2}\left(  d_{1}d_{2}+d_{1}d_{3}+d_{2}d_{3}\right)  \left[
X-Y,\left[  X,Y\right]  \right]
\end{array}
\right)
\end{array}
\right\}  _{d_{4}}\\
&  \left)  \text{left logarithmic derivation}\right( \\
&  =\exp\,\left\{
\begin{array}
[c]{c}%
\left(  d_{1}+d_{2}+d_{3}\right)  \left(  X+Y\right)  +\left(  d_{1}%
d_{2}+d_{1}d_{3}+d_{2}d_{3}\right)  \left[  X,Y\right]  +\\
\frac{1}{2}d_{1}d_{2}d_{3}\left[  X-Y,\left[  X,Y\right]  \right]
\end{array}
\right\}  .\\
&  \left\{
\begin{array}
[c]{c}%
\left(  X+Y\right)  +\left(  d_{1}+d_{2}+d_{3}\right)  \left[  X,Y\right]
+\frac{1}{2}\left(  d_{1}d_{2}+d_{1}d_{3}+d_{2}d_{3}\right)  \left[
X-Y,\left[  X,Y\right]  \right]  -\\
\frac{1}{2}\left(  d_{1}+d_{2}+d_{3}\right)  ^{2}\left[  X+Y,\left[
X,Y\right]  \right]  -\frac{1}{8}\left(  d_{1}+d_{2}+d_{3}\right)  ^{3}\left[
X+Y,\left[  X-Y,\left[  X,Y\right]  \right]  \right]  +\\
\frac{1}{2}\left(  d_{1}d_{2}+d_{1}d_{3}+d_{2}d_{3}\right)  \left[
X+Y,\left[  X,Y\right]  \right]  +\frac{1}{4}d_{1}d_{2}d_{3}\left[
X+Y,\left[  X-Y,\left[  X,Y\right]  \right]  \right]  +\\
\frac{1}{6}\left(  d_{1}+d_{2}+d_{3}\right)  ^{3}\left[  X+Y,\left[
X+Y,\left[  X,Y\right]  \right]  \right]
\end{array}
\right\}  _{d_{4}}\\
&  \left)  \text{By Theorem \ref{t5.2}}\right( \\
&  =\exp\,\left\{
\begin{array}
[c]{c}%
\left(  d_{1}+d_{2}+d_{3}\right)  \left(  X+Y\right)  +\left(  d_{1}%
d_{2}+d_{1}d_{3}+d_{2}d_{3}\right)  \left[  X,Y\right]  +\\
\frac{1}{2}d_{1}d_{2}d_{3}\left[  X-Y,\left[  X,Y\right]  \right]
\end{array}
\right\}  .\\
&  \left\{
\begin{array}
[c]{c}%
\left(  X+Y\right)  +\left(  d_{1}+d_{2}+d_{3}\right)  \left[  X,Y\right]
-\left(  d_{1}d_{2}+d_{1}d_{3}+d_{2}d_{3}\right)  \left[  Y,\left[
X,Y\right]  \right]  +\\
\frac{1}{2}d_{1}d_{2}d_{3}\left[  X+Y,\left[  X,\left[  X,Y\right]  \right]
\right]  +\frac{3}{2}d_{1}d_{2}d_{3}\left[  X+Y,\left[  Y,\left[  X,Y\right]
\right]  \right]
\end{array}
\right\}  _{d_{4}}\\
&  =\exp\,\left(  d_{1}+d_{2}+d_{3}\right)  X.\exp\,\left(  d_{1}+d_{2}%
+d_{3}\right)  Y.\\
&  \left\{
\begin{array}
[c]{c}%
\left(  X+Y\right)  +\left(  d_{1}+d_{2}+d_{3}\right)  \left[  X,Y\right]
-\left(  d_{1}d_{2}+d_{1}d_{3}+d_{2}d_{3}\right)  \left[  Y,\left[
X,Y\right]  \right]  +\\
\frac{1}{2}d_{1}d_{2}d_{3}\left[  X+Y,\left[  X,\left[  X,Y\right]  \right]
\right]  +\frac{3}{2}d_{1}d_{2}d_{3}\left[  X+Y,\left[  Y,\left[  X,Y\right]
\right]  \right]
\end{array}
\right\}  _{d_{4}}\\
&  \left)  \text{By Theorem \ref{t7.3}}\right( \\
&  =\exp\,\left(  d_{1}+d_{2}+d_{3}\right)  X.\exp\,\left(  d_{1}+d_{2}%
+d_{3}\right)  Y.\left(  X+Y\right)  _{d_{4}}.\left(  \left(  d_{1}%
+d_{2}+d_{3}\right)  \left[  X,Y\right]  \right)  _{d_{4}}.\\
&  \left(  -\left(  d_{1}d_{2}+d_{1}d_{3}+d_{2}d_{3}\right)  \left[  Y,\left[
X,Y\right]  \right]  \right)  _{d_{4}}.\\
&  \left(  \frac{1}{2}d_{1}d_{2}d_{3}\left[  X+Y,\left[  X,\left[  X,Y\right]
\right]  \right]  +\frac{3}{2}d_{1}d_{2}d_{3}\left[  X+Y,\left[  Y,\left[
X,Y\right]  \right]  \right]  \right)  _{d_{4}}\\
&  \left)  \text{By Proposition \ref{t2.2}}\right( \\
&  =\exp\,\left(  d_{1}+d_{2}+d_{3}\right)  X.\exp\,\left(  d_{1}+d_{2}%
+d_{3}\right)  Y.\exp\,d_{4}\left(  X+Y\right)  .\exp\,d_{4}\left(
d_{1}+d_{2}+d_{3}\right)  \left[  X,Y\right]  .\\
&  \exp\,-d_{4}\left(  d_{1}d_{2}+d_{1}d_{3}+d_{2}d_{3}\right)  \left[
Y,\left[  X,Y\right]  \right]  .\\
&  \exp\,d_{4}\left(  \frac{1}{2}d_{1}d_{2}d_{3}\left[  X+Y,\left[  X,\left[
X,Y\right]  \right]  \right]  +\frac{3}{2}d_{1}d_{2}d_{3}\left[  X+Y,\left[
Y,\left[  X,Y\right]  \right]  \right]  \right) \\
&  \left)  \text{By Proposition \ref{t4.5}}\right( \\
&  =\exp\,\left(  d_{1}+d_{2}+d_{3}\right)  X.\exp\,d_{1}Y.\exp\,d_{2}%
Y.\exp\,d_{3}Y.\exp\,d_{4}X.\exp\,d_{4}Y.\\
&  \exp\,d_{4}\left(  d_{1}+d_{2}+d_{3}\right)  \left[  X,Y\right]
.\exp\,-d_{4}\left(  d_{1}d_{2}+d_{1}d_{3}+d_{2}d_{3}\right)  \left[
Y,\left[  X,Y\right]  \right]  .\\
&  \exp\,d_{4}\left(  \frac{1}{2}d_{1}d_{2}d_{3}\left[  X+Y,\left[  X,\left[
X,Y\right]  \right]  \right]  +\frac{3}{2}d_{1}d_{2}d_{3}\left[  X+Y,\left[
Y,\left[  X,Y\right]  \right]  \right]  \right) \\
&  \left)  \text{By Proposition \ref{t5.3}}\right(
\end{align*}
We keep on
\begin{align*}
&  =\exp\,\left(  d_{1}+d_{2}+d_{3}+d_{4}\right)  X.\exp\,d_{1}Y.\exp
\,d_{1}d_{4}\left[  Y,X\right]  .\exp\,d_{2}Y.\exp\,d_{2}d_{4}\left[
Y,X\right]  .\exp\,d_{3}Y.\\
&  \exp\,d_{3}d_{4}\left[  Y,X\right]  .\exp\,d_{4}Y.\exp\,d_{4}\left(
d_{1}+d_{2}+d_{3}\right)  \left[  X,Y\right]  .\\
&  \exp\,-d_{4}\left(  d_{1}d_{2}+d_{1}d_{3}+d_{2}d_{3}\right)  \left[
Y,\left[  X,Y\right]  \right]  .\\
&  \exp\,d_{4}\left(  \frac{1}{2}d_{1}d_{2}d_{3}\left[  X+Y,\left[  X,\left[
X,Y\right]  \right]  \right]  +\frac{3}{2}d_{1}d_{2}d_{3}\left[  X+Y,\left[
Y,\left[  X,Y\right]  \right]  \right]  \right) \\
&  \left)
\begin{array}
[c]{c}%
\text{By moving }\exp\,d_{4}X\text{\ left towards }\exp\,\left(  d_{1}%
+d_{2}+d_{3}\right)  X\text{\ }\\
\text{via Propositions \ref{t5.3} and \ref{t5.4}}%
\end{array}
\right( \\
&  =\exp\,\left(  d_{1}+d_{2}+d_{3}+d_{4}\right)  X.\exp\,\left(  d_{1}%
+d_{2}\right)  Y.\exp\,d_{1}d_{4}\left[  Y,X\right]  .\exp\,d_{1}d_{2}%
d_{4}\left[  \left[  Y,X\right]  ,Y\right]  .\\
&  \exp\,d_{2}d_{4}\left[  Y,X\right]  .\exp\,d_{3}Y.\exp\,d_{3}d_{4}\left[
Y,X\right]  .\exp\,d_{4}Y.\exp\,d_{4}\left(  d_{1}+d_{2}+d_{3}\right)  \left[
X,Y\right]  .\\
&  \exp\,-d_{4}\left(  d_{1}d_{2}+d_{1}d_{3}+d_{2}d_{3}\right)  \left[
Y,\left[  X,Y\right]  \right]  .\\
&  \exp\,d_{4}\left(  \frac{1}{2}d_{1}d_{2}d_{3}\left[  X+Y,\left[  X,\left[
X,Y\right]  \right]  \right]  +\frac{3}{2}d_{1}d_{2}d_{3}\left[  X+Y,\left[
Y,\left[  X,Y\right]  \right]  \right]  \right) \\
&  \left)
\begin{array}
[c]{c}%
\text{By exchanging }\exp\,d_{1}d_{4}\left[  Y,X\right]  \text{ and }%
\exp\,d_{2}Y\text{\ via Proposition \ref{t5.4} }\\
\text{and using Proposition \ref{t5.3}}%
\end{array}
\right( \\
&  =\exp\,\left(  d_{1}+d_{2}+d_{3}+d_{4}\right)  X.\exp\,\left(  d_{1}%
+d_{2}+d_{3}\right)  Y.\exp\,d_{1}d_{4}\left[  Y,X\right]  .\exp\,d_{1}%
d_{3}d_{4}\left[  \left[  Y,X\right]  ,Y\right]  .\\
&  \exp\,d_{1}d_{2}d_{4}\left[  \left[  Y,X\right]  ,Y\right]  .\exp
\,d_{1}d_{2}d_{3}d_{4}\left[  \left[  \left[  Y,X\right]  ,Y\right]
,Y\right]  .\exp\,d_{2}d_{4}\left[  Y,X\right]  .\\
&  \exp\,d_{2}d_{3}d_{4}\left[  \left[  Y,X\right]  ,Y\right]  .\exp
\,d_{3}d_{4}\left[  Y,X\right]  .\exp\,d_{4}Y.\exp\,d_{4}\left(  d_{1}%
+d_{2}+d_{3}\right)  \left[  X,Y\right]  .\\
&  \exp\,-d_{4}\left(  d_{1}d_{2}+d_{1}d_{3}+d_{2}d_{3}\right)  \left[
Y,\left[  X,Y\right]  \right]  .\\
&  \exp\,d_{4}\left(  \frac{1}{2}d_{1}d_{2}d_{3}\left[  X+Y,\left[  X,\left[
X,Y\right]  \right]  \right]  +\frac{3}{2}d_{1}d_{2}d_{3}\left[  X+Y,\left[
Y,\left[  X,Y\right]  \right]  \right]  \right) \\
&  \left)
\begin{array}
[c]{c}%
\text{By moving }\exp\,d_{3}Y\text{\ left towards }\exp\,\left(  d_{1}%
+d_{2}\right)  Y\text{\ }\\
\text{via Propositions \ref{t5.3} and \ref{t5.4}}%
\end{array}
\right( \\
&  =\exp\,\left(  d_{1}+d_{2}+d_{3}+d_{4}\right)  X.\exp\,\left(  d_{1}%
+d_{2}+d_{3}+d_{4}\right)  Y.\\
&  \exp\,d_{1}d_{2}d_{3}d_{4}\left(  \left[  \left[  \left[  Y,X\right]
,Y\right]  ,Y\right]  +\frac{1}{2}\left[  X+Y,\left[  X,\left[  X,Y\right]
\right]  \right]  +\frac{3}{2}\left[  X+Y,\left[  Y,\left[  X,Y\right]
\right]  \right]  \right) \\
&  \left)  \text{By moving }\exp\,d_{4}Y\text{\ left towards }\exp\,\left(
d_{1}+d_{2}+d_{3}\right)  Y\text{\ via Proposition \ref{t5.3}}\right( \\
&  =\exp\,\left(  d_{1}+d_{2}+d_{3}+d_{4}\right)  X.\exp\,\left(  d_{1}%
+d_{2}+d_{3}+d_{4}\right)  Y.\\
&  \exp\,d_{1}d_{2}d_{3}d_{4}\left(
\begin{array}
[c]{c}%
\frac{1}{2}\left[  X,\left[  X,\left[  X,Y\right]  \right]  \right]  +\frac
{1}{2}\left[  Y,\left[  X,\left[  X,Y\right]  \right]  \right]  +\frac{3}%
{2}\left[  X,\left[  Y,\left[  X,Y\right]  \right]  \right]  +\\
\frac{1}{2}\left[  Y,\left[  Y,\left[  X,Y\right]  \right]  \right]
\end{array}
\right) \\
&  =\exp\,\left(  d_{1}+d_{2}+d_{3}+d_{4}\right)  X.\exp\,\left(  d_{1}%
+d_{2}+d_{3}+d_{4}\right)  Y.\\
&  \exp\,d_{1}d_{2}d_{3}d_{4}\left(  \frac{1}{2}\left[  X,\left[  X,\left[
X,Y\right]  \right]  \right]  +\frac{1}{2}\left[  Y,\left[  Y,\left[
X,Y\right]  \right]  \right]  +2\left[  X,\left[  Y,\left[  X,Y\right]
\right]  \right]  \right) \\
&  \left)  \text{By Lemma \ref{t2.5}}\right(
\end{align*}
so that we get the desired formula by multiplying
\[
\exp\,-d_{1}d_{2}d_{3}d_{4}\left(  \frac{1}{2}\left[  X,\left[  X,\left[
X,Y\right]  \right]  \right]  +\frac{1}{2}\left[  Y,\left[  Y,\left[
X,Y\right]  \right]  \right]  +2\left[  X,\left[  Y,\left[  X,Y\right]
\right]  \right]  \right)
\]
from the right and making use of Proposition \ref{t5.3}.
\end{proof}

We could keep on, but the complexity of computation increases rapidly.

\section{\label{s8}The Second Approach to the Baker-Campbell-Hausdorff
Formula}

\begin{theorem}
\label{t8.1}Given $X,Y\in\mathfrak{g}$\ and $d_{1}\in D$, we have
\begin{align*}
&  \exp\,d_{1}X.\exp\,\,d_{1}Y\\
&  =\exp\,d_{1}\left(  X+Y\right)
\end{align*}

\end{theorem}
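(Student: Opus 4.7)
The statement is literally the same as Theorem \ref{t6.1} (and Theorem \ref{t7.1}), so the plan is to reuse essentially the same short argument. The key observation is that at the infinitesimal level $d_1 \in D$, the exponential is completely pinned down by the tangent-vector action on $G$, and the additive structure of $\mathfrak{g}$ is built so as to coincide with the group multiplication in that infinitesimal range.

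Concretely, I would first invoke Proposition \ref{t4.5} to rewrite $\exp d_1(X+Y)$ as $(X+Y)_{d_1}$; this is the place where the exponential of an infinitesimal scalar multiple reduces to evaluating the tangent vector. Next I would apply Proposition \ref{t2.2}, which gives $(X+Y)_{d_1} = X_{d_1}.Y_{d_1}$ (this is the defining compatibility between the $\mathbb{R}$-module addition on $\mathfrak{g}$ and the group operation for a single infinitesimal). Finally, I would invoke Proposition \ref{t4.5} again, now in the opposite direction, to rewrite $X_{d_1}.Y_{d_1}$ as $\exp d_1 X . \exp d_1 Y$, yielding the desired identity.

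There is no real obstacle here: the whole argument is a three-line chain of equalities, and it serves as the combinatorial base case for the more elaborate formulas of \S \ref{s8}. The only mild subtlety worth pointing out is that this base case does not yet require either logarithmic derivative; the use of $\delta^{\mathrm{left}}$ and $\delta^{\mathrm{right}}$ advertised for \S \ref{s8} will enter in the higher-order successors to this theorem, where one must account for the noncommutativity of the group beyond the $D$-infinitesimal regime. Here, Propositions \ref{t4.5} and \ref{t2.2} are all that is needed.
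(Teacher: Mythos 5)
Your argument is correct, but it is not the route the paper takes for Theorem \ref{t8.1}: there the proof is the one-line citation ``By Proposition \ref{t5.3}'', i.e.\ one observes that $\left[  d_{1}X,d_{1}Y\right]  =d_{1}^{2}\left[  X,Y\right]  =0$ (since $d_{1}^{2}=0$ and the bracket is bilinear), so the commuting-exponentials formula $\exp\,d_{1}X.\exp\,d_{1}Y=\exp\,\left(  d_{1}X+d_{1}Y\right)  =\exp\,d_{1}\left(  X+Y\right)$ applies directly. What you propose --- $\exp\,d_{1}\left(  X+Y\right)  =\left(  X+Y\right)  _{d_{1}}=X_{d_{1}}.Y_{d_{1}}=\exp\,d_{1}X.\exp\,d_{1}Y$ via Propositions \ref{t4.5} and \ref{t2.2} --- is precisely the paper's own proof of the identically-stated Theorem \ref{t6.1}, so it is certainly valid; it is the more elementary option, resting only on the infinitesimal facts about tangent vectors and needing no logarithmic derivative at all (Proposition \ref{t5.3}, by contrast, is itself proved with $\delta^{\mathrm{left}}$ and $\mathrm{Ad}$). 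What the paper's choice buys in \S \ref{s8} is stylistic consistency: the second approach is meant to run on the exponential-calculus machinery (Propositions \ref{t5.3}, \ref{t5.6}, Theorem \ref{t5.2}) rather than on the bare combinatorics of \S \ref{s2}--\S \ref{s4}, and quoting Proposition \ref{t5.3} also sets up the pattern used repeatedly in Theorems \ref{t8.2}--\ref{t8.4}, where sums $\left(  d_{1}+\cdots\right)  X+d_{k}X$ are split inside the exponential by the same commutation argument. Either proof is acceptable; yours simply re-proves the base case from first principles instead of reusing the \S \ref{s5} lemma.
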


\begin{proof}
By Proposition \ref{t5.3}.
\end{proof}

\begin{theorem}
\label{t8.2}Given $X,Y\in\mathfrak{g}$\ and $d_{1},d_{2}\in D$, we have
\begin{align*}
&  \exp\,\left(  d_{1}+d_{2}\right)  X.\exp\,\left(  d_{1}+d_{2}\right)  Y\\
&  =\exp\,\left(  d_{1}+d_{2}\right)  \left(  X+Y\right)  +\frac{1}{2}\left(
d_{1}+d_{2}\right)  ^{2}\left[  X,Y\right]
\end{align*}

\end{theorem}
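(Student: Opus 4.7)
The plan is to expand the two exponentials on the left-hand side into a product of four "pure" exponentials in $d_1 X$, $d_2 X$, $d_1 Y$, $d_2 Y$, swap the middle pair to pick up a bracket correction at order $d_1 d_2$, and then reassemble everything into a single exponential by repeated application of Proposition \ref{t5.3}.

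First I would observe that $[d_1 X, d_2 X] = d_1 d_2 [X, X] = 0$, so Proposition \ref{t5.3} gives $\exp\,(d_1+d_2)X = \exp\,d_1 X . \exp\,d_2 X$, and similarly $\exp\,(d_1+d_2)Y = \exp\,d_1 Y . \exp\,d_2 Y$. The left-hand side therefore equals
\[
\exp\,d_1 X . \exp\,d_2 X . \exp\,d_1 Y . \exp\,d_2 Y.
\]
Next I would apply Proposition \ref{t5.4} to interchange the middle pair $\exp\,d_2 X . \exp\,d_1 Y$, producing the factor $\exp\,d_1 d_2 [X,Y]$. Since $[d_1 d_2 [X,Y], d_2 Y] = d_1 d_2^2 [[X,Y], Y] = 0$, Proposition \ref{t5.3} lets me slide that correction past $\exp\,d_2 Y$ to the far right.

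After this rearrangement the expression reads $\exp\,d_1 X . \exp\,d_1 Y . \exp\,d_2 X . \exp\,d_2 Y . \exp\,d_1 d_2 [X,Y]$. By Theorem \ref{t8.1} the first two factors combine to $\exp\,d_1 (X+Y)$ and the next two to $\exp\,d_2 (X+Y)$; since $[d_1 (X+Y), d_2 (X+Y)] = 0$, a further use of Proposition \ref{t5.3} collapses them into $\exp\,(d_1+d_2)(X+Y)$. Finally, $[(d_1+d_2)(X+Y), d_1 d_2 [X,Y]]$ vanishes because every monomial carries either $d_1^2 d_2$ or $d_1 d_2^2$, so one last application of Proposition \ref{t5.3} merges the remaining two factors into $\exp\left((d_1+d_2)(X+Y) + d_1 d_2 [X,Y]\right)$. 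The second form of the theorem follows from $d_1 d_2 = \tfrac{1}{2}(d_1+d_2)^2$, which is Lemma \ref{t6.0} specialised to $n=m=2$.

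I do not expect a serious obstacle: the argument is essentially bookkeeping of bracket terms together with careful monitoring of which infinitesimal products vanish at each stage, the only nontrivial steps being the single use of Proposition \ref{t5.4} to generate the bracket correction and the verification that its interaction with every subsequent factor occurs at an order in the $d_i$'s that is forced to be zero. Everything else is a direct application of Proposition \ref{t5.3} and Theorem \ref{t8.1}, both of which have already been secured.
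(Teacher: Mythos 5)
Your argument is correct, but it is not the route the paper takes for Theorem \ref{t8.2}. The whole point of \S \ref{s8} is to derive the formula by means of the two logarithmic derivatives: the paper writes the left-hand side as $\exp d_{2}X.\exp d_{1}X.\exp d_{1}Y.\exp d_{2}Y$, absorbs $\exp d_{2}Y$ into the middle exponential using Theorem \ref{t5.2} (with $\delta^{\mathrm{left}}(\exp)(d_{1}(X+Y))(Y)=Y-\tfrac{1}{2}d_{1}[X,Y]$), then absorbs $\exp d_{2}X$ from the left using Theorem \ref{t5.6}, the two correction factors $\exp\pm\tfrac{1}{2}d_{1}d_{2}[X,Y]$ combining to give $d_{1}d_{2}[X,Y]$ in the exponent. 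You instead never touch the logarithmic derivatives: you expand both factors into four first-order exponentials via Proposition \ref{t5.3}, commute the middle pair once with Proposition \ref{t5.4} to generate $\exp d_{1}d_{2}[X,Y]$, check that this correction commutes (in the sense of Proposition \ref{t5.3}) with everything it must pass, and reassemble via Theorem \ref{t8.1} and Proposition \ref{t5.3}, finishing with $d_{1}d_{2}=\tfrac{1}{2}(d_{1}+d_{2})^{2}$ from Lemma \ref{t6.0}. Each of your vanishing-bracket checks ($d_{1}d_{2}^{2}=0$, $d_{1}^{2}d_{2}=0$, $[X,X]=0$) is sound, and the argument is a genuine forward derivation, so there is no gap. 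In effect you have reproduced, in forward direction, the shuffle mechanism of Theorem \ref{t7.2} in the first approach; what you gain is elementarity (only Propositions \ref{t5.3}, \ref{t5.4} and Theorem \ref{t8.1} are needed), while what the paper's method buys is a scheme that continues systematically to Theorems \ref{t8.3} and \ref{t8.4}, where the number of commutator shuffles your style of bookkeeping would require grows rapidly.
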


\begin{proof}
We have
\begin{align*}
&  \exp\,\left(  d_{1}+d_{2}\right)  X.\exp\,\left(  d_{1}+d_{2}\right)  Y\\
&  =\exp\,d_{1}X+d_{2}X.\exp\,d_{1}Y+d_{2}Y\\
&  =\exp\,d_{2}X.\exp\,d_{1}X.\exp\,d_{1}Y.\exp\,d_{2}Y\\
&  \left)  \text{By Proposition \ref{t5.3}}\right( \\
&  =\exp\,d_{2}X.\exp\,d_{1}\left(  X+Y\right)  .\exp\,d_{2}Y\\
&  \left)  \text{By Theorem \ref{t8.1}}\right( \\
&  =\exp\,d_{2}X.\exp\,d_{1}\left(  X+Y\right)  .\exp\,d_{2}\left\{
Y-\frac{1}{2}d_{1}\left[  X,Y\right]  \right\}  .\exp\,\frac{1}{2}d_{1}%
d_{2}\left[  X,Y\right] \\
&  =\exp\,d_{2}X.\exp\,d_{1}\left(  X+Y\right)  +d_{2}Y.\exp\,\frac{1}{2}%
d_{1}d_{2}\left[  X,Y\right] \\
&  \left)  \text{By Theorem \ref{t5.2}\ with }\delta^{\mathrm{left}}\left(
\exp\right)  \left(  d_{1}\left(  X+Y\right)  \right)  \left(  Y\right)
=Y-\frac{1}{2}d_{1}\left[  X,Y\right]  \right( \\
&  =\exp\,-\frac{1}{2}d_{1}d_{2}\left[  Y,X\right]  .\exp\,d_{2}\left\{
X+\frac{1}{2}d_{1}\left[  Y,X\right]  \right\}  .\exp\,d_{1}\left(
X+Y\right)  +d_{2}Y.\exp\,\frac{1}{2}d_{1}d_{2}\left[  X,Y\right] \\
&  =\exp\,-\frac{1}{2}d_{1}d_{2}\left[  Y,X\right]  .\exp\,\left(  d_{1}%
+d_{2}\right)  \left(  X+Y\right)  .\exp\,\frac{1}{2}d_{1}d_{2}\left[
X,Y\right] \\
&  \left)  \text{By Theorem \ref{t5.6}\ with }\delta^{\mathrm{right}}\left(
\exp\right)  \left(  d_{1}\left(  X+Y\right)  +d_{2}X\right)  \left(
X\right)  =X+\frac{1}{2}d_{1}\left[  Y,X\right]  \right( \\
&  =\exp\,\left(  d_{1}+d_{2}\right)  \left(  X+Y\right)  +d_{1}d_{2}\left[
X,Y\right]
\end{align*}

\end{proof}

\begin{theorem}
\label{t8.3}Given $X,Y\in\mathfrak{g}$\ and $d_{1},d_{2},d_{3}\in D$, we have
\begin{align*}
&  \exp\,\left(  d_{1}+d_{2}+d_{3}\right)  X.\exp\,\left(  d_{1}+d_{2}%
+d_{3}\right)  Y\\
&  =\exp\,\left(  d_{1}+d_{2}+d_{3}\right)  \left(  X+Y\right)  +\frac{1}%
{2}\left(  d_{1}+d_{2}+d_{3}\right)  ^{2}\left[  X,Y\right]  +\\
&  \frac{1}{12}\left(  d_{1}+d_{2}+d_{3}\right)  ^{3}\left[  X-Y,\left[
X,Y\right]  \right]
\end{align*}

\end{theorem}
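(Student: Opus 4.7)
The plan is to follow the strategy of the proof of Theorem \ref{t8.2}, now splitting the three-infinitesimal sum as $(d_{1}+d_{2})+d_{3}$ so that Theorem \ref{t8.2} itself handles the $(d_{1}+d_{2})$-part and only one further infinitesimal $d_{3}$ has to be absorbed on each side.

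First, by Proposition \ref{t5.3} (since $(d_{1}+d_{2})X$ commutes with $d_{3}X$, and likewise for $Y$),
\[
\exp\,(d_{1}{+}d_{2}{+}d_{3})X\cdot\exp\,(d_{1}{+}d_{2}{+}d_{3})Y=\exp\,d_{3}X\cdot\exp\,(d_{1}{+}d_{2})X\cdot\exp\,(d_{1}{+}d_{2})Y\cdot\exp\,d_{3}Y,
\]
and Theorem \ref{t8.2} collapses the central product to $\exp W$ with $W=(d_{1}+d_{2})(X+Y)+d_{1}d_{2}[X,Y]$. I would then mimic the two absorption moves of Theorem \ref{t8.2}: pull $\exp\,d_{3}Y$ leftward into $\exp W$ using the left logarithmic derivative (Theorem \ref{t5.2}), and symmetrically pull $\exp\,d_{3}X$ rightward using the right logarithmic derivative (Theorem \ref{t5.6}). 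The expansions terminate at the $(\mathrm{ad}\,W)^{2}$ level, because each extra factor of $W$ carries an extra $d_{1}$ or $d_{2}$ and no $d$-monomial of total degree exceeding $3$ can survive, the direct computation yielding
\[
[W,Y]=(d_{1}{+}d_{2})[X,Y]+d_{1}d_{2}[[X,Y],Y],\qquad [W,[W,Y]]=2d_{1}d_{2}[X+Y,[X,Y]]
\]
(together with their $\delta^{\mathrm{right}}$-analogues obtained by swapping $X$ for $Y$).

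Every correction factor produced by these absorptions is of the form $\exp(d_{3}\cdot(\text{something}))$; since $d_{3}^{2}=0$, any two such factors commute and can be collected freely by Proposition \ref{t5.3}. Gathering everything into a single exponential, I would obtain
\[
\exp\bigl((d_{1}{+}d_{2}{+}d_{3})(X+Y)+(d_{1}d_{2}+d_{1}d_{3}+d_{2}d_{3})[X,Y]+\tfrac{1}{2}d_{1}d_{2}d_{3}\,[X-Y,[X,Y]]\bigr),
\]
and Lemma \ref{t6.0} rewrites the two symmetric-polynomial coefficients as $\tfrac{1}{2}(d_{1}{+}d_{2}{+}d_{3})^{2}$ and $\tfrac{1}{6}(d_{1}{+}d_{2}{+}d_{3})^{3}$, so that the triple-bracket coefficient becomes $\tfrac{1}{12}(d_{1}{+}d_{2}{+}d_{3})^{3}$, giving the second displayed form.

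The main obstacle is the third-order bookkeeping. Expanding $\delta^{\mathrm{left}}$ and $\delta^{\mathrm{right}}$ to order $(\mathrm{ad}\,W)^{2}$ deposits several $d_{1}d_{2}d_{3}$-scaled three-fold Lie words, which after using only bilinearity and antisymmetry ($[[X,Y],Y]=-[Y,[X,Y]]$ and $[X+Y,[X,Y]]=[X,[X,Y]]+[Y,[X,Y]]$) reduce to linear combinations of the two basic brackets $[X,[X,Y]]$ and $[Y,[X,Y]]$. The arithmetic — mixing the alternating $(-1)^{p}/(p+1)!$ coefficients of Theorem \ref{t5.2} with the positive $1/(p+1)!$ coefficients of Theorem \ref{t5.6}, and combining contributions sourced by the $(d_{1}+d_{2})(X+Y)$-part of $W$ with those sourced by the $d_{1}d_{2}[X,Y]$-part — must collapse cleanly to exactly $\tfrac{1}{2}[X-Y,[X,Y]]$. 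No Lie-algebraic identity beyond antisymmetry is required at this order, but careful attention is needed so that the two sides of the symmetric absorption do not inadvertently be double-counted.
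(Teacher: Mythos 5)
Your plan is essentially the paper's own proof: the same splitting $(d_{1}+d_{2})+d_{3}$, the same use of Proposition \ref{t5.3} and Theorem \ref{t8.2} to reduce everything to $\exp d_{3}X.\exp W.\exp d_{3}Y$ with $W=(d_{1}+d_{2})(X+Y)+d_{1}d_{2}[X,Y]$, the same iterated absorption of the two outer factors via Theorems \ref{t5.2} and \ref{t5.6} (terminating at $(\mathrm{ad}\,W)^{2}$ for exactly the degree reasons you give), and the same final collection of the $d_{1}d_{2}d_{3}$-debris rewritten through Lemma \ref{t6.0}. One small caution: the right-derivative data are not literally the $X\leftrightarrow Y$ swap of the left-derivative data, since the $d_{1}d_{2}[X,Y]$ part of $W$ changes sign under that swap --- e.g.\ $[W,X]=(d_{1}+d_{2})[Y,X]+d_{1}d_{2}[[X,Y],X]$, not $(d_{1}+d_{2})[Y,X]+d_{1}d_{2}[[Y,X],X]$ --- but this does not derail the bookkeeping, whose outcome the paper's computation confirms to be $\tfrac{1}{2}d_{1}d_{2}d_{3}[X-Y,[X,Y]]$ as you predict.
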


\begin{proof}
We have
\begin{align*}
&  \exp\,\left(  d_{1}+d_{2}+d_{3}\right)  X.\exp\,\left(  d_{1}+d_{2}%
+d_{3}\right)  Y\\
&  =\exp\,\left(  d_{1}+d_{2}\right)  X+d_{3}X.\exp\,\left(  d_{1}%
+d_{2}\right)  Y+d_{3}Y\\
&  =\exp\,d_{3}X.\exp\,\left(  d_{1}+d_{2}\right)  X.\exp\,\left(  d_{1}%
+d_{2}\right)  Y.\exp\,d_{3}Y\\
&  \left)  \text{By Proposition \ref{t5.3}}\right(  \\
&  =\exp\,d_{3}X.\exp\,\left(  d_{1}+d_{2}\right)  \left(  X+Y\right)
+\frac{1}{2}\left(  d_{1}+d_{2}\right)  ^{2}\left[  X,Y\right]  .\exp
\,d_{3}Y\\
&  \left)  \text{By Theorem \ref{t8.2}}\right(  \\
&  =\exp\,d_{3}X.\exp\,\left(  d_{1}+d_{2}\right)  \left(  X+Y\right)
+\frac{1}{2}\left(  d_{1}+d_{2}\right)  ^{2}\left[  X,Y\right]  .\\
&  \exp\,d_{3}\left\{
\begin{array}
[c]{c}%
Y-\frac{1}{2}\left(  \left(  d_{1}+d_{2}\right)  \left[  X,Y\right]  +\frac
{1}{2}\left(  d_{1}+d_{2}\right)  ^{2}\left[  \left[  X,Y\right]  ,Y\right]
\right)  +\\
\frac{1}{6}\left(  d_{1}+d_{2}\right)  ^{2}\left[  X+Y,\left[  X,Y\right]
\right]
\end{array}
\right\}  .\\
&  \exp\,d_{3}\left\{
\begin{array}
[c]{c}%
\frac{1}{2}\left(  \left(  d_{1}+d_{2}\right)  \left[  X,Y\right]  +\frac
{1}{2}\left(  d_{1}+d_{2}\right)  ^{2}\left[  \left[  X,Y\right]  ,Y\right]
\right)  -\\
\frac{1}{6}\left(  d_{1}+d_{2}\right)  ^{2}\left[  X+Y,\left[  X,Y\right]
\right]
\end{array}
\right\}  \\
&  =\exp\,d_{3}X.\exp\,\left(  d_{1}+d_{2}\right)  \left(  X+Y\right)
+\frac{1}{2}\left(  d_{1}+d_{2}\right)  ^{2}\left[  X,Y\right]  +d_{3}Y.\\
&  \exp\,d_{3}\left\{
\begin{array}
[c]{c}%
\frac{1}{2}\left(  \left(  d_{1}+d_{2}\right)  \left[  X,Y\right]  +\frac
{1}{2}\left(  d_{1}+d_{2}\right)  ^{2}\left[  \left[  X,Y\right]  ,Y\right]
\right)  -\\
\frac{1}{6}\left(  d_{1}+d_{2}\right)  ^{2}\left[  X+Y,\left[  X,Y\right]
\right]
\end{array}
\right\}  \\
&  \left)
\begin{array}
[c]{c}%
\text{By Theorem \ref{t5.2}\ with}\\%
\begin{array}
[c]{c}%
\delta^{\mathrm{left}}\left(  \exp\right)  \left(  \left(  d_{1}+d_{2}\right)
\left(  X+Y\right)  +\frac{1}{2}\left(  d_{1}+d_{2}\right)  ^{2}\left[
X,Y\right]  \right)  \left(  Y\right)  \\
=Y-\frac{1}{2}\left(  \left(  d_{1}+d_{2}\right)  \left[  X,Y\right]
+\frac{1}{2}\left(  d_{1}+d_{2}\right)  ^{2}\left[  \left[  X,Y\right]
,Y\right]  \right)  +\\
\frac{1}{6}\left(  d_{1}+d_{2}\right)  ^{2}\left[  X+Y,\left[  X,Y\right]
\right]
\end{array}
\end{array}
\right(
\end{align*}
We keep on.
\begin{align*}
&  =\exp\,d_{3}X.\exp\,\left(  d_{1}+d_{2}\right)  \left(  X+Y\right)
+\frac{1}{2}\left(  d_{1}+d_{2}\right)  ^{2}\left[  X,Y\right]  +d_{3}Y.\\
&  \exp\,d_{3}\left\{  \frac{1}{2}\left(  d_{1}+d_{2}\right)  \left[
X,Y\right]  -\frac{1}{4}\left(  d_{1}+d_{2}\right)  ^{2}\left[  X+Y,\left[
X,Y\right]  \right]  \right\}  .\\
&  \exp\,d_{3}\left\{
\begin{array}
[c]{c}%
\frac{1}{4}\left(  d_{1}+d_{2}\right)  ^{2}\left[  \left[  X,Y\right]
,Y\right]  -\frac{1}{6}\left(  d_{1}+d_{2}\right)  ^{2}\left[  X+Y,\left[
X,Y\right]  \right]  +\\
\frac{1}{4}\left(  d_{1}+d_{2}\right)  ^{2}\left[  X+Y,\left[  X,Y\right]
\right]
\end{array}
\right\}  \\
&  =\exp\,d_{3}X.\\
&  \exp\,\left(  d_{1}+d_{2}\right)  \left(  X+Y\right)  +\frac{1}{2}\left(
d_{1}+d_{2}\right)  ^{2}\left[  X,Y\right]  +d_{3}\left\{  Y+\frac{1}%
{2}\left(  d_{1}+d_{2}\right)  \left[  X,Y\right]  \right\}  .\\
&  \exp\,d_{3}\left\{
\begin{array}
[c]{c}%
\frac{1}{4}\left(  d_{1}+d_{2}\right)  ^{2}\left[  \left[  X,Y\right]
,Y\right]  -\frac{1}{6}\left(  d_{1}+d_{2}\right)  ^{2}\left[  X+Y,\left[
X,Y\right]  \right]  +\\
\frac{1}{4}\left(  d_{1}+d_{2}\right)  ^{2}\left[  X+Y,\left[  X,Y\right]
\right]
\end{array}
\right\}  \\
&  \left)
\begin{array}
[c]{c}%
\text{By Theorem \ref{t5.2}\ with }\\
\delta^{\mathrm{left}}\left(  \exp\right)  \left(  \left(  d_{1}+d_{2}\right)
\left(  X+Y\right)  +\frac{1}{2}\left(  d_{1}+d_{2}\right)  ^{2}\left[
X,Y\right]  +d_{3}Y\right)  \\
\left(  \frac{1}{2}\left(  d_{1}+d_{2}\right)  \left[  X,Y\right]  \right)  \\
=\frac{1}{2}\left(  d_{1}+d_{2}\right)  \left[  X,Y\right]  -\frac{1}%
{4}\left(  d_{1}+d_{2}\right)  ^{2}\left[  X+Y,\left[  X,Y\right]  \right]
\end{array}
\right(
\end{align*}
We keep on again.
\begin{align*}
&  =\exp\,d_{3}\left\{
\begin{array}
[c]{c}%
-\frac{1}{2}\left(  \left(  d_{1}+d_{2}\right)  \left[  Y,X\right]  +\frac
{1}{2}\left(  d_{1}+d_{2}\right)  ^{2}\left[  \left[  X,Y\right]  ,X\right]
\right)  -\\
\frac{1}{6}\left(  d_{1}+d_{2}\right)  ^{2}\left[  X+Y,\left[  Y,X\right]
\right]
\end{array}
\right\}  .\\
&  \exp\,d_{3}\left\{
\begin{array}
[c]{c}%
X+\frac{1}{2}\left(  \left(  d_{1}+d_{2}\right)  \left[  Y,X\right]  +\frac
{1}{2}\left(  d_{1}+d_{2}\right)  ^{2}\left[  \left[  X,Y\right]  ,X\right]
\right)  +\\
\frac{1}{6}\left(  d_{1}+d_{2}\right)  ^{2}\left[  X+Y,\left[  Y,X\right]
\right]
\end{array}
\right\}  .\\
&  \exp\,\left(  d_{1}+d_{2}\right)  \left(  X+Y\right)  +\frac{1}{2}\left(
d_{1}+d_{2}\right)  ^{2}\left[  X,Y\right]  +d_{3}\left\{  Y+\frac{1}%
{2}\left(  d_{1}+d_{2}\right)  \left[  X,Y\right]  \right\}  .\\
&  \exp\,d_{3}\left\{
\begin{array}
[c]{c}%
\frac{1}{4}\left(  d_{1}+d_{2}\right)  ^{2}\left[  \left[  X,Y\right]
,Y\right]  -\frac{1}{6}\left(  d_{1}+d_{2}\right)  ^{2}\left[  X+Y,\left[
X,Y\right]  \right]  +\\
\frac{1}{4}\left(  d_{1}+d_{2}\right)  ^{2}\left[  X+Y,\left[  X,Y\right]
\right]
\end{array}
\right\}  \\
&  =\exp\,d_{3}\left\{
\begin{array}
[c]{c}%
-\frac{1}{2}\left(  \left(  d_{1}+d_{2}\right)  \left[  Y,X\right]  +\frac
{1}{2}\left(  d_{1}+d_{2}\right)  ^{2}\left[  \left[  X,Y\right]  ,X\right]
\right)  -\\
\frac{1}{6}\left(  d_{1}+d_{2}\right)  ^{2}\left[  X+Y,\left[  Y,X\right]
\right]
\end{array}
\right\}  .\\
&  \exp\,\left(  d_{1}+d_{2}\right)  \left(  X+Y\right)  +\frac{1}{2}\left(
d_{1}+d_{2}\right)  ^{2}\left[  X,Y\right]  +d_{3}\left\{  \left(  X+Y\right)
+\frac{1}{2}\left(  d_{1}+d_{2}\right)  \left[  X,Y\right]  \right\}  .\\
&  \exp\,d_{3}\left\{
\begin{array}
[c]{c}%
\frac{1}{4}\left(  d_{1}+d_{2}\right)  ^{2}\left[  \left[  X,Y\right]
,Y\right]  -\frac{1}{6}\left(  d_{1}+d_{2}\right)  ^{2}\left[  X+Y,\left[
X,Y\right]  \right]  +\\
\frac{1}{4}\left(  d_{1}+d_{2}\right)  ^{2}\left[  X+Y,\left[  X,Y\right]
\right]
\end{array}
\right\}  \\
&  \left)
\begin{array}
[c]{c}%
\text{By Theorem \ref{t5.6}\ with }\\
\delta^{\mathrm{right}}\left(  \exp\right)  \left(
\begin{array}
[c]{c}%
\left(  d_{1}+d_{2}\right)  \left(  X+Y\right)  +\frac{1}{2}\left(
d_{1}+d_{2}\right)  ^{2}\left[  X,Y\right]  +\\
d_{3}\left\{  Y+\frac{1}{2}\left(  d_{1}+d_{2}\right)  \left[  X,Y\right]
\right\}
\end{array}
\right)  \left(  X\right)  \\
=X+\frac{1}{2}\left(  \left(  d_{1}+d_{2}\right)  \left[  Y,X\right]
+\frac{1}{2}\left(  d_{1}+d_{2}\right)  ^{2}\left[  \left[  X,Y\right]
,X\right]  \right)  +\\
\frac{1}{6}\left(  d_{1}+d_{2}\right)  ^{2}\left[  X+Y,\left[  Y,X\right]
\right]
\end{array}
\right(  \\
&  =\exp\,d_{3}\left\{
\begin{array}
[c]{c}%
-\frac{1}{4}\left(  d_{1}+d_{2}\right)  ^{2}\left[  \left[  X,Y\right]
,X\right]  -\frac{1}{6}\left(  d_{1}+d_{2}\right)  ^{2}\left[  X+Y,\left[
Y,X\right]  \right]  +\\
\frac{1}{4}\left(  d_{1}+d_{2}\right)  ^{2}\left[  X+Y,\left[  Y,X\right]
\right]
\end{array}
\right\}  .\\
&  \exp\,d_{3}\left\{  -\frac{1}{2}\left(  d_{1}+d_{2}\right)  \left[
Y,X\right]  -\frac{1}{4}\left(  d_{1}+d_{2}\right)  ^{2}\left[  X+Y,\left[
Y,X\right]  \right]  \right\}  .\\
&  \exp\,\left(  d_{1}+d_{2}\right)  \left(  X+Y\right)  +\frac{1}{2}\left(
d_{1}+d_{2}\right)  ^{2}\left[  X,Y\right]  +d_{3}\left\{  \left(  X+Y\right)
+\frac{1}{2}\left(  d_{1}+d_{2}\right)  \left[  X,Y\right]  \right\}  .\\
&  \exp\,d_{3}\left\{
\begin{array}
[c]{c}%
\frac{1}{4}\left(  d_{1}+d_{2}\right)  ^{2}\left[  \left[  X,Y\right]
,Y\right]  -\frac{1}{6}\left(  d_{1}+d_{2}\right)  ^{2}\left[  X+Y,\left[
X,Y\right]  \right]  +\\
\frac{1}{4}\left(  d_{1}+d_{2}\right)  ^{2}\left[  X+Y,\left[  X,Y\right]
\right]
\end{array}
\right\}  \\
&  =\exp\,d_{3}\left\{
\begin{array}
[c]{c}%
-\frac{1}{4}\left(  d_{1}+d_{2}\right)  ^{2}\left[  \left[  X,Y\right]
,X\right]  -\frac{1}{6}\left(  d_{1}+d_{2}\right)  ^{2}\left[  X+Y,\left[
Y,X\right]  \right]  +\\
\frac{1}{4}\left(  d_{1}+d_{2}\right)  ^{2}\left[  X+Y,\left[  Y,X\right]
\right]
\end{array}
\right\}  .\\
&  \exp\,\left(  d_{1}+d_{2}+d_{3}\right)  \left(  X+Y\right)  +\frac{1}%
{2}\left(  d_{1}+d_{2}+d_{3}\right)  ^{2}\left[  X,Y\right]  .\\
&  \exp\,d_{3}\left\{
\begin{array}
[c]{c}%
\frac{1}{4}\left(  d_{1}+d_{2}\right)  ^{2}\left[  \left[  X,Y\right]
,Y\right]  -\frac{1}{6}\left(  d_{1}+d_{2}\right)  ^{2}\left[  X+Y,\left[
X,Y\right]  \right]  +\\
\frac{1}{4}\left(  d_{1}+d_{2}\right)  ^{2}\left[  X+Y,\left[  X,Y\right]
\right]
\end{array}
\right\}  \\
&  \left)
\begin{array}
[c]{c}%
\text{By Theorem \ref{t5.6}\ with }\\
\delta^{\mathrm{right}}\left(  \exp\right)  \left(
\begin{array}
[c]{c}%
\,\left(  d_{1}+d_{2}\right)  \left(  X+Y\right)  +\\
\frac{1}{2}\left(  d_{1}+d_{2}\right)  ^{2}\left[  X,Y\right]  +\\
d_{3}\left\{  \left(  X+Y\right)  +\frac{1}{2}\left(  d_{1}+d_{2}\right)
\left[  X,Y\right]  \right\}
\end{array}
\right)  \left(  -\frac{1}{2}\left(  d_{1}+d_{2}\right)  \left[  Y,X\right]
\right)  \\
=-\frac{1}{2}\left(  d_{1}+d_{2}\right)  \left[  Y,X\right]  -\frac{1}%
{4}\left(  d_{1}+d_{2}\right)  ^{2}\left[  X+Y,\left[  Y,X\right]  \right]
\end{array}
\right(  \\
&  =\exp\,\left(  d_{1}+d_{2}+d_{3}\right)  \left(  X+Y\right)  +\frac{1}%
{2}\left(  d_{1}+d_{2}+d_{3}\right)  ^{2}\left[  X,Y\right]  +\\
&  \frac{1}{4}\left(  d_{1}+d_{2}\right)  ^{2}d_{3}\left(  \left[  X,\left[
X,Y\right]  \right]  -\left[  Y,\left[  X,Y\right]  \right]  \right)  \\
&  =\exp\,\left(  d_{1}+d_{2}+d_{3}\right)  \left(  X+Y\right)  +\frac{1}%
{2}\left(  d_{1}+d_{2}+d_{3}\right)  ^{2}\left[  X,Y\right]  +\\
&  \frac{1}{12}\left(  d_{1}+d_{2}+d_{3}\right)  ^{3}\left[  X-Y,\left[
X,Y\right]  \right]
\end{align*}

\end{proof}

\begin{theorem}
\label{t8.4}Given $X,Y\in\mathfrak{g}$\ and $d_{1},d_{2},d_{3},d_{4}\in D$, we
have
\begin{align*}
&  \exp\,\left(  d_{1}+d_{2}+d_{3}+d_{4}\right)  X.\exp\,\left(  d_{1}%
+d_{2}+d_{3}+d_{4}\right)  Y\\
&  =\exp\,\left(  d_{1}+d_{2}+d_{3}+d_{4}\right)  \left(  X+Y\right)
+\frac{\left(  d_{1}+d_{2}+d_{3}+d_{4}\right)  ^{2}}{2}\left[  X,Y\right]  +\\
&  \frac{\left(  d_{1}+d_{2}+d_{3}+d_{4}\right)  ^{3}}{12}\left[  X-Y,\left[
X,Y\right]  \right]  -\\
&  \frac{\left(  d_{1}+d_{2}+d_{3}+d_{4}\right)  ^{4}}{48}\left(
\begin{array}
[c]{c}%
\left[  X,\left[  Y,\left[  X,Y\right]  \right]  \right]  +\left[  Y,\left[
X,\left[  X,Y\right]  \right]  \right]  +\\
\left[  X+Y,\left[  X+Y,\left[  X,Y\right]  \right]  \right]
\end{array}
\right)
\end{align*}

\end{theorem}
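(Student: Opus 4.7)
The plan is to extend the recursive scheme of Theorems \ref{t8.2} and \ref{t8.3} by one more step. First, I would detach $\exp d_{4}X$ on the left and $\exp d_{4}Y$ on the right by means of Proposition \ref{t5.3} (each pair involves only scalar multiples of a single generator and so commutes), obtaining
\begin{align*}
&\exp(d_{1}+d_{2}+d_{3}+d_{4})X.\exp(d_{1}+d_{2}+d_{3}+d_{4})Y\\
&=\exp d_{4}X.\exp(d_{1}+d_{2}+d_{3})X.\exp(d_{1}+d_{2}+d_{3})Y.\exp d_{4}Y.
\end{align*}
Theorem \ref{t8.3} then collapses the middle two factors into a single $\exp W$, where
\[
W=(d_{1}+d_{2}+d_{3})(X+Y)+\tfrac{1}{2}(d_{1}+d_{2}+d_{3})^{2}[X,Y]+\tfrac{1}{12}(d_{1}+d_{2}+d_{3})^{3}[X-Y,[X,Y]].
\]

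Next I would absorb $\exp d_{4}Y$ into $\exp W$ from the right by the same device used in the proof of Theorem \ref{t8.3}: write $\exp d_{4}Y=\exp d_{4}(Y-c).\exp d_{4}c$, where $c$ is chosen so that $Y-c=\delta^{\mathrm{left}}(\exp)(W)(Y)$. By Theorem \ref{t5.2} this means $c=\tfrac{1}{2}[W,Y]-\tfrac{1}{6}[W,[W,Y]]+\tfrac{1}{24}[W,[W,[W,Y]]]$, which must be expanded only to the order at which the subsequent $d_{4}$-multiplication still survives the nilpotency $d_{i}^{2}=0$. Then $\exp W.\exp d_{4}(Y-c)=\exp(W+d_{4}Y)$ directly by Theorem \ref{t5.2}, while the stray factor $\exp d_{4}c$ gets pushed past the remaining atomic exponentials via Proposition \ref{t5.4}, producing a batch of cubic and quartic commutator corrections. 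Symmetrically, the leftmost $\exp d_{4}X$ is then absorbed into the resulting exponent by the analogous splitting trick based on the right logarithmic derivative (Theorem \ref{t5.6}), now producing corrections built out of $[Y,X]$ and iterated brackets of $X+Y$ with $[Y,X]$.

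The principal obstacle is bookkeeping rather than conceptual. $W$ already contains terms of orders $1$, $2$, and $3$ in the $d_{i}$'s, so the logarithmic-derivative series of Theorems \ref{t5.2} and \ref{t5.6} must be expanded to depth $3$ in $\mathrm{ad}\,W$ (respectively $\mathrm{ad}\,V$, for the intermediate exponent $V$) in order to capture every contribution of total degree at most $4$ in the $d_{i}$'s before the remaining monomials are killed by nilpotency. After the repeated applications of Propositions \ref{t5.3} and \ref{t5.4} needed to pipeline the correction factors to their final positions, one is left with a sum of nested brackets involving $[X,[X,[X,Y]]]$, $[Y,[Y,[X,Y]]]$, $[X,[Y,[X,Y]]]$, $[Y,[X,[X,Y]]]$, and $[X+Y,[X+Y,[X,Y]]]$. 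Consolidation to the stated compact form relies on Lemma \ref{t2.5} (which identifies $[X,[Y,[X,Y]]]$ with $[Y,[X,[X,Y]]]$) together with the Jacobi identity. A useful sanity check is that the resulting quartic coefficient, expanded via Lemma \ref{t2.5}, should reproduce the quartic coefficient already obtained in Theorem \ref{t7.4} by the first approach.
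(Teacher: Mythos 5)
Your outline is essentially the paper's own (second-approach) proof: peel off $\exp\,d_{4}X$ and $\exp\,d_{4}Y$ with Proposition \ref{t5.3}, collapse the middle with Theorem \ref{t8.3} into $\exp\,W$, absorb $\exp\,d_{4}Y$ by splitting it as $\exp\,d_{4}\left(  Y-c\right)  .\exp\,d_{4}c$ with $Y-c=\delta^{\mathrm{left}}\left(  \exp\right)  \left(  W\right)  \left(  Y\right)$ computed from Theorem \ref{t5.2}, and absorb $\exp\,d_{4}X$ symmetrically via Theorem \ref{t5.6}. Your depth count is also right: since $\left(  d_{1}+d_{2}+d_{3}\right)  ^{3}\neq0$ while $\left(  d_{1}+d_{2}+d_{3}\right)  ^{4}=0$, the logarithmic-derivative series must be carried to $\left(  \mathrm{ad}\right)  ^{3}$, and your closing consistency check against Theorem \ref{t7.4} (via Lemma \ref{t2.5}) is valid.

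The one step that would not go through as you describe it is the disposal of the stray factor $\exp\,d_{4}c$ ``by pushing it past the remaining atomic exponentials via Proposition \ref{t5.4}.'' After the collapse there are no atomic exponentials left for it to pass: its neighbour is $\exp\left(  W+d_{4}Y\right)$, which is not of the form $\exp\,dZ$ with $d\in D$, and Proposition \ref{t5.4} is stated only for a product of two such atomic factors (indeed it is never invoked in \S\ref{s8}; it belongs to the machinery of \S\ref{s7}). More to the point, the lowest-order part of $d_{4}c$, namely $\frac{1}{2}d_{4}\left(  d_{1}+d_{2}+d_{3}\right)  \left[  X,Y\right]$, cannot merely be transported somewhere --- it must be absorbed into the main exponent, since the target contains the cross terms $d_{4}d_{i}\left[  X,Y\right]$. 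The correct handling, and what the paper does, is to iterate exactly the splitting device you already used for $\exp\,d_{4}Y$: split each stray factor again by Theorem \ref{t5.2} (resp.\ Theorem \ref{t5.6}), merge its absorbable part into the big exponent, and note that the new stray is one order higher in the $d_{i}$'s; after finitely many rounds the residue has total degree $4$, all brackets with it vanish by nilpotency, and Proposition \ref{t5.3} completes the merge. With that repair (and noting that Lemma \ref{t2.5} is not needed for the stated form, which leaves the quartic bracket sum unconsolidated), your plan coincides with the paper's proof.
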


\begin{proof}
We have
\begin{align*}
&  \exp\,\left(  d_{1}+d_{2}+d_{3}+d_{4}\right)  X.\exp\,\left(  d_{1}%
+d_{2}+d_{3}+d_{4}\right)  Y\\
&  =\exp\,\left(  d_{1}+d_{2}+d_{3}\right)  X+d_{4}X.\exp\,\left(  d_{1}%
+d_{2}+d_{3}\right)  Y+d_{4}Y\\
&  =\exp\,d_{4}X.\exp\,\left(  d_{1}+d_{2}+d_{3}\right)  X.\exp\,\left(
d_{1}+d_{2}+d_{3}\right)  Y.\exp\,d_{4}Y\\
&  \left)  \text{By Proposition \ref{t5.3}}\right( \\
&  =\exp\,d_{4}X.\\
&  \exp\,\left(  d_{1}+d_{2}+d_{3}\right)  \left(  X+Y\right)  +\frac{1}%
{2}\left(  d_{1}+d_{2}+d_{3}\right)  ^{2}\left[  X,Y\right]  +\\
&  \frac{1}{12}\left(  d_{1}+d_{2}+d_{3}\right)  ^{3}\left[  X-Y,\left[
X,Y\right]  \right]  .\\
&  \exp\,d_{4}Y\\
&  \left)  \text{By Theorem \ref{t8.3}}\right( \\
&  =\exp\,d_{4}X.\\
&  \exp\,\left(  d_{1}+d_{2}+d_{3}\right)  \left(  X+Y\right)  +\frac{1}%
{2}\left(  d_{1}+d_{2}+d_{3}\right)  ^{2}\left[  X,Y\right]  +\\
&  \frac{1}{12}\left(  d_{1}+d_{2}+d_{3}\right)  ^{3}\left[  X-Y,\left[
X,Y\right]  \right]  .\\
&  \exp\,d_{4}\left\{
\begin{array}
[c]{c}%
Y-\frac{1}{2}\left(
\begin{array}
[c]{c}%
\left(  d_{1}+d_{2}+d_{3}\right)  \left[  X,Y\right]  +\frac{1}{2}\left(
d_{1}+d_{2}+d_{3}\right)  ^{2}\left[  \left[  X,Y\right]  ,Y\right]  +\\
\frac{1}{12}\left(  d_{1}+d_{2}+d_{3}\right)  ^{3}\left[  \left[  X-Y,\left[
X,Y\right]  \right]  ,Y\right]
\end{array}
\right)  +\\
\frac{1}{6}\left(
\begin{array}
[c]{c}%
\left(  d_{1}+d_{2}+d_{3}\right)  ^{2}\left[  X+Y,\left[  X,Y\right]  \right]
+\\
\frac{1}{2}\left(  d_{1}+d_{2}+d_{3}\right)  ^{3}\left[  X+Y,\left[  \left[
X,Y\right]  ,Y\right]  \right]
\end{array}
\right)
\end{array}
\right\}  .\\
&  \exp\,d_{4}\left\{
\begin{array}
[c]{c}%
\frac{1}{2}\left(
\begin{array}
[c]{c}%
\left(  d_{1}+d_{2}+d_{3}\right)  \left[  X,Y\right]  +\frac{1}{2}\left(
d_{1}+d_{2}+d_{3}\right)  ^{2}\left[  \left[  X,Y\right]  ,Y\right]  +\\
\frac{1}{12}\left(  d_{1}+d_{2}+d_{3}\right)  ^{3}\left[  \left[  X-Y,\left[
X,Y\right]  \right]  ,Y\right]
\end{array}
\right)  -\\
\frac{1}{6}\left(
\begin{array}
[c]{c}%
\left(  d_{1}+d_{2}+d_{3}\right)  ^{2}\left[  X+Y,\left[  X,Y\right]  \right]
+\\
\frac{1}{2}\left(  d_{1}+d_{2}+d_{3}\right)  ^{3}\left[  X+Y,\left[  \left[
X,Y\right]  ,Y\right]  \right]
\end{array}
\right)
\end{array}
\right\} \\
&  =\exp\,d_{4}X.\\
&  \exp\,\left(  d_{1}+d_{2}+d_{3}\right)  \left(  X+Y\right)  +\frac{1}%
{2}\left(  d_{1}+d_{2}+d_{3}\right)  ^{2}\left[  X,Y\right]  +\\
&  \frac{1}{12}\left(  d_{1}+d_{2}+d_{3}\right)  ^{3}\left[  X-Y,\left[
X,Y\right]  \right]  +d_{4}Y.\\
&  \exp\,d_{4}\left\{
\begin{array}
[c]{c}%
\frac{1}{2}\left(
\begin{array}
[c]{c}%
\left(  d_{1}+d_{2}+d_{3}\right)  \left[  X,Y\right]  +\frac{1}{2}\left(
d_{1}+d_{2}+d_{3}\right)  ^{2}\left[  \left[  X,Y\right]  ,Y\right]  +\\
\frac{1}{12}\left(  d_{1}+d_{2}+d_{3}\right)  ^{3}\left[  \left[  X-Y,\left[
X,Y\right]  \right]  ,Y\right]
\end{array}
\right)  -\\
\frac{1}{6}\left(
\begin{array}
[c]{c}%
\left(  d_{1}+d_{2}+d_{3}\right)  ^{2}\left[  X+Y,\left[  X,Y\right]  \right]
+\\
\frac{1}{2}\left(  d_{1}+d_{2}+d_{3}\right)  ^{3}\left[  X+Y,\left[  \left[
X,Y\right]  ,Y\right]  \right]
\end{array}
\right)
\end{array}
\right\} \\
&  \left)
\begin{array}
[c]{c}%
\text{By Theorem \ref{t5.2}\ with }\\
\delta^{\mathrm{left}}\left(  \exp\right)  \left(
\begin{array}
[c]{c}%
\left(  d_{1}+d_{2}+d_{3}\right)  \left(  X+Y\right)  +\frac{1}{2}\left(
d_{1}+d_{2}+d_{3}\right)  ^{2}\left[  X,Y\right]  +\\
\frac{1}{12}\left(  d_{1}+d_{2}+d_{3}\right)  ^{3}\left[  X-Y,\left[
X,Y\right]  \right]
\end{array}
\right)  \left(  Y\right) \\
=Y-\frac{1}{2}\left(
\begin{array}
[c]{c}%
\left(  d_{1}+d_{2}+d_{3}\right)  \left[  X,Y\right]  +\frac{1}{2}\left(
d_{1}+d_{2}+d_{3}\right)  ^{2}\left[  \left[  X,Y\right]  ,Y\right]  +\\
\frac{1}{12}\left(  d_{1}+d_{2}+d_{3}\right)  ^{3}\left[  \left[  X-Y,\left[
X,Y\right]  \right]  ,Y\right]
\end{array}
\right)  +\\
\frac{1}{6}\left(
\begin{array}
[c]{c}%
\left(  d_{1}+d_{2}+d_{3}\right)  ^{2}\left[  X+Y,\left[  X,Y\right]  \right]
+\\
\frac{1}{2}\left(  d_{1}+d_{2}+d_{3}\right)  ^{3}\left[  X+Y,\left[  \left[
X,Y\right]  ,Y\right]  \right]
\end{array}
\right)
\end{array}
\right(
\end{align*}
We keep on.
\begin{align*}
&  =\exp\,d_{4}X.\\
&  \exp\,\left(  d_{1}+d_{2}+d_{3}\right)  \left(  X+Y\right)  +\frac{1}%
{2}\left(  d_{1}+d_{2}+d_{3}\right)  ^{2}\left[  X,Y\right]  +\\
&  \frac{1}{12}\left(  d_{1}+d_{2}+d_{3}\right)  ^{3}\left[  X-Y,\left[
X,Y\right]  \right]  +d_{4}Y.\\
&  \exp\,d_{4}\left\{
\begin{array}
[c]{c}%
\frac{1}{2}\left(  d_{1}+d_{2}+d_{3}\right)  \left[  X,Y\right]  +\frac{1}%
{4}\left(  d_{1}+d_{2}+d_{3}\right)  ^{2}\left[  \left[  X,Y\right]
,Y\right]  -\\
\frac{1}{6}\left(  d_{1}+d_{2}+d_{3}\right)  ^{2}\left[  X+Y,\left[
X,Y\right]  \right]  -\\
\frac{1}{2}\left(
\begin{array}
[c]{c}%
\frac{1}{2}\left(  d_{1}+d_{2}+d_{3}\right)  ^{2}\left[  X+Y,\left[
X,Y\right]  \right]  +\\
\frac{1}{4}\left(  d_{1}+d_{2}+d_{3}\right)  ^{3}\left[  X+Y,\left[  \left[
X,Y\right]  ,Y\right]  \right]  -\\
\frac{1}{6}\left(  d_{1}+d_{2}+d_{3}\right)  ^{3}\left[  X+Y,\left[
X+Y,\left[  X,Y\right]  \right]  \right]
\end{array}
\right)  +\\
\frac{1}{12}\left(  d_{1}+d_{2}+d_{3}\right)  ^{3}\left[  X+Y,\left[
X+Y,\left[  X,Y\right]  \right]  \right]
\end{array}
\right\}  .\\
&  \exp\,d_{4}\left\{
\begin{array}
[c]{c}%
\frac{1}{24}\left(  d_{1}+d_{2}+d_{3}\right)  ^{3}\left[  \left[  X-Y,\left[
X,Y\right]  \right]  ,Y\right]  -\\
\frac{1}{12}\left(  d_{1}+d_{2}+d_{3}\right)  ^{3}\left[  X+Y,\left[  \left[
X,Y\right]  ,Y\right]  \right]  +\\
\frac{1}{2}\left(
\begin{array}
[c]{c}%
\frac{1}{2}\left(  d_{1}+d_{2}+d_{3}\right)  ^{2}\left[  X+Y,\left[
X,Y\right]  \right]  +\\
\frac{1}{4}\left(  d_{1}+d_{2}+d_{3}\right)  ^{3}\left[  X+Y,\left[  \left[
X,Y\right]  ,Y\right]  \right]  -\\
\frac{1}{6}\left(  d_{1}+d_{2}+d_{3}\right)  ^{3}\left[  X+Y,\left[
X+Y,\left[  X,Y\right]  \right]  \right]
\end{array}
\right)  -\\
\frac{1}{12}\left(  d_{1}+d_{2}+d_{3}\right)  ^{3}\left[  X+Y,\left[
X+Y,\left[  X,Y\right]  \right]  \right]
\end{array}
\right\} \\
&  =\exp\,d_{4}X.\\
&  \exp\,\left(  d_{1}+d_{2}+d_{3}\right)  \left(  X+Y\right)  +\frac{1}%
{2}\left(  d_{1}+d_{2}+d_{3}\right)  ^{2}\left[  X,Y\right]  +\\
&  \frac{1}{12}\left(  d_{1}+d_{2}+d_{3}\right)  ^{3}\left[  X-Y,\left[
X,Y\right]  \right]  +\\
&  d_{4}\left\{
\begin{array}
[c]{c}%
Y+\frac{1}{2}\left(  d_{1}+d_{2}+d_{3}\right)  \left[  X,Y\right]  +\frac
{1}{4}\left(  d_{1}+d_{2}+d_{3}\right)  ^{2}\left[  \left[  X,Y\right]
,Y\right]  -\\
\frac{1}{6}\left(  d_{1}+d_{2}+d_{3}\right)  ^{2}\left[  X+Y,\left[
X,Y\right]  \right]
\end{array}
\right\}  .\\
&  \exp\,d_{4}\left\{
\begin{array}
[c]{c}%
\frac{1}{24}\left(  d_{1}+d_{2}+d_{3}\right)  ^{3}\left[  \left[  X-Y,\left[
X,Y\right]  \right]  ,Y\right]  -\\
\frac{1}{12}\left(  d_{1}+d_{2}+d_{3}\right)  ^{3}\left[  X+Y,\left[  \left[
X,Y\right]  ,Y\right]  \right]  +\\
\frac{1}{2}\left(
\begin{array}
[c]{c}%
\frac{1}{2}\left(  d_{1}+d_{2}+d_{3}\right)  ^{2}\left[  X+Y,\left[
X,Y\right]  \right]  +\\
\frac{1}{4}\left(  d_{1}+d_{2}+d_{3}\right)  ^{3}\left[  X+Y,\left[  \left[
X,Y\right]  ,Y\right]  \right]  -\\
\frac{1}{6}\left(  d_{1}+d_{2}+d_{3}\right)  ^{3}\left[  X+Y,\left[
X+Y,\left[  X,Y\right]  \right]  \right]
\end{array}
\right)  -\\
\frac{1}{12}\left(  d_{1}+d_{2}+d_{3}\right)  ^{3}\left[  X+Y,\left[
X+Y,\left[  X,Y\right]  \right]  \right]
\end{array}
\right\} \\
&  \left)
\begin{array}
[c]{c}%
\text{By Theorem \ref{t5.2}\ with }\\
\delta^{\mathrm{left}}\left(  \exp\right)  \left(
\begin{array}
[c]{c}%
\left(  d_{1}+d_{2}+d_{3}\right)  \left(  X+Y\right)  +\\
\frac{1}{2}\left(  d_{1}+d_{2}+d_{3}\right)  ^{2}\left[  X,Y\right]  +\\
\frac{1}{12}\left(  d_{1}+d_{2}+d_{3}\right)  ^{3}\left[  X-Y,\left[
X,Y\right]  \right]  +\\
d_{4}Y
\end{array}
\right) \\
\left(
\begin{array}
[c]{c}%
\frac{1}{2}\left(  d_{1}+d_{2}+d_{3}\right)  \left[  X,Y\right]  +\\
\frac{1}{4}\left(  d_{1}+d_{2}+d_{3}\right)  ^{2}\left[  \left[  X,Y\right]
,Y\right]  -\\
\frac{1}{6}\left(  d_{1}+d_{2}+d_{3}\right)  ^{2}\left[  X+Y,\left[
X,Y\right]  \right]
\end{array}
\right) \\
=\frac{1}{2}\left(  d_{1}+d_{2}+d_{3}\right)  \left[  X,Y\right]  +\frac{1}%
{4}\left(  d_{1}+d_{2}+d_{3}\right)  ^{2}\left[  \left[  X,Y\right]
,Y\right]  -\\
\frac{1}{6}\left(  d_{1}+d_{2}+d_{3}\right)  ^{2}\left[  X+Y,\left[
X,Y\right]  \right]  -\\
\frac{1}{2}\left(
\begin{array}
[c]{c}%
\frac{1}{2}\left(  d_{1}+d_{2}+d_{3}\right)  ^{2}\left[  X+Y,\left[
X,Y\right]  \right]  +\\
\frac{1}{4}\left(  d_{1}+d_{2}+d_{3}\right)  ^{3}\left[  X+Y,\left[  \left[
X,Y\right]  ,Y\right]  \right]  -\\
\frac{1}{6}\left(  d_{1}+d_{2}+d_{3}\right)  ^{3}\left[  X+Y,\left[
X+Y,\left[  X,Y\right]  \right]  \right]
\end{array}
\right)  +\\
\frac{1}{12}\left(  d_{1}+d_{2}+d_{3}\right)  ^{3}\left[  X+Y,\left[
X+Y,\left[  X,Y\right]  \right]  \right]
\end{array}
\right(
\end{align*}
We keep on again.
\begin{align*}
&  =\exp\,d_{4}X.\\
&  \exp\,\left(  d_{1}+d_{2}+d_{3}\right)  \left(  X+Y\right)  +\frac{1}%
{2}\left(  d_{1}+d_{2}+d_{3}\right)  ^{2}\left[  X,Y\right]  +\\
&  \frac{1}{12}\left(  d_{1}+d_{2}+d_{3}\right)  ^{3}\left[  X-Y,\left[
X,Y\right]  \right]  +\\
&  d_{4}\left\{
\begin{array}
[c]{c}%
Y+\frac{1}{2}\left(  d_{1}+d_{2}+d_{3}\right)  \left[  X,Y\right]  +\frac
{1}{4}\left(  d_{1}+d_{2}+d_{3}\right)  ^{2}\left[  \left[  X,Y\right]
,Y\right]  -\\
\frac{1}{6}\left(  d_{1}+d_{2}+d_{3}\right)  ^{2}\left[  X+Y,\left[
X,Y\right]  \right]
\end{array}
\right\}  .\\
&  \exp\,d_{4}\left\{
\begin{array}
[c]{c}%
\frac{1}{4}\left(  d_{1}+d_{2}+d_{3}\right)  ^{2}\left[  X+Y,\left[
X,Y\right]  \right]  -\\
\frac{1}{8}\left(  d_{1}+d_{2}+d_{3}\right)  ^{3}\left[  X+Y,\left[
X+Y,\left[  X,Y\right]  \right]  \right]
\end{array}
\right\}  .\\
&  \exp\,d_{4}\left\{
\begin{array}
[c]{c}%
\frac{1}{24}\left(  d_{1}+d_{2}+d_{3}\right)  ^{3}\left[  \left[  X-Y,\left[
X,Y\right]  \right]  ,Y\right]  -\\
\frac{1}{12}\left(  d_{1}+d_{2}+d_{3}\right)  ^{3}\left[  X+Y,\left[  \left[
X,Y\right]  ,Y\right]  \right]  +\\
\frac{1}{2}\left(
\begin{array}
[c]{c}%
\frac{1}{4}\left(  d_{1}+d_{2}+d_{3}\right)  ^{3}\left[  X+Y,\left[  \left[
X,Y\right]  ,Y\right]  \right]  -\\
\frac{1}{6}\left(  d_{1}+d_{2}+d_{3}\right)  ^{3}\left[  X+Y,\left[
X+Y,\left[  X,Y\right]  \right]  \right]
\end{array}
\right)  -\\
\frac{1}{12}\left(  d_{1}+d_{2}+d_{3}\right)  ^{3}\left[  X+Y,\left[
X+Y,\left[  X,Y\right]  \right]  \right]  +\\
\frac{1}{8}\left(  d_{1}+d_{2}+d_{3}\right)  ^{3}\left[  X+Y,\left[
X+Y,\left[  X,Y\right]  \right]  \right]
\end{array}
\right\} \\
&  =\exp\,d_{4}X.\\
&  \exp\,\left(  d_{1}+d_{2}+d_{3}\right)  \left(  X+Y\right)  +\frac{1}%
{2}\left(  d_{1}+d_{2}+d_{3}\right)  ^{2}\left[  X,Y\right]  +\\
&  \frac{1}{12}\left(  d_{1}+d_{2}+d_{3}\right)  ^{3}\left[  X-Y,\left[
X,Y\right]  \right]  +\\
&  d_{4}\left\{
\begin{array}
[c]{c}%
Y+\frac{1}{2}\left(  d_{1}+d_{2}+d_{3}\right)  \left[  X,Y\right]  +\frac
{1}{4}\left(  d_{1}+d_{2}+d_{3}\right)  ^{2}\left[  \left[  X,Y\right]
,Y\right]  -\\
\frac{1}{6}\left(  d_{1}+d_{2}+d_{3}\right)  ^{2}\left[  X+Y,\left[
X,Y\right]  \right]  +\frac{1}{4}\left(  d_{1}+d_{2}+d_{3}\right)  ^{2}\left[
X+Y,\left[  X,Y\right]  \right]
\end{array}
\right\}  .\\
&  \exp\,d_{4}\left\{
\begin{array}
[c]{c}%
\frac{1}{24}\left(  d_{1}+d_{2}+d_{3}\right)  ^{3}\left[  \left[  X-Y,\left[
X,Y\right]  \right]  ,Y\right]  -\\
\frac{1}{12}\left(  d_{1}+d_{2}+d_{3}\right)  ^{3}\left[  X+Y,\left[  \left[
X,Y\right]  ,Y\right]  \right]  +\\
\frac{1}{2}\left(
\begin{array}
[c]{c}%
\frac{1}{4}\left(  d_{1}+d_{2}+d_{3}\right)  ^{3}\left[  X+Y,\left[  \left[
X,Y\right]  ,Y\right]  \right]  -\\
\frac{1}{6}\left(  d_{1}+d_{2}+d_{3}\right)  ^{3}\left[  X+Y,\left[
X+Y,\left[  X,Y\right]  \right]  \right]
\end{array}
\right)  -\\
\frac{1}{12}\left(  d_{1}+d_{2}+d_{3}\right)  ^{3}\left[  X+Y,\left[
X+Y,\left[  X,Y\right]  \right]  \right]  +\\
\frac{1}{8}\left(  d_{1}+d_{2}+d_{3}\right)  ^{3}\left[  X+Y,\left[
X+Y,\left[  X,Y\right]  \right]  \right]
\end{array}
\right\} \\
&  \left)
\begin{array}
[c]{c}%
\text{By Theorem \ref{t5.2}\ with }\\
\delta^{\mathrm{left}}\left(  \exp\right)  \left(
\begin{array}
[c]{c}%
\,\left(  d_{1}+d_{2}+d_{3}\right)  \left(  X+Y\right)  +\\
\frac{1}{2}\left(  d_{1}+d_{2}+d_{3}\right)  ^{2}\left[  X,Y\right]  +\\
\frac{1}{12}\left(  d_{1}+d_{2}+d_{3}\right)  ^{3}\left[  X-Y,\left[
X,Y\right]  \right]  +\\
d_{4}\left\{
\begin{array}
[c]{c}%
Y+\frac{1}{2}\left(  d_{1}+d_{2}+d_{3}\right)  \left[  X,Y\right]  +\\
\frac{1}{4}\left(  d_{1}+d_{2}+d_{3}\right)  ^{2}\left[  \left[  X,Y\right]
,Y\right]  -\\
\frac{1}{6}\left(  d_{1}+d_{2}+d_{3}\right)  ^{2}\left[  X+Y,\left[
X,Y\right]  \right]
\end{array}
\right\}
\end{array}
\right) \\
\left(  \frac{1}{4}\left(  d_{1}+d_{2}+d_{3}\right)  ^{2}\left[  X+Y,\left[
X,Y\right]  \right]  \right) \\
=\frac{1}{4}\left(  d_{1}+d_{2}+d_{3}\right)  ^{2}\left[  X+Y,\left[
X,Y\right]  \right]  -\\
\frac{1}{8}\left(  d_{1}+d_{2}+d_{3}\right)  ^{3}\left[  X+Y,\left[
X+Y,\left[  X,Y\right]  \right]  \right]
\end{array}
\right(
\end{align*}

We keep on once more.
\begin{align*}
&  =\exp\,d_{4}\left\{
\begin{array}
[c]{c}%
-\frac{1}{2}\left(
\begin{array}
[c]{c}%
\left(  d_{1}+d_{2}+d_{3}\right)  \left[  Y,X\right]  +\frac{1}{2}\left(
d_{1}+d_{2}+d_{3}\right)  ^{2}\left[  \left[  X,Y\right]  ,X\right]  +\\
\frac{1}{12}\left(  d_{1}+d_{2}+d_{3}\right)  ^{3}\left[  \left[  X-Y,\left[
X,Y\right]  \right]  ,X\right]
\end{array}
\right)  -\\
\frac{1}{6}\left(
\begin{array}
[c]{c}%
\left(  d_{1}+d_{2}+d_{3}\right)  ^{2}\left[  X+Y,\left[  Y,X\right]  \right]
+\\
\frac{1}{2}\left(  d_{1}+d_{2}+d_{3}\right)  ^{3}\left[  X+Y,\left[  \left[
X,Y\right]  ,X\right]  \right]
\end{array}
\right)
\end{array}
\right\}  .\\
&  \exp\,d_{4}\left\{
\begin{array}
[c]{c}%
X+\frac{1}{2}\left(
\begin{array}
[c]{c}%
\left(  d_{1}+d_{2}+d_{3}\right)  \left[  Y,X\right]  +\frac{1}{2}\left(
d_{1}+d_{2}+d_{3}\right)  ^{2}\left[  \left[  X,Y\right]  ,X\right]  +\\
\frac{1}{12}\left(  d_{1}+d_{2}+d_{3}\right)  ^{3}\left[  \left[  X-Y,\left[
X,Y\right]  \right]  ,X\right]
\end{array}
\right)  +\\
\frac{1}{6}\left(
\begin{array}
[c]{c}%
\left(  d_{1}+d_{2}+d_{3}\right)  ^{2}\left[  X+Y,\left[  Y,X\right]  \right]
+\\
\frac{1}{2}\left(  d_{1}+d_{2}+d_{3}\right)  ^{3}\left[  X+Y,\left[  \left[
X,Y\right]  ,X\right]  \right]
\end{array}
\right)
\end{array}
\right\}  .\\
&  \exp\,\left(  d_{1}+d_{2}+d_{3}\right)  \left(  X+Y\right)  +\frac{1}%
{2}\left(  d_{1}+d_{2}+d_{3}\right)  ^{2}\left[  X,Y\right]  +\\
&  \frac{1}{12}\left(  d_{1}+d_{2}+d_{3}\right)  ^{3}\left[  X-Y,\left[
X,Y\right]  \right]  +\\
&  d_{4}\left\{
\begin{array}
[c]{c}%
Y+\frac{1}{2}\left(  d_{1}+d_{2}+d_{3}\right)  \left[  X,Y\right]  +\frac
{1}{4}\left(  d_{1}+d_{2}+d_{3}\right)  ^{2}\left[  \left[  X,Y\right]
,Y\right]  -\\
\frac{1}{6}\left(  d_{1}+d_{2}+d_{3}\right)  ^{2}\left[  X+Y,\left[
X,Y\right]  \right]  +\frac{1}{4}\left(  d_{1}+d_{2}+d_{3}\right)  ^{2}\left[
X+Y,\left[  X,Y\right]  \right]
\end{array}
\right\}  .\\
&  \exp\,d_{4}\left\{
\begin{array}
[c]{c}%
\frac{1}{24}\left(  d_{1}+d_{2}+d_{3}\right)  ^{3}\left[  \left[  X-Y,\left[
X,Y\right]  \right]  ,Y\right]  -\\
\frac{1}{12}\left(  d_{1}+d_{2}+d_{3}\right)  ^{3}\left[  X+Y,\left[  \left[
X,Y\right]  ,Y\right]  \right]  +\\
\frac{1}{2}\left(
\begin{array}
[c]{c}%
\frac{1}{4}\left(  d_{1}+d_{2}+d_{3}\right)  ^{3}\left[  X+Y,\left[  \left[
X,Y\right]  ,Y\right]  \right]  -\\
\frac{1}{6}\left(  d_{1}+d_{2}+d_{3}\right)  ^{3}\left[  X+Y,\left[
X+Y,\left[  X,Y\right]  \right]  \right]
\end{array}
\right)  -\\
\frac{1}{12}\left(  d_{1}+d_{2}+d_{3}\right)  ^{3}\left[  X+Y,\left[
X+Y,\left[  X,Y\right]  \right]  \right]  +\\
\frac{1}{8}\left(  d_{1}+d_{2}+d_{3}\right)  ^{3}\left[  X+Y,\left[
X+Y,\left[  X,Y\right]  \right]  \right]
\end{array}
\right\} \\
&  =\exp\,d_{4}\left\{
\begin{array}
[c]{c}%
-\frac{1}{2}\left(
\begin{array}
[c]{c}%
\left(  d_{1}+d_{2}+d_{3}\right)  \left[  Y,X\right]  +\frac{1}{2}\left(
d_{1}+d_{2}+d_{3}\right)  ^{2}\left[  \left[  X,Y\right]  ,X\right]  +\\
\frac{1}{12}\left(  d_{1}+d_{2}+d_{3}\right)  ^{3}\left[  \left[  X-Y,\left[
X,Y\right]  \right]  ,X\right]
\end{array}
\right)  -\\
\frac{1}{6}\left(
\begin{array}
[c]{c}%
\left(  d_{1}+d_{2}+d_{3}\right)  ^{2}\left[  X+Y,\left[  Y,X\right]  \right]
+\\
\frac{1}{2}\left(  d_{1}+d_{2}+d_{3}\right)  ^{3}\left[  X+Y,\left[  \left[
X,Y\right]  ,X\right]  \right]
\end{array}
\right)
\end{array}
\right\}  .\\
&  \exp\,\left(  d_{1}+d_{2}+d_{3}\right)  \left(  X+Y\right)  +\frac{1}%
{2}\left(  d_{1}+d_{2}+d_{3}\right)  ^{2}\left[  X,Y\right]  +\\
&  \frac{1}{12}\left(  d_{1}+d_{2}+d_{3}\right)  ^{3}\left[  X-Y,\left[
X,Y\right]  \right]  +\\
&  d_{4}\left\{
\begin{array}
[c]{c}%
X+Y+\frac{1}{2}\left(  d_{1}+d_{2}+d_{3}\right)  \left[  X,Y\right]  +\frac
{1}{4}\left(  d_{1}+d_{2}+d_{3}\right)  ^{2}\left[  \left[  X,Y\right]
,Y\right]  -\\
\frac{1}{6}\left(  d_{1}+d_{2}+d_{3}\right)  ^{2}\left[  X+Y,\left[
X,Y\right]  \right]  +\\
\frac{1}{4}\left(  d_{1}+d_{2}+d_{3}\right)  ^{2}\left[  X+Y,\left[
X,Y\right]  \right]
\end{array}
\right\}  .\\
&  \exp\,d_{4}\left\{
\begin{array}
[c]{c}%
\frac{1}{24}\left(  d_{1}+d_{2}+d_{3}\right)  ^{3}\left[  \left[  X-Y,\left[
X,Y\right]  \right]  ,Y\right]  -\\
\frac{1}{12}\left(  d_{1}+d_{2}+d_{3}\right)  ^{3}\left[  X+Y,\left[  \left[
X,Y\right]  ,Y\right]  \right]  +\\
\frac{1}{2}\left(
\begin{array}
[c]{c}%
\frac{1}{4}\left(  d_{1}+d_{2}+d_{3}\right)  ^{3}\left[  X+Y,\left[  \left[
X,Y\right]  ,Y\right]  \right]  -\\
\frac{1}{6}\left(  d_{1}+d_{2}+d_{3}\right)  ^{3}\left[  X+Y,\left[
X+Y,\left[  X,Y\right]  \right]  \right]
\end{array}
\right)  -\\
\frac{1}{12}\left(  d_{1}+d_{2}+d_{3}\right)  ^{3}\left[  X+Y,\left[
X+Y,\left[  X,Y\right]  \right]  \right]  +\\
\frac{1}{8}\left(  d_{1}+d_{2}+d_{3}\right)  ^{3}\left[  X+Y,\left[
X+Y,\left[  X,Y\right]  \right]  \right]
\end{array}
\right\} \\
&  \left)
\begin{array}
[c]{c}%
\text{By Theorem \ref{t5.6}\ with }\\
\delta^{\mathrm{rightt}}\left(  \exp\right)  \left(
\begin{array}
[c]{c}%
\left(  d_{1}+d_{2}+d_{3}\right)  \left(  X+Y\right)  +\\
\frac{1}{2}\left(  d_{1}+d_{2}+d_{3}\right)  ^{2}\left[  X,Y\right]  +\\
\frac{1}{12}\left(  d_{1}+d_{2}+d_{3}\right)  ^{3}\left[  X-Y,\left[
X,Y\right]  \right]  +\\
d_{4}\left\{
\begin{array}
[c]{c}%
Y+\frac{1}{2}\left(  d_{1}+d_{2}+d_{3}\right)  \left[  X,Y\right]  +\\
\frac{1}{4}\left(  d_{1}+d_{2}+d_{3}\right)  ^{2}\left[  \left[  X,Y\right]
,Y\right]  -\\
\frac{1}{6}\left(  d_{1}+d_{2}+d_{3}\right)  ^{2}\left[  X+Y,\left[
X,Y\right]  \right]  +\\
\frac{1}{4}\left(  d_{1}+d_{2}+d_{3}\right)  ^{2}\left[  X+Y,\left[
X,Y\right]  \right]
\end{array}
\right\}
\end{array}
\right)  \left(  X\right) \\
=X+\frac{1}{2}\left(
\begin{array}
[c]{c}%
\left(  d_{1}+d_{2}+d_{3}\right)  \left[  Y,X\right]  +\frac{1}{2}\left(
d_{1}+d_{2}+d_{3}\right)  ^{2}\left[  \left[  X,Y\right]  ,X\right]  +\\
\frac{1}{12}\left(  d_{1}+d_{2}+d_{3}\right)  ^{3}\left[  \left[  X-Y,\left[
X,Y\right]  \right]  ,X\right]
\end{array}
\right)  +\\
\frac{1}{6}\left(
\begin{array}
[c]{c}%
\left(  d_{1}+d_{2}+d_{3}\right)  ^{2}\left[  X+Y,\left[  Y,X\right]  \right]
+\\
\frac{1}{2}\left(  d_{1}+d_{2}+d_{3}\right)  ^{3}\left[  X+Y,\left[  \left[
X,Y\right]  ,X\right]  \right]
\end{array}
\right)
\end{array}
\right(
\end{align*}
We keep on once more.
\begin{align*}
&  =\exp\,d_{4}\left\{
\begin{array}
[c]{c}%
-\frac{1}{24}\left(  d_{1}+d_{2}+d_{3}\right)  ^{3}\left[  \left[  X-Y,\left[
X,Y\right]  \right]  ,X\right]  -\\
\frac{1}{12}\left(  d_{1}+d_{2}+d_{3}\right)  ^{3}\left[  X+Y,\left[  \left[
X,Y\right]  ,X\right]  \right]  -\\
\frac{1}{2}\left(
\begin{array}
[c]{c}%
-\frac{1}{2}\left(  d_{1}+d_{2}+d_{3}\right)  ^{2}\left[  X+Y,\left[
Y,X\right]  \right]  -\\
\frac{1}{4}\left(  d_{1}+d_{2}+d_{3}\right)  ^{3}\left[  X+Y,\left[  \left[
X,Y\right]  ,X\right]  \right]  -\\
\frac{1}{6}\left(  d_{1}+d_{2}+d_{3}\right)  ^{3}\left[  X+Y,\left[
X+Y,\left[  Y,X\right]  \right]  \right]
\end{array}
\right)  +\\
\frac{1}{12}\left(  d_{1}+d_{2}+d_{3}\right)  ^{3}\left[  X+Y,\left[
X+Y,\left[  Y,X\right]  \right]  \right]
\end{array}
\right\}  .\\
&  \exp\,d_{4}\left\{
\begin{array}
[c]{c}%
-\frac{1}{2}\left(  d_{1}+d_{2}+d_{3}\right)  \left[  Y,X\right]  -\frac{1}%
{4}\left(  d_{1}+d_{2}+d_{3}\right)  ^{2}\left[  \left[  X,Y\right]
,X\right]  -\\
\frac{1}{6}\left(  d_{1}+d_{2}+d_{3}\right)  ^{2}\left[  X+Y,\left[
Y,X\right]  \right]  +\\
\frac{1}{2}\left(
\begin{array}
[c]{c}%
-\frac{1}{2}\left(  d_{1}+d_{2}+d_{3}\right)  ^{2}\left[  X+Y,\left[
Y,X\right]  \right]  -\\
\frac{1}{4}\left(  d_{1}+d_{2}+d_{3}\right)  ^{3}\left[  X+Y,\left[  \left[
X,Y\right]  ,X\right]  \right]  -\\
\frac{1}{6}\left(  d_{1}+d_{2}+d_{3}\right)  ^{3}\left[  X+Y,\left[
X+Y,\left[  Y,X\right]  \right]  \right]
\end{array}
\right)  -\\
\frac{1}{12}\left(  d_{1}+d_{2}+d_{3}\right)  ^{3}\left[  X+Y,\left[
X+Y,\left[  Y,X\right]  \right]  \right]
\end{array}
\right\}  .\\
&  \exp\,\left(  d_{1}+d_{2}+d_{3}\right)  \left(  X+Y\right)  +\frac{1}%
{2}\left(  d_{1}+d_{2}+d_{3}\right)  ^{2}\left[  X,Y\right]  +\\
&  \frac{1}{12}\left(  d_{1}+d_{2}+d_{3}\right)  ^{3}\left[  X-Y,\left[
X,Y\right]  \right]  +\\
&  d_{4}\left\{
\begin{array}
[c]{c}%
X+Y+\frac{1}{2}\left(  d_{1}+d_{2}+d_{3}\right)  \left[  X,Y\right]  +\frac
{1}{4}\left(  d_{1}+d_{2}+d_{3}\right)  ^{2}\left[  \left[  X,Y\right]
,Y\right]  -\\
\frac{1}{6}\left(  d_{1}+d_{2}+d_{3}\right)  ^{2}\left[  X+Y,\left[
X,Y\right]  \right]  +\\
\frac{1}{4}\left(  d_{1}+d_{2}+d_{3}\right)  ^{2}\left[  X+Y,\left[
X,Y\right]  \right]
\end{array}
\right\}  .\\
&  \exp\,d_{4}\left\{
\begin{array}
[c]{c}%
\frac{1}{24}\left(  d_{1}+d_{2}+d_{3}\right)  ^{3}\left[  \left[  X-Y,\left[
X,Y\right]  \right]  ,Y\right]  -\\
\frac{1}{12}\left(  d_{1}+d_{2}+d_{3}\right)  ^{3}\left[  X+Y,\left[  \left[
X,Y\right]  ,Y\right]  \right]  +\\
\frac{1}{2}\left(
\begin{array}
[c]{c}%
\frac{1}{4}\left(  d_{1}+d_{2}+d_{3}\right)  ^{3}\left[  X+Y,\left[  \left[
X,Y\right]  ,Y\right]  \right]  -\\
\frac{1}{6}\left(  d_{1}+d_{2}+d_{3}\right)  ^{3}\left[  X+Y,\left[
X+Y,\left[  X,Y\right]  \right]  \right]
\end{array}
\right)  -\\
\frac{1}{12}\left(  d_{1}+d_{2}+d_{3}\right)  ^{3}\left[  X+Y,\left[
X+Y,\left[  X,Y\right]  \right]  \right]  +\\
\frac{1}{8}\left(  d_{1}+d_{2}+d_{3}\right)  ^{3}\left[  X+Y,\left[
X+Y,\left[  X,Y\right]  \right]  \right]
\end{array}
\right\}
\end{align*}
We keep on once more.
\begin{align*}
&  =\exp\,d_{4}\left\{
\begin{array}
[c]{c}%
-\frac{1}{24}\left(  d_{1}+d_{2}+d_{3}\right)  ^{3}\left[  \left[  X-Y,\left[
X,Y\right]  \right]  ,X\right]  -\\
\frac{1}{12}\left(  d_{1}+d_{2}+d_{3}\right)  ^{3}\left[  X+Y,\left[  \left[
X,Y\right]  ,X\right]  \right]  -\\
\frac{1}{2}\left(
\begin{array}
[c]{c}%
-\frac{1}{2}\left(  d_{1}+d_{2}+d_{3}\right)  ^{2}\left[  X+Y,\left[
Y,X\right]  \right]  -\\
\frac{1}{4}\left(  d_{1}+d_{2}+d_{3}\right)  ^{3}\left[  X+Y,\left[  \left[
X,Y\right]  ,X\right]  \right]  -\\
\frac{1}{6}\left(  d_{1}+d_{2}+d_{3}\right)  ^{3}\left[  X+Y,\left[
X+Y,\left[  Y,X\right]  \right]  \right]
\end{array}
\right)  +\\
\frac{1}{12}\left(  d_{1}+d_{2}+d_{3}\right)  ^{3}\left[  X+Y,\left[
X+Y,\left[  Y,X\right]  \right]  \right]
\end{array}
\right\}  .\\
&  \exp\,\left(  d_{1}+d_{2}+d_{3}\right)  \left(  X+Y\right)  +\frac{1}%
{2}\left(  d_{1}+d_{2}+d_{3}\right)  ^{2}\left[  X,Y\right]  +\\
&  \frac{1}{12}\left(  d_{1}+d_{2}+d_{3}\right)  ^{3}\left[  X-Y,\left[
X,Y\right]  \right]  +\\
&  d_{4}\left\{
\begin{array}
[c]{c}%
X+Y+\frac{1}{2}\left(  d_{1}+d_{2}+d_{3}\right)  \left[  X,Y\right]  +\frac
{1}{4}\left(  d_{1}+d_{2}+d_{3}\right)  ^{2}\left[  \left[  X,Y\right]
,Y\right]  -\\
\frac{1}{6}\left(  d_{1}+d_{2}+d_{3}\right)  ^{2}\left[  X+Y,\left[
X,Y\right]  \right]  +\\
\frac{1}{4}\left(  d_{1}+d_{2}+d_{3}\right)  ^{2}\left[  X+Y,\left[
X,Y\right]  \right]  -\\
\frac{1}{2}\left(  d_{1}+d_{2}+d_{3}\right)  \left[  Y,X\right]  -\frac{1}%
{4}\left(  d_{1}+d_{2}+d_{3}\right)  ^{2}\left[  \left[  X,Y\right]
,X\right]  -\\
\frac{1}{6}\left(  d_{1}+d_{2}+d_{3}\right)  ^{2}\left[  X+Y,\left[
Y,X\right]  \right]
\end{array}
\right\}  .\\
&  \exp\,d_{4}\left\{
\begin{array}
[c]{c}%
\frac{1}{24}\left(  d_{1}+d_{2}+d_{3}\right)  ^{3}\left[  \left[  X-Y,\left[
X,Y\right]  \right]  ,Y\right]  -\\
\frac{1}{12}\left(  d_{1}+d_{2}+d_{3}\right)  ^{3}\left[  X+Y,\left[  \left[
X,Y\right]  ,Y\right]  \right]  +\\
\frac{1}{2}\left(
\begin{array}
[c]{c}%
\frac{1}{4}\left(  d_{1}+d_{2}+d_{3}\right)  ^{3}\left[  X+Y,\left[  \left[
X,Y\right]  ,Y\right]  \right]  -\\
\frac{1}{6}\left(  d_{1}+d_{2}+d_{3}\right)  ^{3}\left[  X+Y,\left[
X+Y,\left[  X,Y\right]  \right]  \right]
\end{array}
\right)  -\\
\frac{1}{12}\left(  d_{1}+d_{2}+d_{3}\right)  ^{3}\left[  X+Y,\left[
X+Y,\left[  X,Y\right]  \right]  \right]  +\\
\frac{1}{8}\left(  d_{1}+d_{2}+d_{3}\right)  ^{3}\left[  X+Y,\left[
X+Y,\left[  X,Y\right]  \right]  \right]
\end{array}
\right\} \\
&  \left)
\begin{array}
[c]{c}%
\text{By Theorem \ref{t5.6}\ with }\\
\delta^{\mathrm{rightt}}\left(  \exp\right)  \left(
\begin{array}
[c]{c}%
\left(  d_{1}+d_{2}+d_{3}\right)  \left(  X+Y\right)  +\\
\frac{1}{2}\left(  d_{1}+d_{2}+d_{3}\right)  ^{2}\left[  X,Y\right]  +\\
\frac{1}{12}\left(  d_{1}+d_{2}+d_{3}\right)  ^{3}\left[  X-Y,\left[
X,Y\right]  \right]  +\\
d_{4}\left\{
\begin{array}
[c]{c}%
X+Y+\frac{1}{2}\left(  d_{1}+d_{2}+d_{3}\right)  \left[  X,Y\right]  +\\
\frac{1}{4}\left(  d_{1}+d_{2}+d_{3}\right)  ^{2}\left[  \left[  X,Y\right]
,Y\right]  -\\
\frac{1}{6}\left(  d_{1}+d_{2}+d_{3}\right)  ^{2}\left[  X+Y,\left[
X,Y\right]  \right]  +\\
\frac{1}{4}\left(  d_{1}+d_{2}+d_{3}\right)  ^{2}\left[  X+Y,\left[
X,Y\right]  \right]
\end{array}
\right\}
\end{array}
\right) \\
\left(
\begin{array}
[c]{c}%
-\frac{1}{2}\left(  d_{1}+d_{2}+d_{3}\right)  \left[  Y,X\right]  -\frac{1}%
{4}\left(  d_{1}+d_{2}+d_{3}\right)  ^{2}\left[  \left[  X,Y\right]
,X\right]  -\\
\frac{1}{6}\left(  d_{1}+d_{2}+d_{3}\right)  ^{2}\left[  X+Y,\left[
Y,X\right]  \right]
\end{array}
\right) \\
=-\frac{1}{2}\left(  d_{1}+d_{2}+d_{3}\right)  \left[  Y,X\right]  -\frac
{1}{4}\left(  d_{1}+d_{2}+d_{3}\right)  ^{2}\left[  \left[  X,Y\right]
,X\right]  -\\
\frac{1}{6}\left(  d_{1}+d_{2}+d_{3}\right)  ^{2}\left[  X+Y,\left[
Y,X\right]  \right]  +\\
\frac{1}{2}\left(
\begin{array}
[c]{c}%
-\frac{1}{2}\left(  d_{1}+d_{2}+d_{3}\right)  ^{2}\left[  X+Y,\left[
Y,X\right]  \right]  -\\
\frac{1}{4}\left(  d_{1}+d_{2}+d_{3}\right)  ^{3}\left[  X+Y,\left[  \left[
X,Y\right]  ,X\right]  \right]  -\\
\frac{1}{6}\left(  d_{1}+d_{2}+d_{3}\right)  ^{3}\left[  X+Y,\left[
X+Y,\left[  Y,X\right]  \right]  \right]
\end{array}
\right)  -\\
\frac{1}{12}\left(  d_{1}+d_{2}+d_{3}\right)  ^{3}\left[  X+Y,\left[
X+Y,\left[  Y,X\right]  \right]  \right]
\end{array}
\right(
\end{align*}
We keep on once more.
\begin{align*}
&  =\exp\,d_{4}\left\{
\begin{array}
[c]{c}%
-\frac{1}{24}\left(  d_{1}+d_{2}+d_{3}\right)  ^{3}\left[  \left[  X-Y,\left[
X,Y\right]  \right]  ,X\right]  -\\
\frac{1}{12}\left(  d_{1}+d_{2}+d_{3}\right)  ^{3}\left[  X+Y,\left[  \left[
X,Y\right]  ,X\right]  \right]  -\\
\frac{1}{2}\left(
\begin{array}
[c]{c}%
-\frac{1}{4}\left(  d_{1}+d_{2}+d_{3}\right)  ^{3}\left[  X+Y,\left[  \left[
X,Y\right]  ,X\right]  \right]  -\\
\frac{1}{6}\left(  d_{1}+d_{2}+d_{3}\right)  ^{3}\left[  X+Y,\left[
X+Y,\left[  Y,X\right]  \right]  \right]
\end{array}
\right)  +\\
\frac{1}{12}\left(  d_{1}+d_{2}+d_{3}\right)  ^{3}\left[  X+Y,\left[
X+Y,\left[  Y,X\right]  \right]  \right]  -\\
\frac{1}{8}\left(  d_{1}+d_{2}+d_{3}\right)  ^{3}\left[  X+Y,\left[
X+Y,\left[  Y,X\right]  \right]  \right]
\end{array}
\right\}  .\\
&  \exp\,d_{4}\left\{
\begin{array}
[c]{c}%
\frac{1}{4}\left(  d_{1}+d_{2}+d_{3}\right)  ^{2}\left[  X+Y,\left[
Y,X\right]  \right]  +\\
\frac{1}{8}\left(  d_{1}+d_{2}+d_{3}\right)  ^{3}\left[  X+Y,\left[
X+Y,\left[  Y,X\right]  \right]  \right]
\end{array}
\right\}  .\\
&  \exp\,\left(  d_{1}+d_{2}+d_{3}\right)  \left(  X+Y\right)  +\frac{1}%
{2}\left(  d_{1}+d_{2}+d_{3}\right)  ^{2}\left[  X,Y\right]  +\\
&  \frac{1}{12}\left(  d_{1}+d_{2}+d_{3}\right)  ^{3}\left[  X-Y,\left[
X,Y\right]  \right]  +\\
&  d_{4}\left\{
\begin{array}
[c]{c}%
X+Y+\frac{1}{2}\left(  d_{1}+d_{2}+d_{3}\right)  \left[  X,Y\right]  +\frac
{1}{4}\left(  d_{1}+d_{2}+d_{3}\right)  ^{2}\left[  \left[  X,Y\right]
,Y\right]  -\\
\frac{1}{6}\left(  d_{1}+d_{2}+d_{3}\right)  ^{2}\left[  X+Y,\left[
X,Y\right]  \right]  +\\
\frac{1}{4}\left(  d_{1}+d_{2}+d_{3}\right)  ^{2}\left[  X+Y,\left[
X,Y\right]  \right]  -\\
\frac{1}{2}\left(  d_{1}+d_{2}+d_{3}\right)  \left[  Y,X\right]  -\frac{1}%
{4}\left(  d_{1}+d_{2}+d_{3}\right)  ^{2}\left[  \left[  X,Y\right]
,X\right]  -\\
\frac{1}{6}\left(  d_{1}+d_{2}+d_{3}\right)  ^{2}\left[  X+Y,\left[
Y,X\right]  \right]
\end{array}
\right\}  .\\
&  \exp\,d_{4}\left\{
\begin{array}
[c]{c}%
\frac{1}{24}\left(  d_{1}+d_{2}+d_{3}\right)  ^{3}\left[  \left[  X-Y,\left[
X,Y\right]  \right]  ,Y\right]  -\\
\frac{1}{12}\left(  d_{1}+d_{2}+d_{3}\right)  ^{3}\left[  X+Y,\left[  \left[
X,Y\right]  ,Y\right]  \right]  +\\
\frac{1}{2}\left(
\begin{array}
[c]{c}%
\frac{1}{4}\left(  d_{1}+d_{2}+d_{3}\right)  ^{3}\left[  X+Y,\left[  \left[
X,Y\right]  ,Y\right]  \right]  -\\
\frac{1}{6}\left(  d_{1}+d_{2}+d_{3}\right)  ^{3}\left[  X+Y,\left[
X+Y,\left[  X,Y\right]  \right]  \right]
\end{array}
\right)  -\\
\frac{1}{12}\left(  d_{1}+d_{2}+d_{3}\right)  ^{3}\left[  X+Y,\left[
X+Y,\left[  X,Y\right]  \right]  \right]  +\\
\frac{1}{8}\left(  d_{1}+d_{2}+d_{3}\right)  ^{3}\left[  X+Y,\left[
X+Y,\left[  X,Y\right]  \right]  \right]
\end{array}
\right\}
\end{align*}
We keep on once more.
\begin{align*}
&  =\exp\,d_{4}\left\{
\begin{array}
[c]{c}%
-\frac{1}{24}\left(  d_{1}+d_{2}+d_{3}\right)  ^{3}\left[  \left[  X-Y,\left[
X,Y\right]  \right]  ,X\right]  -\\
\frac{1}{12}\left(  d_{1}+d_{2}+d_{3}\right)  ^{3}\left[  X+Y,\left[  \left[
X,Y\right]  ,X\right]  \right]  -\\
\frac{1}{2}\left(
\begin{array}
[c]{c}%
-\frac{1}{4}\left(  d_{1}+d_{2}+d_{3}\right)  ^{3}\left[  X+Y,\left[  \left[
X,Y\right]  ,X\right]  \right]  -\\
\frac{1}{6}\left(  d_{1}+d_{2}+d_{3}\right)  ^{3}\left[  X+Y,\left[
X+Y,\left[  Y,X\right]  \right]  \right]
\end{array}
\right)  +\\
\frac{1}{12}\left(  d_{1}+d_{2}+d_{3}\right)  ^{3}\left[  X+Y,\left[
X+Y,\left[  Y,X\right]  \right]  \right]  -\\
\frac{1}{8}\left(  d_{1}+d_{2}+d_{3}\right)  ^{3}\left[  X+Y,\left[
X+Y,\left[  Y,X\right]  \right]  \right]
\end{array}
\right\}  .\\
&  \exp\,\left(  d_{1}+d_{2}+d_{3}\right)  \left(  X+Y\right)  +\frac{1}%
{2}\left(  d_{1}+d_{2}+d_{3}\right)  ^{2}\left[  X,Y\right]  +\\
&  \frac{1}{12}\left(  d_{1}+d_{2}+d_{3}\right)  ^{3}\left[  X-Y,\left[
X,Y\right]  \right]  +\\
&  d_{4}\left\{
\begin{array}
[c]{c}%
X+Y+\frac{1}{2}\left(  d_{1}+d_{2}+d_{3}\right)  \left[  X,Y\right]  +\frac
{1}{4}\left(  d_{1}+d_{2}+d_{3}\right)  ^{2}\left[  \left[  X,Y\right]
,Y\right]  -\\
\frac{1}{6}\left(  d_{1}+d_{2}+d_{3}\right)  ^{2}\left[  X+Y,\left[
X,Y\right]  \right]  +\\
\frac{1}{4}\left(  d_{1}+d_{2}+d_{3}\right)  ^{2}\left[  X+Y,\left[
X,Y\right]  \right]  -\\
\frac{1}{2}\left(  d_{1}+d_{2}+d_{3}\right)  \left[  Y,X\right]  -\frac{1}%
{4}\left(  d_{1}+d_{2}+d_{3}\right)  ^{2}\left[  \left[  X,Y\right]
,X\right]  -\\
\frac{1}{6}\left(  d_{1}+d_{2}+d_{3}\right)  ^{2}\left[  X+Y,\left[
Y,X\right]  \right]  +\\
\frac{1}{4}\left(  d_{1}+d_{2}+d_{3}\right)  ^{2}\left[  X+Y,\left[
Y,X\right]  \right]
\end{array}
\right\}  .\\
&  \exp\,d_{4}\left\{
\begin{array}
[c]{c}%
\frac{1}{24}\left(  d_{1}+d_{2}+d_{3}\right)  ^{3}\left[  \left[  X-Y,\left[
X,Y\right]  \right]  ,Y\right]  -\\
\frac{1}{12}\left(  d_{1}+d_{2}+d_{3}\right)  ^{3}\left[  X+Y,\left[  \left[
X,Y\right]  ,Y\right]  \right]  +\\
\frac{1}{2}\left(
\begin{array}
[c]{c}%
\frac{1}{4}\left(  d_{1}+d_{2}+d_{3}\right)  ^{3}\left[  X+Y,\left[  \left[
X,Y\right]  ,Y\right]  \right]  -\\
\frac{1}{6}\left(  d_{1}+d_{2}+d_{3}\right)  ^{3}\left[  X+Y,\left[
X+Y,\left[  X,Y\right]  \right]  \right]
\end{array}
\right)  -\\
\frac{1}{12}\left(  d_{1}+d_{2}+d_{3}\right)  ^{3}\left[  X+Y,\left[
X+Y,\left[  X,Y\right]  \right]  \right]  +\\
\frac{1}{8}\left(  d_{1}+d_{2}+d_{3}\right)  ^{3}\left[  X+Y,\left[
X+Y,\left[  X,Y\right]  \right]  \right]
\end{array}
\right\} \\
&  \left)
\begin{array}
[c]{c}%
\text{By Theorem \ref{t5.6}\ with }\\
\delta^{\mathrm{rightt}}\left(  \exp\right)  \left(
\begin{array}
[c]{c}%
\left(  d_{1}+d_{2}+d_{3}\right)  \left(  X+Y\right)  +\\
\frac{1}{2}\left(  d_{1}+d_{2}+d_{3}\right)  ^{2}\left[  X,Y\right]  +\\
\frac{1}{12}\left(  d_{1}+d_{2}+d_{3}\right)  ^{3}\left[  X-Y,\left[
X,Y\right]  \right]  +\\
d_{4}\left\{
\begin{array}
[c]{c}%
X+Y+\frac{1}{2}\left(  d_{1}+d_{2}+d_{3}\right)  \left[  X,Y\right]  +\\
\frac{1}{4}\left(  d_{1}+d_{2}+d_{3}\right)  ^{2}\left[  \left[  X,Y\right]
,Y\right]  -\\
\frac{1}{6}\left(  d_{1}+d_{2}+d_{3}\right)  ^{2}\left[  X+Y,\left[
X,Y\right]  \right]  +\\
\frac{1}{4}\left(  d_{1}+d_{2}+d_{3}\right)  ^{2}\left[  X+Y,\left[
X,Y\right]  \right]  -\\
\frac{1}{2}\left(  d_{1}+d_{2}+d_{3}\right)  \left[  Y,X\right]  -\\
\frac{1}{4}\left(  d_{1}+d_{2}+d_{3}\right)  ^{2}\left[  \left[  X,Y\right]
,X\right]  -\\
\frac{1}{6}\left(  d_{1}+d_{2}+d_{3}\right)  ^{2}\left[  X+Y,\left[
Y,X\right]  \right]
\end{array}
\right\}
\end{array}
\right) \\
\left(  \frac{1}{4}\left(  d_{1}+d_{2}+d_{3}\right)  ^{2}\left[  X+Y,\left[
Y,X\right]  \right]  \right) \\
=\frac{1}{4}\left(  d_{1}+d_{2}+d_{3}\right)  ^{2}\left[  X+Y,\left[
Y,X\right]  \right]  +\\
\frac{1}{8}\left(  d_{1}+d_{2}+d_{3}\right)  ^{3}\left[  X+Y,\left[
X+Y,\left[  Y,X\right]  \right]  \right]
\end{array}
\right(
\end{align*}

We keep on once more.
\begin{align*}
&  =\exp\,\left(  d_{1}+d_{2}+d_{3}\right)  \left(  X+Y\right)  +\frac{1}%
{2}\left(  d_{1}+d_{2}+d_{3}\right)  ^{2}\left[  X,Y\right]  +\\
&  \frac{1}{12}\left(  d_{1}+d_{2}+d_{3}\right)  ^{3}\left[  X-Y,\left[
X,Y\right]  \right]  +\\
&  d_{4}\left\{  X+Y+\left(  d_{1}+d_{2}+d_{3}\right)  \left[  X,Y\right]
+\frac{1}{4}\left(  d_{1}+d_{2}+d_{3}\right)  ^{2}\left[  X-Y,\left[
X,Y\right]  \right]  \right\}  .\\
&  \exp\,d_{4}\left(  d_{1}+d_{2}+d_{3}\right)  ^{3}\left\{
\begin{array}
[c]{c}%
\frac{1}{24}\left[  X,\left[  X-Y,\left[  X,Y\right]  \right]  \right]  +\\
\left(  \frac{1}{12}-\frac{1}{8}\right)  \left[  X+Y,\left[  X,\left[
X,Y\right]  \right]  \right]  +\\
\left(  \frac{1}{12}+\frac{1}{12}-\frac{1}{8}\right)  \left[  X+Y,\left[
X+Y,\left[  Y,X\right]  \right]  \right]  -\\
\frac{1}{24}\left[  Y,\left[  X-Y,\left[  X,Y\right]  \right]  \right]  +\\
\left(  \frac{1}{12}-\frac{1}{8}\right)  \left[  X+Y,\left[  Y,\left[
X,Y\right]  \right]  \right]  -\\
\left(  \frac{1}{12}+\frac{1}{12}-\frac{1}{8}\right)  \left[  X+Y,\left[
X+Y,\left[  X,Y\right]  \right]  \right]
\end{array}
\right\} \\
&  =\exp\,\left(  d_{1}+d_{2}+d_{3}+d_{4}\right)  \left(  X+Y\right)
+\frac{1}{2}\left(  d_{1}+d_{2}+d_{3}+d_{4}\right)  ^{2}\left[  X,Y\right]
+\\
&  \frac{1}{12}\left(  d_{1}+d_{2}+d_{3}+d_{4}\right)  ^{3}\left[  X-Y,\left[
X,Y\right]  \right]  .\\
&  \exp\,d_{4}\left(  d_{1}+d_{2}+d_{3}\right)  ^{3}\left\{
\begin{array}
[c]{c}%
-\frac{1}{24}\left(  \left[  X,\left[  Y,\left[  X,Y\right]  \right]  \right]
+\left[  Y,\left[  X,\left[  X,Y\right]  \right]  \right]  \right)  +\\
\frac{1}{24}\left[  X+Y,\left[  X+Y,\left[  Y,X\right]  \right]  \right]  -\\
-\frac{1}{24}\left(  \left[  X,\left[  Y,\left[  X,Y\right]  \right]  \right]
+\left[  Y,\left[  X,\left[  X,Y\right]  \right]  \right]  \right)  -\\
\frac{1}{24}\left[  X+Y,\left[  X+Y,\left[  X,Y\right]  \right]  \right]
\end{array}
\right\} \\
&  =\exp\,\left(  d_{1}+d_{2}+d_{3}+d_{4}\right)  \left(  X+Y\right)
+\frac{1}{2}\left(  d_{1}+d_{2}+d_{3}+d_{4}\right)  ^{2}\left[  X,Y\right]
+\\
&  \frac{1}{12}\left(  d_{1}+d_{2}+d_{3}+d_{4}\right)  ^{3}\left[  X-Y,\left[
X,Y\right]  \right]  .\\
&  \exp\,-\frac{1}{12}d_{4}\left(  d_{1}+d_{2}+d_{3}\right)  ^{3}\left(
\begin{array}
[c]{c}%
\left[  X,\left[  Y,\left[  X,Y\right]  \right]  \right]  +\left[  Y,\left[
X,\left[  X,Y\right]  \right]  \right]  +\\
\left[  X+Y,\left[  X+Y,\left[  X,Y\right]  \right]  \right]
\end{array}
\right) \\
&  =\exp\,\left(  d_{1}+d_{2}+d_{3}+d_{4}\right)  \left(  X+Y\right)
+\frac{1}{2}\left(  d_{1}+d_{2}+d_{3}+d_{4}\right)  ^{2}\left[  X,Y\right]
+\\
&  \frac{1}{12}\left(  d_{1}+d_{2}+d_{3}+d_{4}\right)  ^{3}\left[  X-Y,\left[
X,Y\right]  \right]  -\\
&  \frac{1}{48}\left(  d_{1}+d_{2}+d_{3}+d_{4}\right)  ^{4}\left(
\begin{array}
[c]{c}%
\left[  X,\left[  Y,\left[  X,Y\right]  \right]  \right]  +\left[  Y,\left[
X,\left[  X,Y\right]  \right]  \right]  +\\
\left[  X+Y,\left[  X+Y,\left[  X,Y\right]  \right]  \right]
\end{array}
\right)
\end{align*}

\end{proof}

\end{document}